\documentclass[a4paper]{amsart}

\usepackage[english]{babel}

\usepackage{amssymb}
\usepackage{mathrsfs}
\usepackage{hyperref}
\usepackage{xcolor}
\usepackage{eucal}

\usepackage{amssymb}
\usepackage{amsmath}
\usepackage{amsthm}
\usepackage{imakeidx}
\usepackage{breqn}

\usepackage{diagbox}

\usepackage{verbatim}

\usepackage{fancyhdr}

\usepackage{tikz-cd}

\usepackage[utf8]{inputenc}

\usepackage{graphicx}

\usepackage{xcolor}

\usepackage[style=alphabetic, url=false, doi=false, backend=bibtex]{biblatex}

\theoremstyle{definition}

\newtheorem{mydef}{Definition}[section]

\newtheorem*{myass}{Assumption}
\newtheorem{myque}[mydef]{Question}
\newtheorem*{myack}{Acknowledgments}

\theoremstyle{remark}

\newtheorem{mybem}[mydef]{Remark}
\newtheorem{myex}[mydef]{Example}

\theoremstyle{plain}

\newtheorem{mysen}[mydef]{Theorem}
\newtheorem{mylem}[mydef]{Lemma}
\newtheorem{mypro}[mydef]{Proposition}
\newtheorem{myfact}[mydef]{Fact}
\newtheorem{myclaim}{Claim}
\newtheorem*{mysenx}{Theorem}

\newtheorem*{mysclaim}{Subclaim}

\numberwithin{mydef}{section}


\DeclareMathOperator{\cof}{cof}
\DeclareMathOperator{\dom}{dom}

\DeclareMathOperator{\im}{im}

\DeclareMathOperator{\cf}{cf}

\DeclareMathOperator{\Coll}{Coll}

\DeclareMathOperator{\stem}{stem}
\DeclareMathOperator{\osucc}{osucc}


\newcommand{\dC}{\mathbb{C}}

\newcommand{\dG}{\mathbb{G}}

\newcommand{\dI}{\mathbb{I}}

\newcommand{\dL}{\mathbb{L}}

\newcommand{\dP}{\mathbb{P}}
\newcommand{\dQ}{\mathbb{Q}}

\newcommand{\uhr}{\upharpoonright}

\newcommand{\ZFC}{\mathsf{ZFC}}
\newcommand{\GCH}{\mathsf{GCH}}
\newcommand{\PCF}{\mathsf{PCF}}

\newcommand{\LIP}{\mathsf{LIP}}


\title[Failure of Approachability at $\aleph_{\omega+1}$ for any Cofinality]{Failure of Approachability at the Successor of the first Singular for any Cofinality} 

\author{Hannes Jakob and Maxwell Levine} 

\subjclass[2020]{Primary: 03E05, Secondary: 03E04, 03E35, 03E55} 

\addbibresource{bibliography} 

\date{\today}

\begin{document}
	
	
	\keywords{} 
	
	
	\begin{abstract}
		We solve two long-standing open problems regarding the combinatorics of $\aleph_{\omega+1}$. We answer a question of Shelah by showing that it is consistent for any $n\geq 1$ that $\mathsf{GCH}$ holds and there is a stationary set of points of cofinality $\aleph_n$ which is not in the approachability ideal. As a corollary, we obtain a model where the notions of goodness and approachability are distinct for stationarily many points of cofinality $\aleph_1$, answering an open question of Cummings, Foreman, and Magidor.
	\end{abstract}
	
	\maketitle
	
	The concept of independence is fundamental to set-theoretic research. It refers to the phenomenon that many statements are neither proven nor refuted by the axioms of $\ZFC$. One of the first independence proofs was carried out by Paul Cohen, who showed using his method of forcing that the Continuum Hypothesis -- the statement that $2^{\aleph_0}=\aleph_1$ -- is independent. Once it became apparent that independence was an essential consideration in the study of cardinal exponentiation, an important distinction arose between regular and singular cardinals. A cardinal $\kappa$ is regular if it cannot be written as $\bigcup_{i<\tau}\lambda_i$, where $\tau$ and all $\lambda_i$ are less than $\kappa$; otherwise it is singular. For the continuum function on the regular cardinals, William Easton was able to show that it is constrained only by the properties that it is increasing and that $\cf(2^{\kappa})>\kappa$.
	
	However, it turns out that for the singular cardinals, a surprising number of significant statements are determined by $\ZFC$: Saharon Shelah introduced $\PCF$ theory precisely to study the behavior of non-regular cardinals like $\aleph_{\omega}$. As the first singular cardinal, it is a natural object to study. Using his $\PCF$ framework, he was able to show the remarkable result that if $\aleph_{\omega}$ is a strong limit, $2^{\aleph_{\omega}}<\aleph_{\omega_4}$. This is one example showing that the behavior of the continuum function on the singular cardinals is subject to additional non-trivial constraints. Another example of the rigidity of the continuum function for singular cardinals is a result by Jack Silver, who showed that, if $\gamma$ is a singular cardinal of uncountable cofinality and $2^{\alpha}=\alpha^+$ for stationarily many $\alpha<\gamma$, then $2^{\gamma}=\gamma^+$ as well (see \cite[Theorem 8.12]{JechSetTheory}).
	
	Shelah's proof of the bound on $2^{\aleph_{\omega}}$ relied on two canonical invariants which he isolated: The set of \emph{good points} -- those points which behave in an orderly manner with respect to a cofinal sequence through $\prod_n\aleph_n$ -- and the set of \emph{approachable points} -- those points which are limits of particularly strong elementary substructures of the universe.
	
	Shelah showed that any ordinal of cofinality $>\!\aleph_3$ is good (which is deeply connected to his bound on $2^{\aleph_\omega}$) and that any approachable point must also be a good point. However, it was not known if the latter implication can be reversed or if there can be non-approachable points of cofinality $>\!\aleph_3$. It was even speculated by Cummings, Foreman and Magidor (see \cite[Page 2]{CumForeMagCanonicalOne}) that it might be the case that any good point must also be approachable. From that fact it would of course follow immediately that there are no non-approachable points of cofinality $>\!\aleph_3$.
	
	These questions are challenging because we lack a diversity of techniques for obtaining non-approachable points. Shelah showed that there can be a stationary set of non-approachable points with cofinality $\aleph_1$ by starting with a supercompact cardinal $\kappa$, finding a regular $\gamma<\kappa$ such that there are stationarily many non-approachable points of cofinality $\gamma^{+\omega+1}$ in $\kappa^{+\omega+1}$, and then collapsing $\gamma^{+\omega}$ to $\aleph_0$ (thus turning $\gamma^{+\omega+1}$ into $\aleph_1$) and $\kappa$ to $\aleph_2$. In the resulting model, $\kappa^{+\omega+1}$ becomes $\aleph_{\omega+1}$ and a straightforward argument shows that there are still stationarily many $\alpha\in\kappa^{+\omega+1}$ (which becomes $\aleph_{\omega+1}$ in the forcing extension) of cofinality $\gamma^{+\omega+1}$ (which becomes $\aleph_1$) that are not approachable. However, these techniques are isolated to $\aleph_1$ and the non-approachable points obtained in this way are also not good.
	
	In this paper, we aim to remedy the situation. We will provide a forcing poset which turns $\aleph_{\omega+1}$ into a non-approachable ordinal while (1) also making it a good point and (2) collapsing it to have size $\aleph_n$ for any desired $n$. This forcing will be an iteration of a variant of Namba forcing together with the Levy-collapse and have two ostensibly conflicting properties: A weak form of the approximation property without adding any new functions from $\aleph_0$ into any $\aleph_n$. It is precisely this tension which allows us to show that the forcing does not make $\aleph_{\omega+1}$ approachable.
	
	Moreover, our variant of Namba forcing will be a Prikry-type poset and thus iterable using a technique of Magidor. When starting from sufficiently large cardinals, we obtain, for any $n\geq 1$, a model where $\GCH$ holds and there are stationarily many points in $\aleph_{\omega+1}\cap\cof(\aleph_n)$ which are not approachable. As a corollary (but also a consequence of our proof) we obtain that there can consistently be stationarily many points (even of cofinality $\aleph_1$) which are good but not approachable:
	
	\begin{mysenx}
		Assume $\GCH$ holds and $(\kappa_k)_{k\in\omega}$ is an increasing sequence of supercompact cardinals. Let $n\in\omega$, $n\geq 1$. There is a forcing extension in which $\GCH$ holds, $\kappa_0=\aleph_{n+1}$, $(\sup_k\kappa_k)^+=\aleph_{\omega+1}$ and there are stationarily many $\gamma\in\aleph_{\omega+1}\cap\cof(\aleph_n)$ which are good, but not approachable.
	\end{mysenx}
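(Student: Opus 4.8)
The plan is to build the model by an $\omega$-length iteration $\mathbb{P}=\langle\mathbb{P}_k,\dot{\mathbb{Q}}_k:k<\omega\rangle$ over the ground model $V$, put together by Magidor's method for iterating Prikry-type forcings; a routine Laver-style preparation is carried out first, so that each $\kappa_k$ retains enough of its supercompactness in every intermediate extension to run the construction. Each $\dot{\mathbb{Q}}_k$ combines a Lévy collapse — an initial $\Coll(\aleph_n,{<}\kappa_0)$ and then $\Coll(\kappa_k,{<}\kappa_{k+1})$ — with a stage of the announced variant of Namba forcing: a Prikry-type tree forcing which, working above $\sup_k\kappa_k$, singularizes a prescribed stationary set $S$ of ordinals in the interval $\bigl(\sup_k\kappa_k,(\sup_k\kappa_k)^+\bigr)$ to cofinality $\aleph_n$, does so coherently with a fixed scale, and preserves $(\sup_k\kappa_k)^+$. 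Standard bookkeeping then shows that $\GCH$ is preserved and no cardinal $\geq(\sup_k\kappa_k)^+$ is disturbed, so in $V^{\mathbb{P}}$ one has $\kappa_0=\aleph_{n+1}$, $\kappa_k=\aleph_{n+1+k}$, $\sup_k\kappa_k=\aleph_\omega$ and $(\sup_k\kappa_k)^+=\aleph_{\omega+1}$; and since the collapses are $\aleph_n$-closed, hence do not change cofinality $\aleph_n$, the set $S$ remains a stationary subset of $\aleph_{\omega+1}\cap\cof(\aleph_n)$.

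The substance is to establish two properties of the whole iteration $\mathbb{P}$: (a) a weak form of the approximation property at $\aleph_{\omega+1}$, and (b) that $\mathbb{P}$ adds no new function from $\aleph_0$ into any $\aleph_n$. For a single iterand, (b) follows from the Prikry property of the Namba-variant (pure decision for statements about short sequences) together with the $\aleph_n$-closure of the collapse, and (a) from a fusion/tree analysis of the Namba-variant; the real work is to carry both properties through the Magidor-style iteration with its interleaved collapses, which is done by a single induction on $k$ in which one isolates one combinatorial property of the iterands engineered to make (a) and (b) simultaneously inductive.

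Granting (a) and (b), the conclusion splits into two halves. \emph{Goodness} is built in: the Namba-variant is arranged so that the generic cofinal set $A_\gamma$ of order type $\aleph_n$ that it shoots through each $\gamma\in S$ is, by design, also a witness that $\gamma$ is a good point of the coherently chosen scale; hence every point of $S$ is good. \emph{Non-approachability} of the points of $S$ is the heart of the matter: I claim that each $\gamma\in S$ fails to be approachable with respect to any sequence $\vec a=\langle a_\alpha:\alpha<\aleph_{\omega+1}\rangle$ of $V^{\mathbb{P}}$. Suppose otherwise; then there is a cofinal $A\subseteq\gamma$ with $\otp A=\aleph_n$ all of whose proper initial segments appear among the $a_\alpha$ with $\alpha<\gamma$. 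By properties (a) and (b) together, the generic branch that the Namba-variant shoots through $\gamma$ is not anticipated by any family of bounded subsets of $\gamma$ that is present before that branch is added; hence a set $A$ witnessing approachability at $\gamma$ — whose initial segments would all have to be so anticipated — cannot exist. This is precisely the advertised tension: (b) keeps an approachability sequence harmless at cofinality $\aleph_n$ (it cannot secretly encode new short functions), while (a) keeps the Lévy collapses from re-coding the singularizing branches. Thus $S$ is a stationary set of good, non-approachable points in $\aleph_{\omega+1}\cap\cof(\aleph_n)$, and since $\GCH$ was preserved this gives the theorem.

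The step I expect to be the main obstacle is exactly this coexistence: designing the variant of Namba forcing so that it has the weak approximation property \emph{and} adds no new $\aleph_0\to\aleph_n$ functions, and then verifying that this pair of properties survives the Magidor-style $\omega$-iteration together with the interleaved Lévy collapses. Prikry-type iterations are notoriously delicate about which closure and chain-condition properties pass to limit stages, and the weak approximation property is not obviously iterable, so the argument will hinge on finding exactly the right single invariant to carry through the induction. A subsidiary difficulty is that the Lévy collapses, taken alone, would make $\aleph_{\omega+1}$ approachable; this is neutralized by interleaving them with the Namba-variant so that any putative approachability sequence would be forced to predict generic branches.
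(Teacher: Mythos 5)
Your proposal correctly identifies the engine of the non-approachability argument --- the tension between a weak approximation property and the failure to add new functions from small ordinals into the $\aleph_k$'s --- but the global architecture you build around it does not work, and it is not the paper's architecture. Two gaps are fatal. First, the approximation/no-new-functions tension is a statement about the \emph{cardinal} $\kappa^*=(\sup_k\kappa_k)^+$ being collapsed by the Namba variant composed with a L\'evy collapse: one shows that if $F\colon\mu\to\kappa^*$ were cofinal with a fixed normal subadditive coloring $d$ bounded on its range, then some $F\uhr i\notin V$ would code a new function from $i<\mu$ into some $\kappa_k$ (using normality of $d$ to bound the relevant set by $\kappa_k$), a contradiction. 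This argument has no pointwise analogue for an ordinary ordinal $\gamma\in(\sup_k\kappa_k,(\sup_k\kappa_k)^+)$ of a \emph{preserved} $\aleph_{\omega+1}$: your Namba variant is said to preserve $(\sup_k\kappa_k)^+$ and merely ``singularize'' prescribed points $\gamma$ below it, and for such $\gamma$ there is no collapsed successor-of-a-singular in play, hence nothing for the approximation property to bite on. Shooting a cofinal branch through $\gamma$ if anything tends to \emph{create} an approachability witness rather than destroy all of them; destroying all witnesses at a fixed $\gamma$ would require anticipating every sequence $\vec a$ of the final model, which your $\omega$-length iteration cannot do.

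Second, the mechanism for \emph{stationarity} is missing. The paper does not prescribe a stationary set $S$ in advance; it runs an Easton-support Magidor iteration of length $\kappa_0$ (guided by a Laver function) whose iterands collapse $\alpha^{+\omega+1}$ to $\aleph_n$ via the Namba variant at suitable $\alpha<\kappa_0$, and then lifts a $\kappa^*$-supercompact embedding $j$ with critical point $\kappa_0$. The single point $\rho=\sup j[\kappa^*]$ lies in $j(C)$ for every club $C$, and the stage-$\kappa_0$ iterand of $j(\dP)$ is precisely the Namba-plus-collapse acting on $\kappa^*$ itself, so $\rho$ is non-$j(d)$-approachable by the tension argument (plus preservation lemmas for the tail of $j(\dP)$); elementarity then yields a non-approachable point in $C$. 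Your length-$\omega$ iteration over the $\kappa_k$'s provides no embedding to reflect with, and your Laver preparation is used only to preserve supercompactness, never to anticipate the tail forcing in $j(\dP)$. Relatedly, you argue against arbitrary approachability sequences $\vec a$ directly, whereas the paper first reduces (via Shelah's theorem that $S(d)\cup S^{\kappa^*}_\omega$ generates $I[\kappa^*]$ modulo nonstationary sets) to a single ground-model $(\kappa_k)_{k\in\omega}$-normal subadditive coloring $d$; without that reduction the quantifier over all sequences of the extension is not controlled by your argument.
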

	
	We note that, since $\square_{\aleph_{\omega}}$ necessarily fails in our model, at least some large cardinals are necessary (see \cite{SargsyanSquareFailure}).
	
	The paper is organized as follows. In Section 1, we give a more in-depth introduction into the combinatorics at $\aleph_{\omega+1}$ and our results. In Section 2, we provide proofs of a few well-known lemmas about iterations of Prikry-type forcings. In Section 3, we show the consistency of the existence of a certain sequence of ideals that will be necessary for the definition and properties of our main forcing poset, which will occur in Section 4. In Section 5, we prove the main result. In the last section we modify the proof of our main theorem to obtain the existence of a model where all scales are good but the approachability property fails.
	
	We assume that the reader is familiar with the basics of cardinal arithmetic, forcing and large cardinals. Good introductory material can be found in the textbooks of Jech (see \cite{JechSetTheory}) and Kunen (see \cite{KunenSetTheory}). A more in-depth introduction into the combinatorics at singular cardinals can be found in the chapters of Abraham-Magidor (see \cite{AbrahamHandbook}) or Eisworth (see \cite{EisworthHandbook}) in the Handbook of Set Theory. In the proof of the main theorem, we will lift a ground-model embedding. Results related to this technique can be found in Cummings' chapter in the Handbook of Set Theory (see \cite[Section 9]{CummingsHandbook}).
	
	\begin{myack}
		The first author would like to thank Chris Lambie-Hanson for many illuminating conversations surrounding this problem during a recent research visit.
	\end{myack}
	
	\section{Combinatorics at $\aleph_{\omega+1}$}
	
	In this section, we will explain the notions of goodness and approachability and state standard facts that we will use freely throughout the paper. For simplicity, we will focus on the special case where our singular cardinal has countable cofinality, but the definitions and results can be generalized to singulars of any cofinality.
	
	\begin{mydef}
		Let $\delta$ be a singular cardinal with countable cofinality. Let $(\delta_n)_{n\in\omega}$ be an increasing sequence of regular cardinals converging to $\delta$. A \emph{$(\delta^+,(\delta_n)_{n\in\omega})$-scale} is a sequence $(f_{\alpha})_{\alpha<\delta^+}$ such that:
		\begin{enumerate}
			\item For all $\alpha<\delta^+$, $f_{\alpha}\in\prod_{n\in\omega}\delta_n$.
			\item For all $\alpha<\beta<\delta^+$, $f_{\alpha}<^*f_{\beta}$, i.e. there is $k\in\omega$ such that $f_{\alpha}(n)<f_{\beta}(n)$ for all $n\geq k$.
			\item For all $g\in\prod_{n\in\omega}\delta_n$ there is $\alpha<\delta^+$ such that $g<^*f_{\alpha}$.
		\end{enumerate}
		
		Given a $(\delta^+,(\delta_n)_{n\in\omega})$-scale $(f_{\alpha})_{\alpha<\delta^+}$, we say that an ordinal $\gamma<\delta^+$ is \emph{good for $(f_{\alpha})_{\alpha<\delta^+}$} if there exists an unbounded $A\subseteq\gamma$ and $k\in\omega$ such that for all $n\geq k$, the sequence $(f_{\alpha}(n))_{\alpha\in A}$ is strictly increasing.
	\end{mydef}
	
	Shelah showed that whenever $\delta$ is singular with countable cofinality, there is an increasing sequence $(\delta_n)_{n\in\omega}$ of regular cardinals converging to $\delta$ such that there exists a $(\delta^+,(\delta_n)_{n\in\omega})$-scale (see \cite[Theorem 3.53]{EisworthHandbook}). Moreover, if $\delta=\aleph_{\omega}$, there is a maximal choice of $(\delta_n)_{n\in\omega}$ modulo finite sets (this is folklore). Additionally, whenever $(f_{\alpha})_{\alpha<\delta^+}$ and $(f_{\alpha}')_{\alpha<\delta^+}$ are both $(\delta^+,(\delta_n)_{n\in\omega})$-scales, there is a club $C\subseteq\delta^+$ such that any $\gamma\in C$ is good for $(f_{\alpha})_{\alpha<\delta^+}$ if and only if it is good for $(f_{\alpha}')_{\alpha<\delta^+}$, since there is a club of points $\gamma\in\delta^+$ such that $\{f_{\alpha}\;|\;\alpha<\gamma\}$ is unbounded in $\{f_{\alpha}'\;|\;\alpha<\gamma\}$ and vice versa. Ergo it makes sense to speak of ``good points'' without referring to the exact scale, knowing that this is well-defined modulo club sets.
	
	The following characterization of goodness is often easier to verify (see \cite[Theorem 3.50]{EisworthHandbook}):
	
	\begin{myfact}
		Let $(f_{\alpha})_{\alpha<\delta^+}$ be a $(\delta^+,(\delta_n)_{n\in\omega})$-scale and $\gamma<\delta^+$. Then $\gamma$ is good for $(f_{\alpha})_{\alpha<\delta^+}$ if and only if there exists $h\in\prod_{n\in\omega}\delta_n$ such that:
		\begin{enumerate}
			\item For almost all $n\in\omega$, $\cf(h(n))=\cf(\gamma)$.
			\item For all $\alpha<\gamma$, $f_{\alpha}<^*h$.
			\item For all $g\in\prod_{n\in\omega}\delta_n$ with $g<^*h$ there is some $\alpha<\gamma$ such that $g<^*f_{\alpha}$.
		\end{enumerate}
	\end{myfact}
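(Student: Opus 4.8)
The plan is to prove the two implications separately, after the preliminary observation that conditions (2) and (3) together assert precisely that $h$ is a $\leq^*$-exact upper bound of $\{f_\alpha\;|\;\alpha<\gamma\}$, while (1) fixes its pointwise cofinality; so the Fact says that $\gamma$ is good if and only if $\{f_\alpha\;|\;\alpha<\gamma\}$ admits an exact upper bound of pointwise cofinality $\cf(\gamma)$. Write $\mu=\cf(\gamma)$ throughout, and for definiteness I treat the case $\mu>\omega$ (the countable case is similar and is contained in \cite[Theorem 3.50]{EisworthHandbook}).

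For $(\Rightarrow)$ I would fix a witness to goodness: an unbounded $A\subseteq\gamma$ and $k\in\omega$ such that $\langle f_\alpha(n)\rangle_{\alpha\in A}$ is strictly increasing for every $n\geq k$. First I would thin $A$ to have order type $\mu$ — goodness passes to cofinal subsets — and, by enlarging $k$, arrange $\mu<\delta_n$ for all $n\geq k$. Define $h$ by $h(n)=\sup_{\alpha\in A}f_\alpha(n)$ for $n\geq k$ and $h(n)=0$ otherwise; since $\delta_n$ is regular and $\mu<\delta_n$, this supremum lies below $\delta_n$, so $h\in\prod_n\delta_n$. Condition (1) is then immediate, since for $n\geq k$ the sequence $\langle f_\alpha(n)\rangle_{\alpha\in A}$ is strictly increasing of length $\mu$; condition (2) follows because, given $\alpha<\gamma$, any $\beta\in A$ above $\alpha$ satisfies $f_\alpha<^*f_\beta$ and $f_\beta(n)<h(n)$ for $n\geq k$. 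The only step needing an idea is (3): given $g<^*h$, fix $m\geq k$ with $g(n)<h(n)$ for $n\geq m$ and, for each such $n$, pick $\alpha_n\in A$ with $g(n)<f_{\alpha_n}(n)$; since $\mu>\omega$ the countably many $\alpha_n$ are bounded in $\gamma$, so there is a single $\alpha\in A$ strictly above all of them, and coordinatewise monotonicity on $A$ past $k$ gives $g(n)<f_{\alpha_n}(n)<f_\alpha(n)$ for all $n\geq m$, i.e. $g<^*f_\alpha$.

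For $(\Leftarrow)$ I would fix $N$ with $\cf(h(n))=\mu$ for all $n\geq N$, choose a strictly increasing cofinal sequence $\langle h^i_n\;|\;i<\mu\rangle$ in each $h(n)$ ($n\geq N$), and a strictly increasing cofinal sequence $\langle\gamma_i\;|\;i<\mu\rangle$ in $\gamma$, and then recursively build an increasing cofinal sequence $\langle\alpha_i\;|\;i<\mu\rangle$ in $\gamma$ witnessing goodness. At stage $i$, set $\rho_i=\max(\gamma_i,\sup_{j<i}\alpha_j)$, which is below $\gamma$ because $i<\mu=\cf(\gamma)$; using $f_{\rho_i}<^*h$ together with the ladders, replace $f_{\rho_i}$ on the set where it already lies below $h$ by the corresponding ladder values to obtain $g_i\in\prod_n\delta_n$ with $f_{\rho_i}<^*g_i<^*h$; then invoke (3) to get $\alpha_i<\gamma$ with $g_i<^*f_{\alpha_i}$, which forces $\alpha_i>\rho_i$. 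Now $\langle\alpha_i\rangle$ is increasing and cofinal in $\gamma$, and for $j<i$ one gets $f_{\alpha_j}(n)\leq f_{\rho_i}(n)<g_i(n)<f_{\alpha_i}(n)$ for all sufficiently large $n$, so $\gamma$ is good once these exceptional finite sets are made uniform. That last point — producing one $k$ that works for all pairs simultaneously — is the main obstacle of the whole argument; it is handled by choosing the $g_i$ coherently along the recursion, arranging in advance (via the ladders $\langle h^i_n\rangle$) control on the thresholds witnessing $f_{\rho_i}<^*g_i$ and $g_i<^*f_{\alpha_i}$, exactly as in the proof of \cite[Theorem 3.50]{EisworthHandbook}, which also covers the $\mu=\omega$ bookkeeping glossed over in $(\Rightarrow)$.
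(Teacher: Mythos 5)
The paper never proves this Fact --- it is quoted directly from \cite[Theorem 3.50]{EisworthHandbook} --- so there is no in-paper argument to compare yours against; judged on its own, your sketch is the standard proof and is essentially correct. The forward direction is complete as written for $\cf(\gamma)>\omega$: setting $h(n)=\sup_{\alpha\in A}f_\alpha(n)$ past a point where $\cf(\gamma)<\delta_n$ gives (1)--(3), and bounding the countably many $\alpha_n$ below $\gamma$ is exactly where uncountable cofinality enters. In the backward direction you correctly isolate the only genuine difficulty (one threshold $k$ for all pairs) but defer it to the reference; for completeness the resolution is short enough to record: writing $\mu=\cf(\gamma)$, choose $g_i(n)=h^{\sigma_i}_n$ for a \emph{single} index $\sigma_i<\mu$ (possible because $\mu$ is regular uncountable, so the $\omega$ many least indices $\iota$ with $h^{\iota}_n>f_{\rho_i}(n)$ are bounded in $\mu$), and arrange $(\sigma_i)_{i<\mu}$ to be increasing; then $g_j(n)\leq g_i(n)$ for \emph{every} $n\geq N$ whenever $j\leq i$. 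Letting $k_j\in\omega$ bound the finitely many exceptions in $g_j<^*f_{\alpha_j}$ and in $f_{\alpha_j}<^*g_{j+1}$ (the latter holds since $\rho_{j+1}\geq\alpha_j$), the pigeonhole principle on $\mu$ gives an unbounded $X\subseteq\mu$ with $k_j=k$ constant on $X$, and then for $j<i$ in $X$ and $n\geq\max(k,N)$ one has $f_{\alpha_j}(n)<g_{j+1}(n)\leq g_i(n)<f_{\alpha_i}(n)$, so $\{\alpha_i\;|\;i\in X\}$ witnesses goodness. With that inserted your argument is self-contained for uncountable cofinality, which is the only case the paper uses; the $\cf(\gamma)=\omega$ bookkeeping is indeed different and is reasonably left to the cited source.
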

	
	Points (2) and (3) state that $h$ is an \emph{exact upper bound of $(f_{\alpha})_{\alpha<\gamma}$}.
	
	The approachability ideal was introduced by Shelah (see \cite{ShelahApproachability}) in order to obtain indestructibility results for stationary sets under sufficiently closed forcing notions.
	
	\begin{mydef}
		Let $\lambda$ be a cardinal. A set $S$ is in the \emph{approachability ideal} $I[\lambda^+]$ if there is a club $C\subseteq\lambda^+$ and a sequence $(a_{\alpha})_{\alpha<\lambda^+}$ of elements of $[\lambda^+]^{<\lambda}$ such that whenever $\gamma\in S\cap C$, there is an unbounded set $A\subseteq\gamma$ with ordertype $\cf(\gamma)$ such that $A\cap\beta\in\{a_{\alpha}\;|\;\alpha<\gamma\}$ for all $\beta<\gamma$.
	\end{mydef}
	
	If $\lambda^{<\lambda}\leq\lambda^+$ (so e.g. if $2^{\lambda}=\lambda^+$) there is a single sequence $(a_{\alpha})_{\alpha<\lambda^+}$ enumerating all of $[\lambda^+]^{<\lambda}$. In this case, the approachability ideal is generated by a single set modulo the club filter. So again it makes sense to speak of ``approachable points'' without referring to a specific sequence $(a_{\alpha})_{\alpha<\lambda^+}$, knowing that this is well-defined modulo club sets.
	
	The approachability ideal behaves quite differently at successors of regulars when compared to successors of singulars: For regular cardinals $\lambda$, $\lambda$ is often referred to as the \emph{critical cofinality}, since the set $\lambda^+\cap\cof(<\!\lambda)$ is always in $I[\lambda^+]$. Due to this, the approachability ideal is completely determined by the membership of subsets of $\lambda^+\cap\cof(\lambda)$. For singular cardinals $\lambda$ however, there are many more possibilities: $\ZFC$ only implies that the set $\lambda^+\cap\cof(\leq\!\cf(\lambda))$ is in $I[\lambda^+]$ and since there are many cardinals between $\cf(\lambda)$ and $\lambda$ there is a great potentiality regarding the behavior of $I[\lambda^+]$.
	
	The notions of goodness and approachability share a deep connection: Shelah showed that any approachable point is also good. Additionally, he was the first to show that consistently there can be stationarily many non-approachable points in $\aleph_{\omega+1}$ of cofinality $\aleph_1$ (see \cite{Shelah88}, for a proof see \cite{ShelahSuccSingCard}). In the first paper, he raised the following question:
	
	\begin{myque}
		Is $\GCH+\{\delta<\aleph_{\omega+1}\;|\;\cf(\delta)>\aleph_1\}\notin I[\aleph_{\omega+1}]$ consistent with $\ZFC$?
	\end{myque}
	
	A related question appears in a survey by Foreman (see \cite{ForemanSurvey}):
	
	\begin{myque}
		Can there be a stationary set of good points which are not approachable?
	\end{myque}
	
	These questions have the following connection: It can be shown that any point of cofinality $>\!2^{\aleph_0}$ is good (see \cite[Corollary 4.61]{EisworthHandbook}). Additionally, Shelah showed that, regardless of any cardinal arithmetic, any point of cofinality $>\!\aleph_3$ is good as well (see \cite[Section 2.1]{AbrahamHandbook}). So answering the question of Shelah would furnish us with a model where there are necessarily stationarily many points of cofinality say $\aleph_n$ which are good, but not approachable. One could then even use the simple Levy collapse of $\aleph_{n-1}$ to $\aleph_0$ in order to obtain a model where there are stationarily many non-approachable points of cofinality $\aleph_1$ which are good.
	
	A very powerful characterization of approachability, specifically at successors of singular cardinals, can be obtained using normal, subadditive colorings. For ease of notation, we will regard $[\kappa]^2$ as the set of all pairs $(\alpha,\beta)$ with $\alpha<\beta<\kappa$.
	
	\begin{mydef}
		Let $\delta$ be a limit cardinal with countable cofinality and let $(\delta_n)_{n\in\omega}$ be an ascending sequence of regular cardinals converging to $\delta$. Let $d\colon[\delta^+]^2\to\omega$ be a coloring. Then we say that:
		\begin{enumerate}
			\item $d$ is \emph{normal} if for any $n\in\omega$,
			$$\sup_{\alpha\in\delta^+}|\{\beta<\alpha\;|\;d(\beta,\alpha)\leq n\}|<\delta$$
			\item $d$ is \emph{$(\delta_n)_{n\in\omega}$-normal} if for any $n\in\omega$ and $\alpha\in\delta^+$,
			$$|\{\beta<\alpha\;|\;d(\beta,\alpha)\leq n\}|\leq\delta_n$$
			\item $d$ is \emph{subadditive} if for any $\alpha<\beta<\gamma<\delta^+$,
			$$d(\alpha,\gamma)\leq\max\{d(\alpha,\beta),d(\beta,\gamma)\}$$
		\end{enumerate}
	\end{mydef}
	
	The notion of $(\delta_n)_{n\in\omega}$-normality is not really necessary for the arguments but it does streamline the notation, so we have chosen to define it. It can be shown easily that whenever $\delta$ is singular of countable cofinality and $(\delta_n)_{n\in\omega}$ is a sequence of regular cardinals converging to $\delta$, there exists a $(\delta_n)_{n\in\omega}$-normal subadditive coloring on $\delta^+$.
	
	\begin{mydef}
		Let $\delta$ be a limit cardinal with countable cofinality and $d\colon[\delta^+]^2\to\omega$ a normal subadditive coloring. Then an ordinal $\alpha\leq\delta^+$ with $\cf(\alpha)>\omega$ is \emph{$d$-approachable} if there is an unbounded $A\subseteq\alpha$ and $n\in\omega$ such that $d(\beta_0,\beta_1)\leq n$ for any $\beta_0<\beta_1$, both in $A$. We let $S(d)$ consist of all those $\alpha<\delta^+$ which are $d$-approachable.
	\end{mydef}
	
	Whenever $d\colon[\delta^+]^2\to\omega$ is a normal subadditive coloring, $\delta^+$ itself can never be $d$-approachable. However, we have chosen to include it in the definition since it might become $d$-approachable in forcing extensions where it is no longer a cardinal (so we can say ``$\delta^+$ does not become $d$-approachable'').
	
	Interestingly, there is the following straightforward link between scales and colorings:
	
	\begin{myex}
		Let $\vec{f}=(f_{\alpha})_{\alpha<\delta^+}$ be a $(\delta^+,(\delta_n)_{n\in\omega})$-scale. The \emph{scale coloring} $d_{\vec{f}}$ is defined by
		$$d_{\vec{f}}(\alpha,\beta):=\min\{n\in\omega\;|\;\forall k\geq n(f_{\alpha}(k)<f_{\beta}(k))\}$$
		this coloring is subadditive (if $f_{\alpha}(k)<f_{\gamma}(k)$, it follows that either $f_{\alpha}(k)<f_{\beta}(k)$ or $f_{\beta}(k)<f_{\gamma}(k)$), but not normal.
		
		Clearly, a point $\gamma$ is good if for $(f_{\alpha})_{\alpha<\delta^+}$ if and only if it is $d_{\vec{f}}$-approachable.
	\end{myex}
	
	This highlights an interesting problem when trying to obtain non-approachable good points: In order to make a point $\gamma$ non-$d$-approachable, we have to make sure that there is no unbounded subset of $\gamma$ on which $d$ is bounded. However, in order to make $\gamma$ good, there needs to be an unbounded subset of $\gamma$ on which $d_{\vec{f}}$ is bounded. Therefore we have to obtain preservation theorems which incorporate the normality of the coloring.
	
	We will also need the following fact about $d$-approachability (see \cite[Remark 28]{ShelahSuccSingCard}):
	
	\begin{myfact}\label{ApproachRefinement}
		Let $\delta$ be a limit cardinal with countable cofinality and let $d\colon[\delta^+]^2\to\omega$ be a subadditive coloring. Let $\alpha<\delta^+$, $\cf(\alpha)>\omega$. Then $\alpha$ is $d$-approachable if and only if whenever $B\subseteq\alpha$ is unbounded, there is $B'\subseteq B$ unbounded and $n\in\omega$ such that $d(\beta_0,\beta_1)\leq n$ for any $\beta_0<\beta_1$, both in $B'$.
	\end{myfact}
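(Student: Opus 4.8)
The statement is an equivalence whose backward direction is immediate: given the refinement property, apply it to $B=\alpha$ to obtain an unbounded $B'\subseteq\alpha$ and $n\in\omega$ with $d\uhr[B']^2\leq n$, which is precisely $d$-approachability of $\alpha$. So the plan is to prove the forward direction. Fix an unbounded $A\subseteq\alpha$ and $n_0\in\omega$ with $d\uhr[A]^2\leq n_0$ witnessing that $\alpha$ is $d$-approachable, and let $B\subseteq\alpha$ be an arbitrary unbounded set; the goal is to produce a single $n\in\omega$ and an unbounded $B'\subseteq B$ with $d\uhr[B']^2\leq n$.

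The device I would use is to attach to each $\beta\in B$ an \emph{absorption level}. Let $a(\beta):=\min(A\setminus(\beta+1))$ — the least member of $A$ strictly above $\beta$, which exists since $A$ is unbounded in $\alpha$ — and set $e(\beta):=\max\{d(\beta,a(\beta)),n_0\}$. The point of this definition is the \emph{absorption property}: for every $a\in A$ with $a\geq a(\beta)$ one has $d(\beta,a)\leq e(\beta)$; indeed this is trivial if $a=a(\beta)$, and if $a>a(\beta)$ it follows from subadditivity applied to $\beta<a(\beta)<a$ together with $d(a(\beta),a)\leq n_0$. Now invoke $\cf(\alpha)>\omega$: since $B=\bigcup_{m\in\omega}\{\beta\in B:e(\beta)=m\}$ and $B$ is unbounded, one of the level sets $B':=\{\beta\in B:e(\beta)=e_1\}$ is unbounded in $\alpha$. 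This pigeonhole step is the only place where the hypothesis $\cf(\alpha)>\omega$ genuinely enters, and it is what forces the statement to exclude countable cofinality.

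It remains to check that $d\uhr[B']^2\leq e_1$, which completes the proof with $n:=e_1$. Given $\beta_1<\beta_2$ in $B'$, I would argue by cases on the position of $a(\beta_1)$ relative to $\beta_2$. If $\beta_2\leq a(\beta_1)$: the subcase $\beta_2=a(\beta_1)$ is immediate from absorption at $\beta_1$; and if $\beta_2<a(\beta_1)$ then $a(\beta_2)\leq a(\beta_1)$, so absorption at $\beta_1$ and at $\beta_2$ gives $d(\beta_1,a(\beta_1)),d(\beta_2,a(\beta_1))\leq e_1$, whence subadditivity applied to $\beta_1<\beta_2<a(\beta_1)$ yields $d(\beta_1,\beta_2)\leq e_1$. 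If instead $a(\beta_1)<\beta_2$, then $\beta_1<a(\beta_1)<\beta_2<a(\beta_2)$; subadditivity applied to $a(\beta_1)<\beta_2<a(\beta_2)$, together with $d(a(\beta_1),a(\beta_2))\leq n_0$ and absorption at $\beta_2$, bounds $d(a(\beta_1),\beta_2)\leq e_1$, and then subadditivity applied to $\beta_1<a(\beta_1)<\beta_2$ bounds $d(\beta_1,\beta_2)\leq\max\{d(\beta_1,a(\beta_1)),d(a(\beta_1),\beta_2)\}\leq e_1$.

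The computations above are short, and the one point I expect to require care is that the argument uses subadditivity of $d$ in both of the forms available for a triple $x<y<z$: not only the ``long edge'' bound $d(x,z)\leq\max\{d(x,y),d(y,z)\}$ but also the ``left edge'' bound $d(x,y)\leq\max\{d(x,z),d(y,z)\}$, the latter being exactly what lets a bound on $d(a(\beta_1),\beta_2)$ (and hence on $d(\beta_1,\beta_2)$) be pulled back from a larger third point. So I would make explicit at the outset that the subadditive colorings under consideration satisfy both inequalities. As an alternative to the level-set bookkeeping one could instead recursively build, in $\cf(\alpha)$ steps, an increasing cofinal sequence through a constant-$e$ subset of $B$ whose terms interleave with members of $A$ and run the same two subadditivity estimates along it; but routing everything through $e(\beta)$ keeps the combinatorics minimal and makes transparent where $\cf(\alpha)>\omega$ is used.
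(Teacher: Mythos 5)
Your overall strategy (interleave $B$ with the witnessing set $A$, pigeonhole a finite datum onto an unbounded subset using $\cf(\alpha)>\omega$, then chase subadditivity) is the same as the paper's, and the backward direction and the pigeonhole step are fine. But there is a genuine gap in the forward direction: your case analysis needs the \emph{reversed} inequality $d(x,y)\leq\max\{d(x,z),d(y,z)\}$ for $x<y<z$ — you invoke it to pull the bound on $d(\beta_1,\beta_2)$ back from the third point $a(\beta_1)$ in the case $\beta_2<a(\beta_1)$, and again to bound $d(a(\beta_1),\beta_2)$ from the triple $a(\beta_1)<\beta_2<a(\beta_2)$. The paper's definition of ``subadditive'' postulates only the forward inequality $d(x,z)\leq\max\{d(x,y),d(y,z)\}$, and the reversed one does not follow from it (a coloring can have $d(x,z)$ and $d(y,z)$ small while $d(x,y)$ is huge without violating the forward inequality). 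You notice this yourself and propose to ``make explicit at the outset'' that both inequalities hold, but that is not a clarification: it is an additional hypothesis that changes the statement, and the Fact is true — and is applied later in the paper — for colorings satisfying only the one-sided condition.

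The reason your bookkeeping forces the two-sided form is that $e(\beta)$ records only the edge from $\beta$ \emph{up} to the next $A$-point $a(\beta)$; to reach $\beta_2$ you must then come \emph{down} from an $A$-point, which is exactly the reversed inequality. The paper's proof avoids this by enumerating $A$ and $B$ in an interleaved fashion $\alpha_i<\beta_i<\alpha_{i+1}$ and pigeonholing on $n_i=\max\{d(\alpha_i,\beta_i),d(\beta_i,\alpha_{i+1}),n\}$, i.e.\ recording for each $\beta_i$ both the edge up to the next $A$-point \emph{and} the edge from the preceding $A$-point up into $\beta_i$. The path $\beta_i\to\alpha_{i+1}\to\alpha_j\to\beta_j$ then uses only the forward inequality, since its final step enters $\beta_j$ from below. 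The fix for your argument is therefore to add the downward datum $d(\alpha,\beta)$ (for the $A$-point $\alpha$ just below $\beta$ in an interleaved enumeration) to your absorption level, which essentially reproduces the paper's proof.
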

	
	\begin{proof}
		Let $A\subseteq\alpha$ be unbounded such that $d\uhr[A]^2$ is bounded by some $n$. By refining $A$ and $B$ if necessary, we can assume that $A=\{\alpha_i\;|\;i<\cf(\alpha)\}$, $B=\{\beta_i\;|\;i<\cf(\alpha)\}$ and for any $i$, $\alpha_i<\beta_i<\alpha_{i+1}$. For $i<\cf(\alpha)$, let
		$$n_i:=\max\{d(\alpha_i,\beta_i),d(\beta_i,\alpha_{i+1}),n\}$$
		Then there is an unbounded $X\subseteq\cf(\alpha)$ such that $n_i=n^*$ for all $i\in X$. In particular, $\{\beta_i\;|\;i\in X\}$ is unbounded in $\alpha$. Moreover, for any $i<j$, both in $X$, we have
		\begin{align*}
			d(\beta_i,\beta_j) & \leq\max\{d(\beta_i,\alpha_{i+1}),d(\alpha_{i+1},\beta_j)\} \\
			&\leq\max\{d(\beta_i,\alpha_{i+1}),d(\alpha_{i+1},\alpha_j),d(\alpha_j,\beta_j)\} \\
			&\leq n^*
		\end{align*}
		so $d$ is bounded on $\{\beta_i\;|\;i\in X\}\subseteq B$.
	\end{proof}
	
	The following fact, which was first proven by Shelah (see \cite{Shelah88} and \cite[Corollary 3.35]{EisworthHandbook} for a more modern proof), connects normal subadditive colorings to the approachability ideal (recall that $S_{\omega}^{\delta^+}:=\delta^+\cap\cof(\omega)$):
	
	\begin{myfact}\label{ColoringApproach}
		Let $\delta$ be a singular strong limit cardinal with countable cofinality and let $d\colon[\delta^+]^2\to\omega$ be a normal subadditive coloring. Then $S(d)\cup S_{\omega}^{\delta^+}$ generates $I[\lambda]$ modulo nonstationary sets.
	\end{myfact}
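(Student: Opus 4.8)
The plan is to establish the two inclusions separately: first that $S(d)$ and $S_\omega^{\delta^+}$ both lie in $I[\delta^+]$, and conversely that every member of $I[\delta^+]$ is contained, modulo a club, in $S(d)\cup S_\omega^{\delta^+}$. Together these say precisely that $S(d)\cup S_\omega^{\delta^+}$, together with the nonstationary ideal, generates $I[\delta^+]$.

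For the first inclusion, $S_\omega^{\delta^+}\in I[\delta^+]$ is the classical observation that a cofinal $\omega$-sequence in $\gamma$ has finite proper initial segments, so any $(a_\alpha)_{\alpha<\delta^+}$ with $\{a_\alpha\;|\;\alpha<\gamma\}\supseteq[\gamma]^{<\omega}$ for club-many $\gamma$ witnesses it. For $S(d)$ we use normality together with the fact that $\delta$ is a strong limit. If $\gamma\in S(d)$, fix an unbounded $A\subseteq\gamma$ of ordertype $\cf(\gamma)$ and $n\in\omega$ with $d\uhr[A]^2\leq n$; for $\beta<\gamma$, choosing $\alpha'\in A$ above $\beta$ gives $A\cap\beta\subseteq B_n(\alpha'):=\{\zeta<\alpha'\;|\;d(\zeta,\alpha')\leq n\}$, and $|B_n(\alpha')|<\delta$ by normality. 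Since $\delta$ is a strong limit, $2^{<\delta}=\delta$, so the collection of all subsets of all the sets $B_n(\alpha')$ (for $\alpha'<\delta^+$, $n\in\omega$) can be enumerated as $(a_\alpha)_{\alpha<\delta^+}$ with each $a_\alpha\in[\delta^+]^{<\delta}$, arranged so that for club-many $\gamma$ every subset of every $B_n(\alpha')$ with $\alpha'<\gamma$, $n\in\omega$ appears with index below $\gamma$ (there being only $\leq\delta$ such sets below any fixed $\gamma<\delta^+$). Then the set $A$ above witnesses that $\gamma$ belongs to the $I[\delta^+]$-set determined by $(a_\alpha)_\alpha$, so $S(d)\in I[\delta^+]$; as $I[\delta^+]$ is an ideal, $S(d)\cup S_\omega^{\delta^+}\in I[\delta^+]$ as well.

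For the converse, fix $S\in I[\delta^+]$ witnessed by a club $C_0$ and a sequence $(a_\alpha)_{\alpha<\delta^+}$; we must show that for a club of $\gamma$, if $\gamma\in S\cap C_0$ and $\cf(\gamma)>\omega$ then $\gamma\in S(d)$. Take a continuous $\in$-increasing chain $(M_\alpha)_{\alpha<\delta^+}$ of elementary submodels of some $H_\theta$, each of size $\delta$ with $\delta\subseteq M_0$ (possible since $\delta$ is a strong limit of countable cofinality), with $d$, $(a_\alpha)_\alpha$, $C_0\in M_0$, with $(M_\beta)_{\beta\leq\alpha}\in M_{\alpha+1}$, and with $M_\alpha\cap\delta^+\in\delta^+$; let $C$ be the club of $\gamma\in C_0$ which are limits of $\{\alpha\;|\;M_\alpha\cap\delta^+=\alpha\}$. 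Fix $\gamma\in S\cap C$ with $\cf(\gamma)>\omega$ and an unbounded $A\subseteq\gamma$ of ordertype $\cf(\gamma)$ all of whose proper initial segments lie in $\{a_\alpha\;|\;\alpha<\gamma\}$, and hence in $M_\gamma$. One then builds, by recursion along a continuous cofinal sequence through $\gamma$ and using elementarity of the $M_\alpha$, an unbounded $A'\subseteq\gamma$ and a single $n\in\omega$ with $d\uhr[A']^2\leq n$, whence $\gamma\in S(d)$: at each stage the relevant ``$d$-balls'' $\{\zeta\;|\;d(\zeta,\eta)\leq m\}$ have size $<\delta$ by normality and are therefore absorbed by the models, so that the objects being tracked stay visible inside $M_\gamma$; the assumption $\cf(\gamma)>\omega$ forces a single colour value $n$ to work cofinally often, since $\gamma$ is not the union of $\omega$ many of its bounded subsets; and subadditivity of $d$ is what lets one pass from a bound along consecutive pairs of the constructed increasing sequence to the uniform bound $n$ on all pairs of $A'$. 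The details of this recursion are the technical core of the statement; see \cite[Corollary 3.35]{EisworthHandbook}.

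Combining the two inclusions yields the claim. The hard part is the recursion in the converse direction, where the two requirements pull against each other: the colour value must not be allowed to grow (which is why $\cf(\gamma)>\omega$ is needed) while the sets one tracks must stay small enough for the submodels to see them (which is why normality of $d$ is needed), and only then does subadditivity allow the local colour bounds to be amalgamated into a single global one.
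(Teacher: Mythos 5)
The paper does not actually prove this statement: it is quoted as a known Fact of Shelah, with \cite[Corollary 3.35]{EisworthHandbook} cited for a modern proof, so there is no in-paper argument to compare against. Judged on its own terms, your first inclusion is correct and complete: the observation that every initial segment $A\cap\beta$ of a witnessing set is contained in some $B_n(\alpha')=\{\zeta<\alpha'\;|\;d(\zeta,\alpha')\leq n\}$ with $\alpha'\in A$ above $\beta$, that normality bounds $|B_n(\alpha')|$ below $\delta$ uniformly in $\alpha'$, and that the strong limit hypothesis lets one enumerate all subsets of all the $B_n(\alpha')$ in ordertype $\delta^+$ with a club of closure points, is exactly the standard argument for $S(d)\in I[\delta^+]$, and hence for $S(d)\cup S_{\omega}^{\delta^+}\in I[\delta^+]$.

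The converse inclusion is where the real content of the Fact lies, and there you have not given a proof: you set up the chain of elementary submodels and then describe in qualitative terms what the recursion ``should'' do (normality keeps the $d$-balls small enough for the models to capture, uncountable cofinality stabilizes a single colour value, subadditivity amalgamates the local bounds), before explicitly deferring the construction to \cite[Corollary 3.35]{EisworthHandbook}. That description names the right ingredients, but the passage from ``one colour value works cofinally often'' to an unbounded subset of $\gamma$ on which $d$ is uniformly bounded is precisely the delicate step (compare Fact \ref{ApproachRefinement}, where an analogous amalgamation via subadditivity has to be done by hand), and as written it is asserted rather than carried out. So your proposal is self-contained on the easy direction and a citation in place of a proof on the hard one --- which, to be fair, is also all the paper itself offers for this Fact.
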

	
	The upshot of the previous fact is that to show the existence of stationarily many non-approachable points it suffices to fix an arbitrary normal subadditive coloring $d$ and show that there exist stationarily many non-$d$-approachable points. Additionally, since the properties of being subadditive and normal are often preserved by forcing extensions, one can take such a coloring from the ground model.
	
	Our approach is therefore the following: We will introduce a forcing which collapses the current $\aleph_{\omega+1}$ to some arbitrary $\aleph_n$ while not making it $d$-approachable for any normal subadditive coloring $d$ from the ground model (but necessarily making it $e$-approachable for some non-normal colorings $e$). Then we iterate instances of this forcing with supercompact length and obtain the desired forcing extension.
	
	\section{Iterations of Prikry-Type Forcings}
	
	We will be using Namba-style forcings to obtain our consistency results. In many cases, these forcings derive their regularity properties from very abstract considerations (such as Shelah's \emph{$\dI$-condition}) and are typically only iterable with revised countable support. This is due to the fact that we need countable supports in order not to introduce too many ``tasks'' but also have to revise our supports in order to accomodate new countable sets which are not covered by any countable set from the ground model.
	
	In this work, we will use Namba-style forcings derived from particularly well-behaved ideals which function in a similar way to diagonal Prikry forcing (see \cite[Section 1.3]{GitikHandbook}) and derive their regularity properties from the Prikry property together with a sufficiently closed direct extension ordering. Due to this, we can iterate such forcings in a much simpler way. This material can be found in Gitik's chapter in the Handbook of Set Theory (see \cite{GitikHandbook}), but we have chosen to include the proofs as some of them are only sketched in the chapter.
	
	\begin{mydef}
		Let $(\dP,\leq)$ be a partial order and $\leq_0$ another partial order on $\dP$ such that $\leq$ refines $\leq_0$. We say that $(\dP,\leq,\leq_0)$ is a \emph{Prikry-type forcing} if for every $p\in\dP$ and every statement $\sigma$ in the forcing language there is $q\leq_0p$ which decides $\sigma$ (this is known as the \emph{Prikry property}).
	\end{mydef}
	
	It turns out that Prikry-type forcing is iterable. Since we need the iteration to be $\nu$-cc at Mahlo cardinals $\nu$, we have chosen to iterate with Easton support instead of full support (see \cite[Section 6.3]{GitikHandbook}). This involves a strange requirement where we allow only finitely many non-direct extensions in the support but arbitrarily many non-direct extensions outside of it. The reason for this will become apparent later in the proof of the Prikry property.
	
	\begin{mydef}
		Let $((\dP_{\alpha},\leq_{\alpha},\leq_{\alpha,0}),(\dot{\dQ}_{\alpha},\dot{\leq}_{\alpha},\dot{\leq}_{\alpha,0}))_{\alpha<\rho}$ be a sequence such that each $(\dP_{\alpha},\leq_{\alpha},\leq_{\alpha,0})$ is a poset and each $(\dot{\dQ}_{\alpha},\dot{\leq}_{\alpha},\dot{\leq}_{\alpha,0})$ is a $\dP_{\alpha}$-name for a Prikry-type poset. We define the statement ``$((\dP_{\alpha},\leq_{\alpha},\leq_{\alpha,0}),(\dot{\dQ}_{\alpha},\dot{\leq}_{\alpha},\dot{\leq}_{\alpha,0}))_{\alpha<\rho}$ is an Easton-support Magidor iteration of Prikry-type forcings of length $\rho$'' by induction on $\rho$.
		
		$((\dP_{\alpha},\leq_{\alpha},\leq_{\alpha,0}),(\dot{\dQ}_{\alpha},\dot{\leq}_{\alpha},\dot{\leq}_{\alpha,0}))_{\alpha<\rho}$ is an Easton-support Magidor iteration of Prikry-type forcings of length $\rho$ if $((\dP_{\alpha},\leq_{\alpha},\leq_{\alpha,0}),(\dot{\dQ}_{\alpha},\dot{\leq}_{\alpha},\dot{\leq}_{\alpha,0}))_{\alpha<\rho'}$ is an Easton-support Magidor iteration of Prikry-type forcings of length $\rho'$ for every $\rho'<\rho$ and moreover:
		\begin{enumerate}
			\item If $\rho=\rho'+1$, then $(\dP_{\rho},\leq_{\rho}):=(\dP_{\rho'},\leq_{\rho'})*(\dot{\dQ}_{\rho'},\dot{\leq}_{\rho'})$ and $(p',\dot{q}')\leq_{\rho,0}(p,\dot{q})$ if and only if $p'\leq_{\rho',0}p$ and $p'\Vdash\dot{q}'\dot{\leq}_{\rho',0}\dot{q}$.
			\item If $\rho$ is a limit, then $\dP_{\rho}$ consists of all functions $p$ on $\rho$ such that
			\begin{enumerate}
				\item For all $\alpha<\rho$, $p\uhr\alpha\in\dP_{\alpha}$,
				\item If $\rho$ is inaccessible and $|\dP_{\alpha}|<\rho$ for every $\alpha<\rho$, then there is some $\beta<\rho$ such that for all $\gamma\in(\beta,\rho)$, $p\uhr\gamma\Vdash p(\gamma)=1_{\dot{\dQ}_{\gamma}}$.
			\end{enumerate}
			and the following holds:
			\begin{enumerate}
				\item[(i)] $p'\leq_{\rho}p$ if and only if $p'\uhr\rho'\leq_{\rho'}p\uhr\rho'$ for every $\rho'<\rho$ and there exists a finite subset $b$ such that whenever $\rho'\notin b$ and $p\uhr\rho'\not\Vdash p(\rho')=1_{\dot{\dQ}_{\rho'}}$, then $p'\uhr\rho'\Vdash p'(\rho')\dot{\leq}_{\rho',0} p(\rho')$.
				\item[(ii)] $p'\leq_{\rho,0}p$ if and only if $p'\leq_{\rho}p$ and the set $b$ is empty.
			\end{enumerate} 
		\end{enumerate}
	\end{mydef}
	
	It turns out that this notion of an iteration of Prikry-type forcings preserves the Prikry property. We first prove the statement for the two-step iteration.
	
	\begin{mylem}\label{SuccStep}
		Assume $(\dP,\leq,\leq_0)$ is a Prikry-Type forcing and $(\dot{\dQ},\dot{\leq},\dot{\leq}_0)$ is a $\dP$-name for a Prikry-Type forcing. Then $\dP*\dot{\dQ}$ is a Prikry-type forcing.
	\end{mylem}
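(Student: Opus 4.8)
The plan is to verify the Prikry property of $\dP*\dot{\dQ}$ with respect to the direct extension ordering $\leq_0$ given (as in the successor clause of the definition of the iteration) by $(p',\dot q')\leq_0(p,\dot q)$ iff $p'\leq_0 p$ and $p'\Vdash\dot q'\,\dot{\leq}_0\,\dot q$, by a two-stage application of the Prikry properties of the two factors. Throughout I would use the standard fact that the forcing relation of a two-step iteration factors as $(p,\dot q)\Vdash_{\dP*\dot\dQ}\sigma$ iff $p\Vdash_{\dP}$ ``$\dot q\Vdash_{\dot\dQ}\sigma$'', and correspondingly that, after passing to a $\dP$-generic, any $\dP*\dot\dQ$-name becomes a $\dot\dQ[G]$-name, so that $\sigma$ becomes a statement in the forcing language of $\dot\dQ[G]$. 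So fix a condition $(p,\dot q)\in\dP*\dot\dQ$ and a statement $\sigma$; the goal is to produce $(q',\dot q')\leq_0(p,\dot q)$ deciding $\sigma$.

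First I would work in $V^{\dP}$. Since $\dot\dQ$ is forced to be a Prikry-type forcing, $\dP$ forces that there is a $\dot{\leq}_0$-direct extension of $\dot q$ deciding $\sigma$. By the maximum principle I would fix a single $\dP$-name $\dot q'$ such that $\Vdash_{\dP}$ ``$\dot q'\,\dot{\leq}_0\,\dot q$ and $\dot q'$ decides $\sigma$ over $\dot\dQ$''. Next, back in $V$, consider the statement $\tau$ in the forcing language of $\dP$ asserting ``$\dot q'\Vdash_{\dot\dQ}\sigma$'', and apply the Prikry property of $\dP$ to $p$ and $\tau$ to obtain $q'\leq_0 p$ deciding $\tau$.

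It remains a short case analysis. If $q'\Vdash_{\dP}\tau$, then by the factoring of the iterated forcing relation $(q',\dot q')\Vdash_{\dP*\dot\dQ}\sigma$. If instead $q'\Vdash_{\dP}\lnot\tau$, then, using that $q'$ also forces ``$\dot q'$ decides $\sigma$'', we get $q'\Vdash_{\dP}$ ``$\dot q'\Vdash_{\dot\dQ}\lnot\sigma$'', i.e. $(q',\dot q')\Vdash_{\dP*\dot\dQ}\lnot\sigma$. In either case $(q',\dot q')$ decides $\sigma$; and since $q'\leq_0 p$ while $\Vdash_{\dP}\dot q'\,\dot{\leq}_0\,\dot q$ (so in particular $q'\Vdash_{\dP}\dot q'\,\dot{\leq}_0\,\dot q$), we have $(q',\dot q')\leq_0(p,\dot q)$, which is exactly what is required.

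I expect no genuine obstacle here: the argument is a routine diagram chase through the iteration. The only delicate points are bookkeeping ones, namely the legitimacy of treating $\sigma$ as a statement in the $\dot\dQ$-language after forcing with $\dP$ (equivalently, the factoring $(p,\dot q)\Vdash_{\dP*\dot\dQ}\sigma\iff p\Vdash_{\dP}(\dot q\Vdash_{\dot\dQ}\sigma)$), and the use of the maximum principle to replace the $\dP$-forced existential statement by an actual $\dP$-name $\dot q'$, so that the second application of the Prikry property can be made to the single statement $\tau$. The substantive work in this section is deferred to the limit stages of the iteration, where the Easton support and the ``finite exceptional set $b$'' in the definition of $\leq_{\rho}$ versus $\leq_{\rho,0}$ must be handled.
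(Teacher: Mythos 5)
Your proof is correct and follows essentially the same route as the paper's: apply the maximum principle to the Prikry property of $\dot{\dQ}$ to obtain a name $\dot q'$ forced to directly extend $\dot q$ and decide $\sigma$, then use the Prikry property of $\dP$ to decide the statement ``$\dot q'\Vdash\sigma$'' by a direct extension of $p$, and conclude by the same case analysis. The bookkeeping points you flag (factoring of the forcing relation and the maximum principle) are exactly the ones the paper relies on implicitly.
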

	
	\begin{proof}
		Let $\sigma$ be a statement in the forcing language of $\dP*\dot{\dQ}$ and let $(p,\dot{q})\in\dP*\dot{\dQ}$. Because $\dot{\dQ}$ is forced to be a Prikry-type forcing we can apply the maximum principle and find $\dot{q}'$ such that $(p,\dot{q}')\leq_0(p,\dot{q})$ and $p$ forces that $\dot{q}'$ decides $\sigma$. Now we can find $p'\leq_0 p$ which decides the statement ``$\dot{q}'\Vdash\sigma$''. Then either $p'\Vdash\dot{q}'\Vdash\sigma$ and so $(p',\dot{q}')\Vdash\sigma$ or $p'\Vdash\dot{q}'\not\Vdash\sigma$ in which case it follows that $p'\Vdash\dot{q}'\Vdash\neg\sigma$ since $\dot{q}'$ is forced to decide $\sigma$ and thus $(p',\dot{q}')\Vdash\neg\sigma$.
	\end{proof}
	
	We now show that the Prikry property is preserved by Easton-support Magidor iterations (see \cite[Lemma 2]{GitikHandbook} for the full support case):
	
	\begin{mylem}\label{PrikryIter}
		Let $\rho$ be an ordinal. Assume that $((\dP_{\alpha},\leq_{\alpha},\leq_{\alpha,0}),(\dot{\dQ}_{\alpha},\dot{\leq}_{\alpha},\dot{\leq}_{\alpha,0}))_{\alpha<\rho}$ is an Easton-support Magidor iteration of Prikry-type forcings of length $\rho$. Then $\dP_{\rho}$ has the Prikry property.
	\end{mylem}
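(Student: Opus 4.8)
The plan is to induct on the length $\rho$. The successor case is immediate from Lemma \ref{SuccStep}: if $\rho=\rho'+1$ then $\dP_{\rho}=\dP_{\rho'}*\dot{\dQ}_{\rho'}$ and, by the first clause of the definition, the direct extension order on $\dP_{\rho}$ is exactly the one to which Lemma \ref{SuccStep} applies, so since $\dP_{\rho'}$ has the Prikry property (induction hypothesis) and $\dot{\dQ}_{\rho'}$ is forced to be Prikry-type, so does $\dP_{\rho}$. So fix a limit $\rho$, a condition $p\in\dP_{\rho}$ and a statement $\sigma$; the goal is to find $q\leq_{\rho,0}p$ deciding $\sigma$, i.e.\ a $q$ which is a direct extension of $p$ at every nontrivial coordinate and which decides $\sigma$. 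The one structural fact that drives everything is that every $r\leq_{\rho}p$ has only \emph{finitely many} non-direct coordinates, so the ``non-direct part'' of a condition is bounded below $\rho$ and plays the role of a Prikry stem. Following the two cases of the limit clause of the definition, one argues as follows.

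\emph{Inverse limit case} (no support restriction at $\rho$). This is the full support argument of \cite[Lemma 2]{GitikHandbook}, which adapts verbatim; I only recall its shape. Fix an increasing cofinal sequence $\langle\rho_{\xi}\mid\xi<\cf(\rho)\rangle$ in $\rho$, chosen so that each block $[\rho_{\xi},\rho_{\xi+1})$ carries an iteration of length $<\rho$ (a single iterand if $\rho=\omega$). Build conditions $q_{\xi}\in\dP_{\rho_{\xi}}$ with $q_{0}=p\uhr\rho_{0}$, $q_{\xi}\uhr\rho_{\eta}=q_{\eta}$ for $\eta<\xi$, and $q_{\xi+1}$ a direct extension of $p$ on the block $[\rho_{\xi},\rho_{\xi+1})$ over $\dP_{\rho_{\xi}}$; at stage $\xi$, shrink the block -- using the Prikry property of the block iteration (covered by the induction, or simply the hypothesis on the iterand when the block is a single step) together with the maximum principle -- so that every finitely-supported non-direct configuration inside $[\rho_{\xi},\rho_{\xi+1})$ has its effect on $\sigma$ already settled; this is the exact analogue of shrinking measure-one sets in the proof of the Prikry property for Prikry forcing. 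Since only coordinates $\geq\rho_{\xi}$ are touched at stage $\xi$, the union $q:=\bigcup_{\xi}q_{\xi}$ is a condition (here the lack of a support restriction is used) and $q\leq_{\rho,0}p$. If now $r\leq_{\rho}q$ decides $\sigma$, its finite set of non-direct coordinates lies below some $\rho_{\xi}$, so $r\uhr\rho_{\xi}$ is one of the configurations anticipated at stage $\xi$, and unwinding the construction forces $q$ itself to decide $\sigma$.

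\emph{Direct limit case} ($\rho$ inaccessible with $|\dP_{\alpha}|<\rho$ for all $\alpha<\rho$). Now every condition has bounded support, so there is no fusion to run; instead we use that $\dP_{\rho}$ satisfies a chain condition at such $\rho$ (this is the reason for iterating with Easton rather than full support, as noted above). Let $A$ be a maximal antichain below $p$ consisting of conditions that decide $\sigma$. By the chain condition $|A|<\rho$, so there is $\beta<\rho$ bounding the supports of $p$ and of every member of $A$; then $p\in\dP_{\beta}$, $A\subseteq\dP_{\beta}$, and $A$ remains a maximal antichain below $p$ in $\dP_{\beta}$. Since all these conditions are trivial on $[\beta,\rho)$, for such a condition $a$ we have $a\Vdash_{\dP_{\rho}}\sigma$ iff $a$ forces, in $\dP_{\beta}$, that the trivial condition of the tail iteration forces $\sigma$; call this $\dP_{\beta}$-statement $\sigma^{+}$, and let $\sigma^{-}$ be the analogous statement with $\neg\sigma$. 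Apply the induction hypothesis to $\dP_{\beta}$: first below $p$ to the statement $\sigma^{+}\vee\sigma^{-}$, then, if it is forced, below the result to $\sigma^{+}$. If we end up with $q\leq_{\beta,0}p$ forcing $\sigma^{+}$ or forcing $\sigma^{-}$, then $q$ (with trivial tail) decides $\sigma$ in $\dP_{\rho}$ and $q\leq_{\rho,0}p$, as required. Otherwise we reach $q\leq_{\beta,0}p$ forcing $\neg\sigma^{+}\wedge\neg\sigma^{-}$, i.e.\ forcing that some tail condition forces $\sigma$ and some tail condition forces $\neg\sigma$; a common extension of $q$ with an appropriate member of $A$, formed inside $\dP_{\beta}$, would then force one of $\sigma^{+},\sigma^{-}$ together with its negation, contradicting the maximality of $A$.

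The hard part is the bookkeeping hidden in the inverse limit case, where one must run the fusion so that a non-direct configuration whose support reaches into earlier, already-frozen blocks is still captured; this forces the shrinking at each stage to be carried out not against $\sigma$ alone but against the entire family of tail-statements that such configurations can generate, and one must check that these demands cohere across all stages -- this is precisely what is established in \cite[Lemma 2]{GitikHandbook}. By contrast, the new points for the Easton-support setting are minor: that $\dP_{\rho}$ enjoys the relevant chain condition at inaccessible $\rho$, that the reduction of $\sigma$ to the $\dP_{\beta}$-statements $\sigma^{+},\sigma^{-}$ is legitimate, and, throughout, that unions of blockwise direct extensions are again conditions and direct extensions and that tail segments of the iteration are again Magidor iterations of the same type -- all of which are routine.
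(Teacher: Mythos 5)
Your successor step and your inverse-limit step follow the paper: both defer to (and the paper essentially reproduces) Gitik's full-support argument, with the paper phrasing it as a proof by contradiction that propagates the statements $\neg\sigma_\beta$ along the blocks and uses the finiteness of the non-direct support to pass through limits. Your sketch of that case is vaguer than the paper's (you describe a forward fusion ``anticipating all non-direct configurations'' rather than the contradiction argument), but since the substance is in the cited \cite[Lemma 2]{GitikHandbook} in both cases, I will not press on that.

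The genuine problem is your direct-limit case. You take a maximal antichain $A$ below $p$ of deciding conditions and invoke ``the chain condition'' to get $|A|<\rho$ and hence a single $\beta<\rho$ capturing $p$ and all of $A$. But the definition takes the bounded-support limit at \emph{every} inaccessible $\rho$ with $|\dP_\alpha|<\rho$, not only at Mahlo ones, and the $\rho$-cc of an Easton-support iteration is a theorem only for Mahlo $\rho$ (it needs direct limits to be taken stationarily often below $\rho$); at an inaccessible non-Mahlo $\rho$ the antichain $A$ may have size $\rho$ and unbounded supports, and then there is no $\beta$ for which your reduction of $\sigma$ to the $\dP_\beta$-statements $\sigma^{+},\sigma^{-}$ makes sense. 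The paper never proves any chain condition (it only mentions cc at Mahlo cardinals as motivation for the support), and its argument shows none is needed: since $p\in\dP_{\rho'}$ is trivial on $[\rho',\rho)$, one takes (by density alone, via the maximum principle) a $\dP_{\rho'}$-name $\tau$ for a tail condition deciding $\sigma$, then a direct extension $p'\leq_{\rho',0}p$ deciding the $\dP_{\rho'}$-statement ``$\tau\Vdash\sigma$''; the pair $(p',\tau)$ then decides $\sigma$, and it is a $\leq_{\rho,0}$-extension of $p$ because the definition of $\leq_{\rho,0}$ imposes the direct-extension requirement only at coordinates where $p$ is nontrivial. You should replace your antichain argument by this one (or restrict your cc claim to Mahlo $\rho$ and supply a separate argument for inaccessible non-Mahlo $\rho$, which would amount to the same thing).
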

	
	\begin{proof}
		We prove the statement by induction on $\rho$. In case $\rho=1$, it is easy and the successor step is Lemma \ref{SuccStep}. So assume that $\rho$ is a limit ordinal and the statement holds for all $\rho'<\rho$.
		
		For the first case, assume $\rho$ is not inaccessible or that there is $\alpha<\rho$ such that $|\dP_{\alpha}|\geq\rho$ (in other words, assume we are taking a full support limit at $\rho$). Let $p\in\dP_{\rho}$ and let $\sigma$ be a statement in the forcing language. Fix any unbounded and continuous function $f\colon\cf(\rho)\to\rho$ such that either $|\dP_{\alpha}|\geq\rho$ for every $\alpha\in\im(f)$ or no inaccessible cardinal is in the image of $f$ (again, in other words, assume we are taking full support limits at every point in the image of $f$). For any $\rho'<\rho''\leq\rho$, let $(\dot{\dP}_{\rho',\rho''},\dot{\leq}_{\rho',\rho''},\dot{\leq}_{(\rho',\rho''),0})$ be a $\dP_{\rho'}$-name for a forcing poset such that $\dP_{\rho''}\cong\dP_{\rho'}*\dot{\dP}_{\rho',\rho''}$. We regard $\dP_{\rho}$ as the full support limit of $(\dP_{f(\beta)},\dot{\dP}_{f(\beta),f(\beta+1)})_{\beta<\cf(\rho)}$. Moreover, for any $\beta<\cf(\rho)$, $(\dot{\dP}_{f(\beta),f(\beta+1)},\dot{\leq}_{f(\beta),f(\beta+1)},\dot{\leq}_{(f(\beta),f(\beta+1)),0})$ is forced to be forcing equivalent to an Easton-support Magidor iteration of Prikry-type forcings: Whenever $\nu\in(f(\beta),f(\beta+1))$ is inaccessible in the ground model and $|\dP_{\alpha}|<\nu$ for every $\alpha<\nu$, $\nu$ remains inaccessible in any extension by $\dP_{f(\beta)}$, since $|\dP_{f(\beta)}|<\nu$. Ergo, $(\dot{\dP}_{f(\beta),f(\beta+1)},\dot{\leq}_{f(\beta),f(\beta+1)},\dot{\leq}_{(f(\beta),f(\beta+1)),0})$ is forced to be dense with respect to both orderings in the Easton-support Magidor iteration of Prikry-type forcings with iterands ($(\dot{\dQ}_{\alpha})_{\alpha\in[f(\beta),f(\beta+1))}$ (for more details, see the discussion before \cite[Lemma 6.15]{GitikHandbook}). It follows from the inductive hypothesis that any $(\dot{\dP}_{f(\beta),f(\beta+1)},\dot{\leq}_{f(\beta),f(\beta+1)},\dot{\leq}_{(f(\beta),f(\beta+1)),0})$ is forced to have the Prikry property.
		
		So assume toward a contradiction that no $p'\leq_{\rho,0}p$ decides $\sigma$. We can assume without loss of generality that $p\uhr f(\beta)\Vdash p(f(\beta))\neq 1_{\dot{\dP}_{f(\beta),f(\beta+1)}}$ for all $\beta\in\cf(\rho)$ (because we are taking full support limits at every point in the image of $f$). We define by recursion on $\beta<\cf(\rho)$ a condition $p^*\uhr f(\beta)\in\dP_{f(\beta)}$ such that $p^*\uhr f(\beta)\leq_{f(\beta),0} p\uhr f(\beta)$ and
		$$p^*\uhr f(\beta)\Vdash_{\dP_{f(\beta)}}\neg\sigma_{\beta}$$
		where
		$$\sigma_{\beta}:=\exists q\in\dP_{f(\beta),\rho}(q\dot{\leq}_{(f(\beta),\rho),0}p\uhr[f(\beta),\rho)\wedge q||_{\dot{\dP}_{f(\beta),\rho}}\sigma)$$
		Suppose $p^*\uhr f(\beta)$ has been defined. Let $p^*(f(\beta))$ be such that
		$$p^*\uhr f(\beta)\Vdash \left(p^*(f(\beta))\dot{\leq}_{f(\beta),0}\;p(f(\beta))\text{ and }p^*(f(\beta))||_{\dP_{f(\beta),f(\beta+1)}}\sigma_{f(\beta+1)}\right)$$
		This is possible as $(\dot{\dP}_{f(\beta),f(\beta+1)},\dot{\leq}_{f(\beta),f(\beta+1)},\dot{\leq}_{(f(\beta),f(\beta+1)),0})$ is forced to have the Prikry property. We claim that $p^*\uhr f(\beta+1)\Vdash\neg\sigma_{f(\beta+1)}$, i.e. $p^*\uhr f(\beta)\Vdash p^*(f(\beta))\Vdash\neg\sigma_{f(\beta+1)}$. Otherwise there is $r\leq p^*\uhr f(\beta)$ forcing $p^*(f(\beta))\not\Vdash\neg\sigma_{f(\beta+1)}$, so (because $p^*(f(\beta))$ is forced to decide $\sigma_{f(\beta+1)}$) $r\Vdash p^*(f(\beta))\Vdash\sigma_{f(\beta+1)}$. But then we can take (by the maximum principle) $q\in\dP_{f(\beta+1)}$ which is forced to witness $\dP_{f(\beta+1)}$ and $p^*(f(\beta))^{\frown}q$ is forced by $r\leq p^*\uhr f(\beta)$ to witness $\sigma_{f(\beta)}$, a contradiction.
		
		Suppose $p^*\uhr f(\beta)$ has been defined for all $\beta<\beta'$ and $\beta'$ is a limit. We need to show that $p^*\uhr f(\beta')\Vdash\neg\sigma_{f(\beta')}$. Otherwise there is $r\leq p^*\uhr f(\beta')$ forcing $\sigma_{f(\beta')}$, witnessed by some $\dot{q}\in\dP_{f(\beta'),\rho}$. But then there is $\beta<\beta'$ such that $r\uhr[f(\beta),f(\beta'))\leq_{(f(\beta),f(\beta')),0}p^*\uhr f(\beta')$ (recall that $p\uhr f(\beta)\Vdash p(f(\beta))\neq 1_{\dot{\dP}_{f(\beta),f(\beta+1)}}$ for every $\beta\in\cf(\rho)$), so only finitely many non-direct extensions are allowed). However, then $(r\uhr[f(\beta),f(\beta')))^{\frown}\dot{q}$ is forced by $r\uhr f(\beta)\leq p^*\uhr f(\beta)$ to witness $\sigma_{f(\beta)}$, a contradiction.
		
		Lastly, simply let $r\leq p^*$ decide $\sigma$. Then we obtain a contradiction just like in the limit step before, since $r$ non-directly extends $p^*$ only finitely often.
		
		For the second case, assume that $\rho$ is inaccessible. Let $p\in\dP_{\rho}$ and let $\sigma$ be a statement in the forcing language. Let $\rho'<\rho$ be such that $p\in\dP_{\rho'}$.  Then $p\Vdash_{\dP_{\rho'}}\exists\tau(\tau||_{\dot{\dP}_{\rho',\rho}}\sigma)$. By the maximum principle, fix such a $\tau$. Let $p'\leq_{\rho',0}p$ such that $p'$ decides whether $\tau\Vdash_{\dot{\dP}_{\rho',\rho}}\sigma$. Then, as in the case of the two-step iteration, $(p',\tau)\leq_{\rho,0}p$ and $(p',\tau)$ decides $\sigma$ (here we use that we can take arbitrarily many non-direct extensions outside of the support).
	\end{proof}
	
	Note that if $((\dP_{\alpha},\leq_{\alpha},\leq_{\alpha,0}),(\dot{\dQ}_{\alpha},\dot{\leq}_{\alpha},\dot{\leq}_{\alpha,0}))_{\alpha<\rho}$ is an Easton-support Magidor iteration of Prikry-type forcings of length $\rho$, the sequence of the direct orderings $((\dP_{\alpha},\leq_{\alpha,0}),(\dot{\dQ}_{\alpha},\dot{\leq}_{\alpha,0}))$ is not equal to the Easton-support iteration of the forcings $((\dP_{\alpha},\leq_{\alpha,0}),(\dot{\dQ}_{\alpha},\dot{\leq}_{\alpha,0}))_{\alpha<\rho}$, since we are forcing over $(\dP_{\alpha},\leq_{\alpha})$ and not $(\dP_{\alpha},\leq_{\alpha,0})$. Despite this, since $\leq_{\alpha}$ refines $\leq_{\alpha,0}$ by assumption, the usual proof of the iterability of ${<}\,\mu$-closure works in this context as well and we have:
	
	\begin{mylem}\label{DirectClosureLimit}
		Let $((\dP_{\alpha},\leq_{\alpha},\leq_{\alpha,0}),(\dot{\dQ}_{\alpha},\dot{\leq}_{\alpha},\dot{\leq}_{\alpha,0}))_{\alpha<\rho}$ be an Easton-support Magidor iteration of Prikry-type forcings and $\mu$ a cardinal which is below the first inaccessible. If for all $\alpha<\rho$, $\dP_{\alpha}$ forces $(\dot{\dQ}_{\alpha},\dot{\leq}_{\alpha,0})$ to be ${<}\,\check{\mu}$-closed, then for all $\alpha<\rho$, $(\dP_{\alpha},\leq_{\alpha,0})$ is ${<}\,\mu$-closed.
	\end{mylem}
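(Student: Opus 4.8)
The plan is to run the standard ``coordinate‑by‑coordinate'' construction of a lower bound, taking care of the two features special to the present framework: the Easton support (which at inaccessible stages forces a condition to be trivial on a tail, yet lets a direct extension turn a previously trivial coordinate into a nontrivial one) and the fact that in the iteration we work with the orderings $\leq_{\alpha}$ rather than the direct orderings $\leq_{\alpha,0}$. It suffices to show that $(\dP_{\rho},\leq_{\rho,0})$ is ${<}\,\mu$‑closed, since for $\alpha<\rho$ the poset $\dP_{\alpha}$ is, by the inductive form of the definition, the underlying set of an Easton‑support Magidor iteration of Prikry‑type forcings of length $\alpha$. So fix a limit ordinal $\theta<\mu$ and a $\leq_{\rho,0}$‑decreasing sequence $\langle p_{\xi}:\xi<\theta\rangle$ (the case of a successor $\theta$ being trivial), and recall the elementary fact, proved by induction on $\beta\leq\rho$, that $\leq_{\beta,0}$ refines $\leq_{\beta}$.

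First I would build a function $p^{*}$ on $\rho$ while simultaneously verifying, by recursion on $\beta\leq\rho$, that $p^{*}\uhr\beta\in\dP_{\beta}$ and $p^{*}\uhr\beta\leq_{\beta,0}p_{\xi}\uhr\beta$ for every $\xi<\theta$. At a successor $\beta=\gamma+1$: by the inductive invariant at $\gamma$, $p^{*}\uhr\gamma$ is a common $\leq_{\gamma}$‑lower bound of the $p_{\xi}\uhr\gamma$, and for all $\xi<\xi'<\theta$ one has $p^{*}\uhr\gamma\Vdash p_{\xi'}(\gamma)\,\dot{\leq}_{\gamma,0}\,p_{\xi}(\gamma)$: if $p_{\xi}\uhr\gamma\not\Vdash p_{\xi}(\gamma)=1_{\dot{\dQ}_{\gamma}}$ this is the ``$b=\emptyset$''-clause of $p_{\xi'}\leq_{\rho,0}p_{\xi}$ at coordinate $\gamma$, forced a fortiori by the stronger condition $p^{*}\uhr\gamma$, and otherwise it holds because $1_{\dot{\dQ}_{\gamma}}$ is the $\dot{\leq}_{\gamma,0}$‑maximum. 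Hence $p^{*}\uhr\gamma$ forces $\langle p_{\xi}(\gamma):\xi<\theta\rangle$ to be $\dot{\leq}_{\gamma,0}$‑decreasing of length ${<}\,\mu$, and, using that $\dP_{\gamma}$ forces $(\dot{\dQ}_{\gamma},\dot{\leq}_{\gamma,0})$ to be ${<}\,\mu$‑closed, I let $p^{*}(\gamma)$ be a $\dP_{\gamma}$‑name (forced by $p^{*}\uhr\gamma$) for a $\dot{\leq}_{\gamma,0}$‑lower bound of this sequence — and simply $p^{*}(\gamma)=1_{\dot{\dQ}_{\gamma}}$ when every $p_{\xi}(\gamma)$ is forced trivial. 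The invariant at $\gamma+1$ is then immediate from the definition of $\leq_{\gamma+1,0}$.

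At a limit $\beta$ I would set $p^{*}\uhr\beta=\bigcup_{\gamma<\beta}p^{*}\uhr\gamma$; the clauses $p^{*}\uhr\gamma\in\dP_{\gamma}$ for $\gamma<\beta$ come from the recursion, so for $p^{*}\uhr\beta\in\dP_{\beta}$ the only thing to check is the Easton clause when $\beta$ is inaccessible with $|\dP_{\gamma}|<\beta$ for all $\gamma<\beta$. In that case each $p_{\xi}\uhr\beta$ is trivial above some $\beta_{\xi}<\beta$, so on every coordinate in $(\beta^{*},\beta)$, where $\beta^{*}:=\sup_{\xi<\theta}\beta_{\xi}$, all the $p_{\xi}$ are forced trivial and the recursion has set $p^{*}$ equal to $1_{\dot{\dQ}_{\gamma}}$ there; and $\beta^{*}<\beta$ since $\beta$ is regular and $\theta<\mu\leq\beta$, the last inequality being exactly where the hypothesis that $\mu$ lies below the first inaccessible enters. (If $\beta$ is not inaccessible, or $|\dP_{\gamma}|\geq\beta$ for some $\gamma<\beta$, full support is allowed and there is nothing to check.) Finally $p^{*}\uhr\beta\leq_{\beta,0}p_{\xi}\uhr\beta$: the clause ``$p^{*}\uhr\gamma\leq_{\gamma}p_{\xi}\uhr\gamma$ for all $\gamma<\beta$'' follows from the invariant together with ``$\leq_{\gamma,0}$ refines $\leq_{\gamma}$'', and the finite set $b$ may be taken empty, since on each coordinate $\gamma$ with $p_{\xi}\uhr\gamma\not\Vdash p_{\xi}(\gamma)=1_{\dot{\dQ}_{\gamma}}$ the name $p^{*}(\gamma)$ was chosen so that $p^{*}\uhr\gamma\Vdash p^{*}(\gamma)\,\dot{\leq}_{\gamma,0}\,p_{\xi}(\gamma)$. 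Taking $\beta=\rho$ yields $p^{*}\leq_{\rho,0}p_{\xi}$ for all $\xi<\theta$, as required.

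The only genuine obstacle — and the sole place the hypothesis on $\mu$ is used — is the Easton support at an inaccessible stage: one needs the supports of the $p_{\xi}$ to be cofinally bounded below the inaccessible, which is guaranteed precisely because the length $\theta<\mu$ of the sequence does not reach an inaccessible cardinal. The remaining subtlety, that a direct extension can activate a trivial coordinate, is harmless: once a coordinate is nontrivial it is governed by the ``$b=\emptyset$''-clause, and where it stays trivial the value $1_{\dot{\dQ}_{\gamma}}$ is $\dot{\leq}_{\gamma,0}$‑above anything one could put there, so the per‑coordinate sequences really are $\dot{\leq}_{\gamma,0}$‑decreasing and the closure of the iterands applies.
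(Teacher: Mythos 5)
Your argument is correct and is precisely the ``usual proof of the iterability of ${<}\,\mu$-closure'' that the paper invokes without writing out: a coordinate-by-coordinate lower bound, with the Easton clause at an inaccessible $\beta$ handled by bounding the $\theta<\mu<\beta$ many supports below $\beta$, and with the activated/trivial-coordinate dichotomy resolved exactly as you describe. Nothing further is needed.
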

	
	We note another easy result for later. It is clear that whenever $(\dP,\leq,\leq_0)$ is a Prikry-type forcing and the direct extension ordering $\leq_0$ is ${<}\,\mu$-closed, we can decide names for ordinals ${<}\,\gamma$ using direct extensions whenever $\gamma<\mu$. In most cases (e.g. when $\dP$ is Prikry forcing at a measurable cardinal $\mu$) this is the best we can hope for. However, in our case $\mu$ will often be a successor cardinal. In this case, we have the following:
	
	\begin{mylem}\label{PrikryPumpUp}
		Let $(\dP,\leq,\leq_0)$ be a Prikry-type forcing and $\mu$ a regular cardinal such that $(\dP,\leq_0)$ is ${<}\,\mu$-closed. Then:
		\begin{enumerate}
			\item Whenever $\gamma<\mu$, $p\in\dP$ and $\tau$ is a $\dP$-name with $p\Vdash\tau<\check{\gamma}$, there is $q\leq_0p$ and $\alpha$ such that $q\Vdash\tau=\check{\alpha}$.
			\item Whenever $\gamma,\gamma'<\mu$, $p\in\dP$ and $\tau$ is a $\dP$-name with $p\Vdash\tau\colon\check{\gamma}\to\check{\gamma}'$, there is $q\leq_0p$ and $f\colon\gamma\to\gamma'$ such that $q\Vdash\tau=\check{f}$.
			\item Assume that $\mu$ is not a strong limit. Whenever $p\in\dP$ and $\tau$ is a $\dP$-name with $p\Vdash\tau<\check{\mu}$, there is $q\leq_0p$ and $\alpha$ such that $q\Vdash\tau=\check{\alpha}$.
		\end{enumerate}
	\end{mylem}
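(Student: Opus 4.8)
The plan is to prove the three items successively, each from the previous, using nothing beyond the Prikry property of $(\dP,\leq,\leq_0)$ and the ${<}\,\mu$-closure of $\leq_0$, and using freely that a $\leq_0$-extension is in particular a $\leq$-extension (so anything forced by a condition is forced by its $\leq_0$-extensions). For (1) I would run a recursion of length at most $\gamma$ producing a $\leq_0$-decreasing sequence $(p_\beta)_\beta$ with $p_0=p$ and with $p_\beta\Vdash\check\beta\leq\tau$ maintained at every stage. Given $p_\beta$, apply the Prikry property to the sentence ``$\tau=\check\beta$'' to obtain $p_{\beta+1}\leq_0 p_\beta$ deciding it: if $p_{\beta+1}\Vdash\tau=\check\beta$, halt with $q=p_{\beta+1}$ and $\alpha=\beta$; otherwise $p_{\beta+1}\Vdash\check{\beta+1}\leq\tau$ and continue. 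At a limit $\lambda\leq\gamma$ the sequence built so far has length $\lambda<\mu$, so ${<}\,\mu$-closure yields a $\leq_0$-lower bound $p_\lambda$, which forces $\check\lambda\leq\tau$. The recursion must terminate before $\gamma$: running it through all of $\gamma$ would give a condition $\leq_0 p$ forcing $\check\gamma\leq\tau$, contradicting $p\Vdash\tau<\check\gamma$ (both the limit and successor cases of $\gamma$ are immediate). The condition and ordinal found when it halts are as required.

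Item (2) follows by iterating (1). Build a $\leq_0$-decreasing sequence $(q_\xi)_{\xi\leq\gamma}$ with $q_0=p$: at a successor step apply (1), below $q_\xi$, to a name for the ordinal $\tau(\check\xi)<\check{\gamma'}$, obtaining $q_{\xi+1}\leq_0 q_\xi$ and some $\beta_\xi<\gamma'$ with $q_{\xi+1}\Vdash\tau(\check\xi)=\check\beta_\xi$; at a limit $\lambda\leq\gamma$ use ${<}\,\mu$-closure, legitimate since $\lambda\leq\gamma<\mu$. Then $q:=q_\gamma$ forces $\tau(\check\xi)=\check\beta_\xi$ for every $\xi<\gamma$, so $q\Vdash\tau=\check f$ with $f:=(\beta_\xi)_{\xi<\gamma}$.

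For (3), since $\mu$ is not a strong limit, fix $\theta<\mu$ with $2^\theta\geq\mu$ and, in $V$, an injection $\iota\colon\mu\to{}^{\theta}2$. Choose a name $\tau'$ with $p\Vdash\tau'=\check\iota(\tau)$; then $p\Vdash\tau'\colon\check\theta\to\check 2$, so by (2) applied below $p$ (with $\gamma=\theta$, $\gamma'=2$) there are $q\leq_0 p$ and $g\in{}^{\theta}2\cap V$ with $q\Vdash\tau'=\check g$. Since $q\leq p\Vdash\tau\in\check\mu$, the condition $q$ forces $\check g=\check\iota(\tau)\in\im(\check\iota)$; as ``$g\in\im(\iota)$'' concerns only ground-model objects and is therefore absolute, $g=\iota(\alpha)$ for some $\alpha<\mu$ in $V$, and now $q\Vdash\check\iota(\tau)=\check\iota(\check\alpha)$, whence $q\Vdash\tau=\check\alpha$ because $\iota$ is injective.

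I do not expect any genuine obstacle here; the delicate points are purely bookkeeping. Every appeal to ${<}\,\mu$-closure must be made for a chain of length $<\mu$, which is guaranteed because all the recursions have length $\leq\gamma<\mu$ (respectively $\leq\theta<\mu$), and the termination argument in (1) should be written so as to cover both a limit and a successor $\gamma$. The hypothesis that $\mu$ is regular is used only for convenience, e.g.\ to see at a glance that $\gamma+1<\mu$.
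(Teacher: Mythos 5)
Your proof is correct and follows exactly the route the paper intends: the paper's own proof consists of the single word ``Easy'' plus the hint, for (3), that ordinals below $\mu$ can be coded as functions from some $\nu<\mu$ into $2$ since $\mu$ is not a strong limit, which is precisely your item (3). Your write-up just supplies the routine Prikry-property-plus-closure recursions that the paper leaves to the reader.
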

	
	\begin{proof}
		Easy. For (3) we can use that ordinals below $\mu$ correspond to functions from $\nu$ to $2$ for some $\nu<\mu$ because $\mu$ is not a strong limit.
	\end{proof}
	
	\section{The Laver-Ideal Property}
	
	The idea of using Namba forcing associated to certain ideals goes as far back as work of Bukovsk\'y and Copl\'akov\'a-Hartov\'a (see \cite{BukovskyCoplakovaMinimal}). Of special interest is the usage of ideals which cause the Namba forcing to behave similarly to Prikry forcing by having a closed dense subset of the collection of positive sets. This idea was recently used by Cox and Krueger (see \cite{CoxKruegerNamba}) in order to obtain stationarily many models which are weakly guessing but not internally approachable.
	
	\begin{mydef}
		Let $\mu\leq\kappa$ be regular cardinals. We say that $\LIP(\mu,\kappa^+)$ holds if there is $I$, a ${<}\,\kappa^+$-complete normal ideal over $\kappa^+$ such that there is $B\subseteq I^+$ which is dense in $I^+$ and ${<}\,\mu$-closed.
	\end{mydef}

	The consistency of this property is attributed to unpublished work of Laver (see \cite[page 291]{GalvinJechMagidorIdealGame}). A published proof of almost the same property was given by Galvin, Jech and Magidor (see \cite[Theorem 4]{GalvinJechMagidorIdealGame}). We will slightly adapt their proof because we want the set $B$ to be ${<}\,\mu$-closed and not merely ${<}\,\mu$-strategically closed. The Laver-Ideal property also occurs in work of Shelah (see \cite[Chapter X, Def. 4.10]{ShelahProperImproper}).

	\begin{mylem}\label{LIPLemma}
		Let $\nu$ be regular and $\kappa>\nu$ measurable. Then $\Coll(\nu,<\kappa)$ forces $\LIP(\nu,\nu^+)$. Moreover, the ideal witnessing $\LIP(\nu,\nu^+)$ concentrates on ordinals of cofinality $\nu$.
	\end{mylem}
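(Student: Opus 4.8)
The plan is to run the classical ``Laver-ideal'' construction via a generic ultrapower, with the tail quotient taken to be a Levy collapse, so that the resulting dense set of positive sets is honestly ${<}\nu$-closed rather than merely ${<}\nu$-strategically closed (this is where the argument must depart from Galvin--Jech--Magidor).

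Fix a normal measure $U$ on $\kappa$ with ultrapower embedding $j\colon V\to M\cong\mathrm{Ult}(V,U)$, and write $\dP=\Coll(\nu,<\kappa)$ and $\dQ=\Coll(\nu,[\kappa,j(\kappa)))$. Since ${}^{\kappa}M\subseteq M$ and $|\dP|=\kappa$, we have $j(\dP)=\Coll(\nu,<j(\kappa))\cong\dP\times\dQ$, the poset $\dQ$ is ${<}\nu$-closed, and (as $j(\kappa)$ is inaccessible in $M$) $\dQ$ has size $j(\kappa)$ there. Let $G$ be $\dP$-generic over $V$; since $j$ fixes $\dP$ pointwise, for any $\dQ$-generic $H$ over $M[G]$ we have $G\subseteq G*H$, so $j$ lifts to $\hat\jmath\colon V[G]\to M[G*H]$. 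Working in $V[G]$, define an ideal $I$ on $\kappa=(\nu^+)^{V[G]}$ by putting $S\in I^+$ iff some $q\in\dQ$ forces over $M[G]$ that $\check\kappa\in\hat\jmath(S)$; a routine argument with nice names (using that $j$ fixes conditions of $\dP$) shows this is independent of the name chosen for $S$, so $I$ is genuinely definable in $V[G]$.

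Next I would check that $I$ is a proper, uniform, normal, ${<}\nu^+$-complete ideal concentrating on $\cof(\nu)$. Properness and uniformity are immediate, since $\hat\jmath$ fixes every bounded subset of $\kappa$, which therefore omits $\kappa$. For completeness and normality one exploits that ``$S\in I$'' is the strong statement ``$1_\dQ\Vdash\check\kappa\notin\hat\jmath(S)$'': as $\crit(\hat\jmath)=\kappa$, $\hat\jmath$ commutes with unions of length $<\kappa$ and with diagonal unions, so any such combination of members of $I$ is again forced by $1_\dQ$ to omit $\kappa$. For concentration, $\hat\jmath(\{\alpha<\kappa:\cf^{V[G]}(\alpha)\neq\nu\})=\{\alpha<j(\kappa):\cf^{M[G*H]}(\alpha)\neq\nu\}$, and $\cf^{M[G*H]}(\kappa)=\nu$: indeed $\kappa$ is regular in $M$ and $\Coll(\nu,<j(\kappa))$ is ${<}\nu$-closed over $M$, so the cofinality of $\kappa$ cannot drop below $\nu$, while the factor $\Coll(\nu,\kappa)$ forces $|\kappa|=\nu$. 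Hence $\kappa$ never lies in that image and $\{\alpha<\kappa:\cf^{V[G]}(\alpha)\neq\nu\}\in I$.

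The heart of the proof is producing the dense ${<}\nu$-closed $B\subseteq I^+$, which amounts to showing that $\mathcal{P}(\kappa)^{V[G]}/I$ densely absorbs $\dQ$. Fix once and for all, for each $\gamma<j(\kappa)$, a function $g_\gamma\colon\kappa\to\kappa$ with $[g_\gamma]_U=\gamma$, chosen so that $g_\gamma(\alpha)>\alpha$ when $\gamma\geq\kappa$. For $q\in\dQ$ set $f_q(\alpha)=\{(i,g_\beta(\alpha),g_{q(i,\beta)}(\alpha)):(i,\beta)\in\dom(q)\}$ and $S_q=\{\alpha<\kappa:f_q(\alpha)\text{ is a condition of }\dP\text{ lying in }G\}$. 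The argument rests on three facts. (i) $[f_q]_U=q$, so $\hat\jmath(f_q)(\kappa)=q$, whence $\kappa\in\hat\jmath(S_q)$ iff $q\in H$, an equivalence forced by $1_\dQ$; in particular $S_q\in I^+$ (witnessed by $q$), and, $\dQ$ being separative, the Boolean value of ``$\check\kappa\in\hat\jmath(S_q)$'' over $M[G]$ is exactly $q$. (ii) If $q'\leq q$ in $\dQ$, then $f_{q'}(\alpha)\supseteq f_q(\alpha)$ for every $\alpha$, so $S_{q'}\subseteq S_q$ because $G$ is a filter. (iii) If $S\in I^+$ is witnessed by $q$, then $q$ forces $\check\kappa$ into $\hat\jmath(S)$ and into $\hat\jmath(S_q)$, so $S_q\setminus S\in I$ by the Boolean-value computation of (i). Now put $B=\{S_q:q\in\dQ\}$: fact (iii) gives density of $B$ in $I^+$, and facts (i)--(ii) give that $B$ is ${<}\nu$-closed, since a decreasing sequence $\langle S_{q_i}\rangle_{i<i^*}$ with $i^*<\nu$ yields, through (i), a decreasing sequence $\langle q_i\rangle$ in the ${<}\nu$-closed poset $\dQ$ whose lower bound $q=\bigcup_i q_i$ satisfies $S_q\subseteq\bigcap_i S_{q_i}$ with $S_q\in B$. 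Thus $I$ and $B$ witness $\LIP(\nu,\nu^+)$. I expect this last construction to be the main obstacle: the coherence needed to make $q\mapsto f_q$ genuinely monotone --- and hence $B$ honestly, not merely strategically, ${<}\nu$-closed --- is exactly what forces fixing the $g_\gamma$ in advance, and one must also confirm that $S_q$ depends only on $q$; the remaining pieces (well-definedness of $I$, the absoluteness bookkeeping among $V[G]$, $M[G]$, and $M[G*H]$, the reflection of completeness and normality through $\hat\jmath$, and the cofinality computation) are routine.
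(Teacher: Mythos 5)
Your overall strategy---defining $I$ as the ideal induced by lifting $j$ along the quotient $\dQ=\Coll(\nu,[\kappa,j(\kappa)))$ and extracting $B$ from conditions of $\dQ$ via pre-chosen representing functions---is a genuinely different route from the paper, which works directly with the ideal dual to the filter generated by $U$ in $V[G]$ and builds $B$ out of sets $A(\tau,p,f,G)$ whose witnessing functions $f$ depend on a \emph{name} $\tau$ for the positive set in question. Your verifications of well-definedness, completeness, normality, concentration on $\cof(\nu)$, and the ${<}\,\nu$-closure of $B$ (via recovering a decreasing sequence of conditions $q_i$ from a decreasing sequence of sets $S_{q_i}$ and taking $q=\bigcup_iq_i$) are all sound, and the two constructions in fact produce the same ideal.

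The gap is in your step (iii). What you actually prove is $S_q\setminus S\in I$, i.e.\ that $B=\{S_q:q\in\dQ\}$ is dense in the quotient $\mathcal{P}(\kappa)/I$. But the definition of $\LIP$ --- and, crucially, the way it is used later --- requires density of $B$ in $(I^+,\subseteq)$: for every $S\in I^+$ there must be $X\in B$ with $X\subseteq S$ \emph{literally} (one shrinks $\osucc_p(t)$ to a member of $B_{|t|}$ to prove $\dL'$ dense in $\dL$, and one takes a member of $B$ below $\bigcap_{\alpha}A_{\alpha}$ in the closure argument; containment mod $I$ is useless there). Your $S_q$ depends only on $q$ and the pre-fixed $g_\gamma$, so it cannot see $S$: already for $S=S_q\setminus N$ with $N$ a nonempty null subset of $S_q$, any $q'$ witnessing $S$ gives only $S_{q'}\setminus S\in I$, and nothing forces $S_{q'}$ to avoid $N$ pointwise. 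This is precisely where the paper lets the witness depend on a name $\tau$ for $S$, choosing $f(\alpha)$ with $p\cup f(\alpha)\Vdash\check\alpha\in\tau$ so that $\{\alpha:f(\alpha)\in G\}\subseteq\tau^G$ holds on the nose. Your argument is repairable within the generic-ultrapower framework: take $B'=\{S_q\setminus N:q\in\dQ,\ N\in I\}$ instead. Literal density then holds because $S_q\cap S=S_q\setminus(S_q\setminus S)\in B'$ whenever $q$ witnesses $S\in I^+$, and ${<}\,\nu$-closure survives because subtracting null sets does not change the Boolean value $\|\check\kappa\in\hat{\jmath}(\cdot)\|$, so a $\subseteq$-decreasing sequence from $B'$ still determines a decreasing sequence $(q_i)$ in $\dQ$, and $S_{\bigcup_iq_i}\setminus\bigcup_iN_i\in B'$ lies below its intersection by the ${<}\,\kappa$-completeness of $I$. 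With that modification your proof goes through.
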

	
	\begin{mybem}
		Actually, work of Shelah (see \cite[Claim 9]{ShelahLargeNormal}) shows that any ideal witnessing even $\LIP(\omega_1,\nu^+)$ must concentrate on ordinals of cofinality $\nu$.
	\end{mybem}
	
	\begin{proof}[Proof of Lemma \ref{LIPLemma}]
		Let $G$ be $\Coll(\nu,<\kappa)$-generic. Let $U$ be a ${<}\,\kappa$-complete ultrafilter over $\kappa$ in $V$. In $V[G]$, let $I$ consist of those $X\subseteq\kappa$ such that $X\cap Y=\emptyset$ for some $Y\in U$. Since $U$ concentrates on regular cardinals $>\nu$, $I$ concentrates on ordinals of cofinality $\nu$.
		
		\begin{myclaim}
			$I$ is ${<}\,\kappa$-complete and normal.
		\end{myclaim}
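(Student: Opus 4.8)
The plan is to exploit the $\kappa$-chain condition of $\Coll(\nu,<\kappa)$ — valid because $\kappa$, being measurable, is inaccessible — in order to pull any sequence of members of $U$ that arises in $V[G]$ back to $V$, where the completeness and normality of $U$ are available. One point to record at the outset: for $I$ to be normal one must take $U$ to be a \emph{normal} ultrafilter (which exists as $\kappa$ is measurable), since for a non-normal $U$ a non-constant regressive function produces a diagonal union of $I$-small sets that is $U$-large; I assume henceforth that $U$ is normal. It is immediate from the ${<}\kappa$-completeness of $U$ that $I$ is a proper ideal closed under subsets and finite unions, that $\kappa\notin I$ (no $Y\in U$ is empty), and that $I$ extends the bounded ideal (if $\alpha<\kappa$ then $\kappa\setminus\alpha\in U$ since $U$ is nonprincipal and ${<}\kappa$-complete).

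For ${<}\kappa$-completeness I would argue as follows. Fix $\gamma<\kappa$ and a sequence $\langle X_i\mid i<\gamma\rangle\in V[G]$ of sets in $I$, and in $V[G]$ choose $Y_i\in U$ with $Y_i\cap X_i=\emptyset$. The map $i\mapsto Y_i$ takes values in $V$, so by the $\kappa$-c.c.\ there is $F\in V$ with $\dom(F)=\gamma$, $F(i)\subseteq U$, $|F(i)|<\kappa$ and $Y_i\in F(i)$ for all $i$. Back in $V$, put $Y_i^{\ast}:=\bigcap F(i)\in U$ (using ${<}\kappa$-completeness, $|F(i)|<\kappa$); then $Y_i^{\ast}\subseteq Y_i\subseteq\kappa\setminus X_i$ and $\langle Y_i^{\ast}\mid i<\gamma\rangle\in V$, so $\bigcap_{i<\gamma}Y_i^{\ast}\in U$, which witnesses $\bigcup_{i<\gamma}X_i\in I$.

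For normality I would verify closure under diagonal unions, which is equivalent to normality for a ${<}\kappa$-complete ideal containing the bounded sets. Given $\langle X_i\mid i<\kappa\rangle\in V[G]$ with each $X_i\in I$, I would run the previous construction with $\gamma=\kappa$ to obtain $\langle Y_i^{\ast}\mid i<\kappa\rangle\in V$ with $Y_i^{\ast}\in U$ and $Y_i^{\ast}\subseteq\kappa\setminus X_i$, and then use normality of $U$ to conclude that the diagonal intersection $\{\xi<\kappa\mid\forall i<\xi\ \xi\in Y_i^{\ast}\}$ lies in $U$; this set is disjoint from $\nabla_{i<\kappa}X_i$ (if $\xi$ is in it then $\xi\notin X_i$ for all $i<\xi$), so $\nabla_{i<\kappa}X_i\in I$.

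The one genuinely delicate point — and the reason the chain condition is indispensable — is that the sets $Y_i\in U$ are chosen in $V[G]$, so $\langle Y_i\mid i<\gamma\rangle$ need not belong to $V$; once the $\kappa$-c.c.\ has been used to replace it by a $V$-sequence of members of $U$, the ${<}\kappa$-completeness and normality of $U$ finish the argument with no further difficulty.
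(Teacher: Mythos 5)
Your proof is correct and follows essentially the same route as the paper's: use the $\kappa$-c.c.\ of $\Coll(\nu,<\kappa)$ to replace the $V[G]$-sequence of witnessing $U$-sets by a ground-model sequence (your covering function $F$ is just the paper's maximal antichains $A_\alpha$ in disguise), then apply the ${<}\kappa$-completeness, respectively the normality, of $U$ in $V$ via an intersection or diagonal intersection. Your explicit remark that $U$ must be taken normal is a fair catch — the paper's proof also uses this tacitly when it puts the diagonal intersection of the $X_\alpha$ into $U$ and when it asserts that $U$ concentrates on regular cardinals above $\nu$.
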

		
		\begin{proof}
			In $V$, let $\dot{f}$ and $p\in G$ be such that $p$ forces $\dot{f}$ to be a function from some $\check{\gamma}$ into $\dot{I}^+$, where $\gamma<\kappa$. For each $\alpha<\gamma$, let $A_{\alpha}$ be a maximal antichain of conditions $q\leq p$ forcing $\dot{f}(\check{\alpha})\cap\check{X}_q^{\alpha}=\emptyset$ for some $X_q^{\alpha}\in U$. By the $\kappa$-cc of the collapse, $|A_{\alpha}|<\kappa$ and so $X_{\alpha}:=\bigcap_{q\in A_{\alpha}}X_q^{\alpha}\in U$. Moreover, $\bigcap_{\alpha<\gamma}X_{\alpha}\in U$. But $p\Vdash\bigcup_{\alpha<\gamma}\dot{f}(\check{\alpha})\cap\check{X}=\emptyset$, so we are done.
			
			Now let $\dot{f}$ be forced by some $p\in G$ to be a function from $\check{\kappa}$ into $\dot{I}^+$. As before, find $(X_{\alpha})_{\alpha<\kappa}$ in $U$ such that $p\Vdash\dot{f}(\check{\alpha})\cap\check{X}_{\alpha}=\emptyset$. Let $X$ be the diagonal intersection of all $X_{\alpha}$. Then $p$ forces that the diagonal union of $\dot{f}$ is disjoint from $\check{X}$.
		\end{proof}
		
		Now we construct $B$ so that $I$ and $B$ together witness $\LIP(\nu,\nu^+)$. Work in $V$. Let $\dQ:=\Coll(\nu,<\kappa)$ and for $\alpha<\kappa$, let $\dQ_{\alpha}:=\{p\in\dQ\;|\;\dom(p)\subseteq\nu\times\alpha\}$, $\dQ^{\alpha}:=\{p\in\dQ\;|\;\dom(p)\cap\nu\times\alpha=\emptyset\}$. Clearly $\dQ$ is isomorphic to $\dQ_{\alpha}\times\dQ^{\alpha}$.
		
		Let $\tau$ be a $\dQ$-name for an element of $\dot{I}^+$ (without loss of generality assume that this is forced by $\emptyset$). Then there is a dense set $D(\tau)$ of conditions $p$ such that $X_{\tau,p}:=\{\alpha<\kappa\;|\;\exists q\in\dQ^{\alpha}\;p\cup q\Vdash\check{\alpha}\in\tau\}$ (which is in $V$) is in $U$ (see \cite[Lemma 2]{GalvinJechMagidorIdealGame}). In $V[G]$, whenever $p\in D(\tau)\cap H$ and $f\in V$ is a function such that for each $\alpha\in X_{\tau,p}$, $f(\alpha)$ witnesses $\alpha\in X_{\tau,p}$, let
		$$A(\tau,p,f,G):=\{\alpha\in X_{\tau,p}\;|\;f(\alpha)\in G\}$$
		Let $B$ be the collection of all $A(\tau,p,f,G)$. Clearly, for all $\tau$ which are forced to be in $\dot{I}^+$, we can find $p$ and $f$ such that $A(\tau,p,f,G)\subseteq\tau^G$, so $B$ is indeed dense in $I^+$. Moreover, by \cite[Lemma 3]{GalvinJechMagidorIdealGame}, each $A(\tau,p,f,G)$ is in $I^+$. We are done after showing:
		
		\begin{myclaim}\label{Claim2LIP}
			Whenever $(A_{\alpha})_{\alpha<\gamma}\in{}^{<\nu}B$ is descending, $\bigcap_{\alpha<\gamma}A_{\alpha}\in I^+$.
		\end{myclaim}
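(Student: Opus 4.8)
The plan is to argue in $V[G]$ and show directly that $\bigcap_{\alpha<\gamma}A_\alpha$ meets every $Y\in U$, which is exactly membership in $I^+$. Write each $A_\alpha=A(\tau_\alpha,p_\alpha,f_\alpha,G)$ with $X_\alpha:=X_{\tau_\alpha,p_\alpha}\in U$, recalling $A_\alpha=\{\beta\in X_\alpha:f_\alpha(\beta)\in G\}$ where $f_\alpha(\beta)\in\dQ^\beta$ for $\beta\in X_\alpha$. Since $\Coll(\nu,<\kappa)$ is ${<}\,\nu$-closed and all the names, conditions and functions involved are members of $V$, the sequence $(\tau_\alpha,p_\alpha,f_\alpha)_{\alpha<\gamma}$ -- of length $\gamma<\nu$ -- is itself in $V$; in particular $(X_\alpha)_{\alpha<\gamma}$ and $(f_\alpha)_{\alpha<\gamma}$ lie in $V$. (If $\gamma$ is a successor the claim is trivial, since then $\bigcap_{\alpha<\gamma}A_\alpha$ is the $\subseteq$-smallest term, which lies in $B\subseteq I^+$; so the content is the case $\gamma$ limit.) Fix $Y\in U$ and put $Z:=Y\cap\bigcap_{\alpha<\gamma}X_\alpha$, which is in $U$ because $U$ is ${<}\,\kappa$-complete and $\gamma<\nu<\kappa$; in particular $Z$ is unbounded in $\kappa$.

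The heart of the argument is to use the descending hypothesis to force the witnessing conditions to line up. For each pair $\alpha<\alpha'<\gamma$ pick $t_{\alpha,\alpha'}\in G$ forcing ``$\dot A_{\alpha'}\subseteq\dot A_\alpha$'', where $\dot A_\alpha:=\{(\check\beta,f_\alpha(\beta)):\beta\in X_\alpha\}$. Because these are fewer than $\nu$ conditions, all lying in $G$, and $\Coll(\nu,<\kappa)$ is ${<}\,\nu$-closed, a routine density argument produces $t^*\in G$ with $t^*\le t_{\alpha,\alpha'}$ for all such pairs (and, say, $t^*\le p_\alpha$ for all $\alpha$); fix $\mu^*<\kappa$ with $\dom(t^*)\subseteq\nu\times\mu^*$. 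Now for every $\beta\in Z$ with $\beta>\mu^*$ and every $\alpha<\alpha'<\gamma$ one has $f_\alpha(\beta)\subseteq f_{\alpha'}(\beta)$: since $f_{\alpha'}(\beta)\in\dQ^\beta$ has domain disjoint from $\dom(t^*)$, the condition $s:=t^*\cup f_{\alpha'}(\beta)$ is a common extension of $f_{\alpha'}(\beta)$ and $t_{\alpha,\alpha'}$, so $s$ forces $f_{\alpha'}(\beta)\in\dot G$ and hence -- by choice of $t_{\alpha,\alpha'}$ and $\beta\in X_{\alpha'}$ -- also $f_\alpha(\beta)\in\dot G$; by separativity $f_\alpha(\beta)\subseteq s$, and since $\dom(f_\alpha(\beta))$ is likewise disjoint from $\dom(t^*)$ we conclude $f_\alpha(\beta)\subseteq f_{\alpha'}(\beta)$. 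Thus for such $\beta$ the conditions $(f_\alpha(\beta))_{\alpha<\gamma}$ form a $\subseteq$-chain, so (as $\gamma<\cf(\nu)$) their union $g(\beta):=\bigcup_{\alpha<\gamma}f_\alpha(\beta)$ is again a condition, lying in $\dQ^\beta$, with $g(\beta)\le f_\alpha(\beta)$ for every $\alpha<\gamma$.

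To finish, set $W:=\{\beta\in\bigcap_{\alpha<\gamma}X_\alpha:\bigcup_{\alpha<\gamma}f_\alpha(\beta)\text{ is a condition}\}$, a member of $V$. By the previous paragraph $W$ contains $Z\setminus(\mu^*+1)$, which is in $U$, so $W\in U$ and hence $Y\cap W\in U$ is unbounded in $\kappa$. The set $\{g(\beta):\beta\in Y\cap W\}$ lies in $V$ and is predense in $\dQ$: given any $r\in\dQ$, pick $\beta\in Y\cap W$ above a bound for $\dom(r)$; then $\dom(g(\beta))\subseteq\nu\times[\beta,\kappa)$ is disjoint from $\dom(r)$, so $r$ and $g(\beta)$ are compatible. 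As $G$ is $\dQ$-generic over $V$, there is $\beta\in Y\cap W$ with $g(\beta)\in G$; then $f_\alpha(\beta)\in G$ for all $\alpha<\gamma$, and since $\beta\in W\subseteq X_\alpha$ this gives $\beta\in A_\alpha$ for every $\alpha<\gamma$. So $\beta\in Y\cap\bigcap_{\alpha<\gamma}A_\alpha$, and since $Y\in U$ was arbitrary, $\bigcap_{\alpha<\gamma}A_\alpha\in I^+$.

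The step I expect to be the main obstacle is the coherence claim ``$f_\alpha(\beta)\subseteq f_{\alpha'}(\beta)$ for large $\beta\in Z$'', and it is the only place the descending hypothesis is genuinely used: without it, for a fixed $\beta$ the conditions $f_\alpha(\beta)$ need not be pairwise compatible, so no single $\beta$ could belong to all the $A_\alpha$, and the best one could hope for is that each $A_\alpha$ separately is positive (the case $\gamma=1$, i.e.\ the fact that $B\subseteq I^+$). Translating $A_{\alpha'}\subseteq A_\alpha$ through membership in the generic filter, and then absorbing the ``error term'' $t^*$ using the closure of the collapse, is what forces these conditions into an increasing chain above a co-bounded piece of $Z$; the remaining ingredients -- the completeness of $U$ to keep $Z$ in $I^+$, and the predensity/genericity step to land inside $G$ -- parallel the proof that each $A(\tau,p,f,G)$ is positive.
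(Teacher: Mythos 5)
Your proof is correct and follows essentially the same route as the paper's: both pull the sequence of triples $(\tau_\alpha,p_\alpha,f_\alpha)$ back into $V$ by distributivity, use a condition in $G$ forcing the descending relation together with the decomposition $\dQ\cong\dQ_{\alpha}\times\dQ^{\alpha}$ to show the witnessing conditions $f_\alpha(\beta)$ cohere for all sufficiently large $\beta$ in $\bigcap_\alpha X_{\tau_\alpha,p_\alpha}$, and then amalgamate them into a single condition $g(\beta)$ that can be placed in $G$ above any prescribed $Y\in U$. The only differences are cosmetic: you derive actual containment $f_\alpha(\beta)\subseteq f_{\alpha'}(\beta)$ via separativity where the paper settles for pairwise compatibility, and you finish with a direct predensity argument where the paper argues by contradiction against a condition forcing $\dot{A}\cap\check{C}=\emptyset$.
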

		\begin{proof}
			In $V[G]$, let $(\tau_i)_{i<\gamma}$, $(p_i)_{i<\gamma}$ and $(f_i)_{i<\gamma}$ be sequences such that the sequence $(A(\tau_i,p_i,f_i,G))_{i<\gamma}$ is descending, where $\gamma<\nu$. We can assume that all these sequences are in $V$ by the ${<}\,\nu$-distributivity of $\dQ$. Let $r\in G$ force that the sequence is descending.
			
			\begin{mysclaim}
				For all $i<j<\gamma$ there is $\alpha_{i,j}$ such that $f_i(\alpha)$ and $f_j(\alpha)$ are compatible for all $\alpha\in X_{\tau_i,p_i}\cap X_{\tau_j,p_j}\smallsetminus\alpha_{i,j}$.
			\end{mysclaim}
			\begin{proof}
				We know that $r\Vdash A(\check{\tau}_i,\check{p}_i,\check{f}_i,\Gamma)\supseteq A(\check{\tau}_j,\check{p}_j,\check{f}_j,\Gamma)$. Let $\alpha_{i,j}$ be such that $r':=r\cup p_i\cup p_j\in\dQ_{\alpha_{i,j}}$. Then whenever $\alpha\in (X_{\tau_i,p_i}\cap X_{\tau_j,p_j})\smallsetminus\alpha_{i,j}$, we know that $r'\cup f_j(\alpha)\Vdash\check{\alpha}\in A(\tau_i,\check{p}_i,\check{f}_i,\Gamma)$, so $r'\cup f_j(\alpha)\Vdash\check{f}_i(\check{\alpha})\in\Gamma$ which implies that $f_i(\alpha)$ and $f_j(\alpha)$ are compatible.
			\end{proof}
			
			Let $X:=\bigcap_{i<\gamma}X_{\tau_i,p_i}$ (which is in $U$). In $V$, let $p:=\bigcup_{i<\gamma}p_i$ and let $f(\alpha):=\bigcup_{i<\gamma}f_i(\alpha)$ for $\alpha\in X\smallsetminus\sup_{i,j}\alpha_{i,j}$, empty otherwise. This is possible as $\{p_i\;|\;i<\gamma\}$ as well as $\{f_i(\alpha)\;|\;i<\gamma\}$ for every $\alpha\in X\smallsetminus\sup_{i,j}\alpha_{i,j}$ is a directed subset of $\dQ$ and $\dQ$ is ${<}\,\nu$-directed closed.
			
			\begin{myclaim}
				In $V[G]$, $A:=\bigcap_{i<\gamma}A(\tau_i,p_i,f_i,G)\in I^+$.
			\end{myclaim}
			
			\begin{proof}
				Assume toward a contradiction that there is $r\in G$ and $C\in U$ such that $r\Vdash\dot{A}\cap\check{C}=\emptyset$. Assume $r\leq p$ and let $\alpha$ be such that $r\in\dQ_{\alpha}$. Let $\beta\in X\cap C$ with $\beta>\alpha,\sup_{i,j}\alpha_{i,j}$. Then clearly $r\cup f(\beta)$ is a condition. However, for any $i<\gamma$, $r\cup f(\beta)\leq p_i\cup f_i(\beta)$, so for any $i<\gamma$, $r\cup f(\beta)\Vdash \check{p}_i\cup \check{f}_i(\check{\beta})\in\Gamma$, which implies $r\cup f(\beta)\Vdash\check{\beta}\in A(\check{\tau}_i,\check{p}_i,\check{f}_i,\Gamma)$. In summary, $r\cup f(\beta)\Vdash\check{\beta}\in \dot{A}\cap \check{C}$, a contradiction.
			\end{proof}
			
			This ends the proof of Claim \ref{Claim2LIP}.
		\end{proof}
		So now we can simply take another element of $B$ which is below $\bigcap_{\alpha<\gamma}A_{\alpha}$ and obtain that $B$ is ${<}\,\nu$-closed.
	\end{proof}

	The statement we are really after is the following:
	
	\begin{mylem}\label{OmegaLIPLemma}
		Let $(\kappa_n)_{n\in\omega}$ be an increasing sequence of supercompact cardinals. Let $\mu<\kappa_0$ be regular. There is a ${<}\,\mu$-directed closed forcing extension in which for every $n\in\omega$, $\kappa_n=\mu^{+n+1}$ and $\LIP(\mu,\kappa_n)$ holds.
	\end{mylem}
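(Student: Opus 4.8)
\emph{Overview.} I would force with a two‑step iteration $\mathbb{L}*\mathbb{C}$. The preliminary step $\mathbb{L}$ makes each $\kappa_n$ Laver‑indestructibly supercompact, and $\mathbb{C}$ is a product of L\'evy collapses arranged so that $\kappa_n$ becomes $\mu^{+n+1}$ while staying measurable after a suitable ``tail'' of $\mathbb{C}$; then $\LIP(\mu,\kappa_n)$ follows from Lemma~\ref{LIPLemma}. Throughout set $\kappa_{-1}:=\mu$, and assume $\mu\geq\omega_1$ (for $\mu=\omega$ the statement $\LIP(\omega,\kappa_n)$ is trivial in $\mathsf{ZFC}$, so only $\kappa_n=\aleph_{n+1}$ needs to be arranged).

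\emph{The forcing.} Let $\mathbb{L}=\mathbb{L}_0*\mathbb{L}_1*\cdots$ be a countable‑support iteration in which $\mathbb{L}_n$ is the Laver preparation of $\kappa_n$, performed in the model reached so far (where $\kappa_n$ is still supercompact, the previous stages having size $<\kappa_n$), with the Laver function chosen trivial on all coordinates $\leq\kappa_{n-1}$. Then in $V^{\mathbb{L}}$ every $\kappa_n$ is supercompact and indestructible under ${<}\kappa_n$‑directed closed forcing, all cardinals and the regularity of $\mu$ are preserved, and $\mathbb{L}$ is ${<}\mu$‑directed closed (this is the delicate part, discussed below). Over $V^{\mathbb{L}}$ let $\mathbb{C}$ be the full‑support product of the L\'evy collapses $\mathbb{C}_n:=\Coll(\kappa_{n-1},{<}\kappa_n)$, $n\in\omega$. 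Each $\mathbb{C}_n$ is ${<}\kappa_{n-1}$‑directed closed, hence ${<}\mu$‑directed closed, and since $\mu$ is regular a countable‑support product of ${<}\mu$‑directed closed posets is again ${<}\mu$‑directed closed; thus $\mathbb{C}$, and therefore $\mathbb{L}*\mathbb{C}$, is ${<}\mu$‑directed closed.

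\emph{Cardinals and $\LIP$.} Fix $n$ and factor $\mathbb{C}=\mathbb{C}_{\leq n-1}\times\mathbb{C}_n\times\mathbb{C}_{>n}$, where $\mathbb{C}_{\leq n-1}=\prod_{m\leq n-1}\mathbb{C}_m$ is a finite product of size $\kappa_{n-1}<\kappa_n$, and $\mathbb{C}_{>n}=\prod_{m>n}\mathbb{C}_m$ is ${<}\kappa_n$‑directed closed (each factor being ${<}\kappa_{m-1}$‑directed closed with $m-1\geq n$). Since $\mathbb{C}_{>n}$ preserves all cardinals $\leq\kappa_n$ while $\mathbb{C}_{\leq n}=\Coll(\mu,{<}\kappa_0)\times\cdots\times\Coll(\kappa_{n-1},{<}\kappa_n)$ manifestly forces $\kappa_0=\mu^+,\dots,\kappa_n=\mu^{+n+1}$, we get $V^{\mathbb{L}*\mathbb{C}}\models\kappa_n=\mu^{+n+1}$ for every $n$. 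Now force in the order $\mathbb{C}_{>n}$, then $\mathbb{C}_{\leq n-1}$, then $\mathbb{C}_n$. In $V^{\mathbb{L}}[\mathbb{C}_{>n}]$ the cardinal $\kappa_n$ is still supercompact by indestructibility; as $\mathbb{C}_{\leq n-1}$ then has size $<\kappa_n$, the L\'evy--Solovay argument keeps $\kappa_n$ supercompact, in particular measurable, in the resulting model $W$, in which moreover $\kappa_{n-1}$ has been collapsed to $\mu^{+n}$ (and $\mathbb{C}_{>n}$, being ${<}\kappa_n$‑directed closed, left $\kappa_{n-1}$ and $\mu$ untouched). Applying Lemma~\ref{LIPLemma} in $W$ with $\nu:=\kappa_{n-1}$ (a successor cardinal, hence regular) and the measurable $\kappa:=\kappa_n$, the final factor $\mathbb{C}_n$, which over $W$ is forcing equivalent to $\Coll(\kappa_{n-1},{<}\kappa_n)$, forces $\LIP(\kappa_{n-1},\kappa_n)$. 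Hence $V^{\mathbb{L}*\mathbb{C}}=W[\mathbb{C}_n]\models\LIP(\kappa_{n-1},\kappa_n)$, and since $\mu\leq\kappa_{n-1}$ every ${<}\kappa_{n-1}$‑closed witness is ${<}\mu$‑closed, so $\LIP(\mu,\kappa_n)$ holds. (For $n=0$ the middle factor is trivial and one applies Lemma~\ref{LIPLemma} directly with $\nu=\mu$.)

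\emph{Main obstacle.} The hard part is the preliminary forcing $\mathbb{L}$: it must be ${<}\mu$‑directed closed while still making every $\kappa_n$ indestructibly supercompact, and it must not disturb the cardinals below. The key is that a Laver function for $\kappa_n$ may be required to vanish on any prescribed bounded subset of $\kappa_n$ and still guess every ${<}\kappa_n$‑directed closed poset cofinally often below $\kappa_n$; choosing it trivial on coordinates $\leq\kappa_{n-1}$ makes $\mathbb{L}_0$ ${<}\mu$‑directed closed and each $\mathbb{L}_n$ $(n\geq1)$ ${<}\kappa_{n-1}$‑directed closed, so that $\mathbb{L}_n$ neither collapses cardinals $\leq\kappa_{n-1}$ nor breaks the indestructibility already secured for $\kappa_0,\dots,\kappa_{n-1}$ (each of which is indestructible under ${<}\kappa_m$‑directed closed forcing, and $\mathbb{L}_n$ is such). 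One then checks in the usual way that a reverse‑Easton iteration whose iterands above stage $\kappa_{n-1}$ are ${<}\alpha$‑directed closed is ${<}\kappa_{n-1}$‑directed closed and cardinal preserving, and that the standard indestructibility argument survives the truncation of the Laver functions. I expect this bookkeeping to be the main technical content of the proof; everything else reduces to Lemma~\ref{LIPLemma} together with routine product‑forcing and L\'evy--Solovay considerations.
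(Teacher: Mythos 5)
Your proof is correct and follows essentially the same route as the paper: Laver-indestructible supercompacts, the full-support product $\prod_{n}\Coll(\kappa_{n-1},<\kappa_n)$ (with $\kappa_{-1}=\mu$), and an application of Lemma~\ref{LIPLemma} after factoring the product around the $n$-th collapse. The only divergence is in handling the factor $\prod_{m<n}\Coll(\kappa_{m-1},<\kappa_m)$: you perform it before the $n$-th collapse and invoke L\'evy--Solovay to keep $\kappa_n$ measurable, whereas the paper performs it last and instead proves a small claim that $\LIP(\mu,\kappa_n)$ is preserved by ${<}\,\mu$-closed forcing of size $<\kappa_n$; both are fine.
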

	
	\begin{proof}
		By a technique of Laver (see \cite{LaverIndestruct}) we can assume that the supercompactness of every $\kappa_n$ is indestructible under ${<}\,\kappa_n$-directed closed forcing (in particular, this preparation is possible with a ${<}\,\mu$-directed closed forcing). Let $\dP:=\prod_{n\in\omega}\Coll(\kappa_{n-1},<\!\kappa_n)$ (with $\kappa_{-1}:=\mu$). Clearly, for every $n\in\omega$, $\dP$ forces $\check{\kappa}_n=\check{\mu}^{+n+1}$.
		
		Let $n\in\omega$ be arbitrary. Then we can write
		$$\dP=\prod_{k<n}\Coll(\kappa_{k-1},<\!\kappa_k)\times\Coll(\kappa_{n-1},<\!\kappa_n)\times\prod_{k>n}\Coll(\kappa_{k-1},<\!\kappa_k)$$
		After forcing with $\prod_{k>n}\Coll(\kappa_{k-1},<\!\kappa_k)$, $\kappa_n$ is still supercompact and in particular measurable. Ergo $\Coll(\kappa_{n-1},<\!\kappa_n)\times\prod_{k>n}\Coll(\kappa_{k-1},<\!\kappa_k)$ forces $\LIP(\check{\kappa}_{n-1},\check{\kappa}_n)$. We are done after showing the following:
		
		\begin{myclaim}
			Let $\dP$ be of size ${<}\,\kappa$ and ${<}\,\delta$-closed. If $\LIP(\delta,\kappa)$ holds, it is preserved by $\dP$.
		\end{myclaim}
		
		\begin{proof}
			Let $I$ and $B$ witness $\LIP(\delta,\kappa)$ and let $G$ be $\dP$-generic. In $V[G]$, let $J$ be generated by $I$, i.e. $X\in J$ if and only if $X\subseteq Y$ for some $Y\in I$. We will verify that $J$ and $B$ witness $\LIP(\delta,\kappa)$. Clearly, $B$ is still ${<}\,\delta$-closed by the closure of $\dP$ and $J$ is normal and ${<}\,\kappa$-complete by the $\kappa$-cc of $\dP$.
			
			Let $\tau$ be a $\dP$-name and $p\in\dP$ such that $p\Vdash\tau\in\dot{J}^+$. Then $p$ forces that $\tau$ is contained in $\bigcup_{q\leq p}\{\alpha\in\kappa\;|\;q\Vdash\check{\alpha}\in\tau\}$. As $I$ is ${<}\,\kappa$-complete and $|\dP|<\kappa$, there is $q\leq p$ such that $\{\alpha\in\kappa\;|\;q\Vdash\check{\alpha}\in\tau\}\in I^+$. Ergo there is $X\in B$ with $X\subseteq\{\alpha\in\kappa\;|\;q\Vdash\check{\alpha}\in\tau\}$. It follows that $q\Vdash\check{X}\subseteq\tau$.
		\end{proof}
		
		Since $\prod_{k<n}\Coll(\kappa_{k-1},<\!\kappa_k)$ is ${<}\,\mu$-closed and of size $\kappa_{n-1}<\kappa_n$, we are done.
	\end{proof}
	
	\section{The Namba Forcing}
	
	Namba forcing is intimately connected to the study of good points. This is mostly due to the fact that (at least in the Laver-style variant with splitting into normal ideals) it adds exact upper bounds to scales and thus, by choosing the splitting sets correctly, we can decide whether we want points to be good or bad. However, in this work we will use Namba forcing for a different reason: By techniques of Shelah (see \cite[Chapter XI]{ShelahProperImproper}), assuming $\GCH$, Namba forcing does not add any new functions from $\omega$ into any cardinal smaller than the limit superior of the completeness of the used ideals. Despite this, Namba forcing and any iteration of Namba forcing together with a sufficiently closed forcing will have a weak variant of the approximation property (see \cite[Theorem 2.1]{LevineMildenbergerGoodScaleNonCompactSquares}). The combination of these two properties is what allows us to show that any such iteration will not make $\aleph_{\omega+1}$ $d$-approachable with respect to any normal subadditive coloring $d$ on $\aleph_{\omega+1}$.
	
	The basic poset could just as well be defined with ``normal'' Namba forcing (just splitting into the nonstationary ideal) on $\prod_n\aleph_n$ and still have the property that it does not make $\aleph_{\omega+1}$ $d$-approachable for any normal subadditive coloring. However, we will use a variant of Namba forcing which splits into $\LIP$-ideals for the following two reasons: For one, even in the case where we collapse $\aleph_{\omega+1}$ to $\aleph_1$ it seems necessary to obtain the desired regularity properties for the tail forcing and secondly, in the case where we collapse $\aleph_{\omega+1}$ to larger $\aleph_n$ we need it to be able to show that all cardinals below and including $\aleph_n$ are preserved.
	
	\begin{mydef}
		Let $(\kappa_n)_{n\in\omega}$ be an increasing sequence of regular cardinals. For each $n\in\omega$, let $I_n$ be a ${<}\,\kappa_n$-complete ideal over $\kappa_n$. Then $\dL((\kappa_n,I_n)_{n\in\omega})$ consists of all $p\subseteq\bigcup_{k\in\omega}\prod_{n\leq k}\kappa_n$ such that
		\begin{enumerate}
			\item $p$ is closed under restriction.
			\item there is some $\stem(p)\in p$ such that for all $s\in p$, $s\subseteq\stem(p)$ or $\stem(p)\subseteq s$ and whenever $\stem(p)\subseteq s$,
			$$\osucc_p(s):=\{\alpha\in\kappa_{|s|}\;|\;s^{\frown}\alpha\in p\}\in I_{|s|}^+$$
		\end{enumerate}
		We let $q\leq p$ if and only if $q\subseteq p$. We let $q\leq_0p$ if and only if $q\leq p$ and $\stem(q)=\stem(p)$.
	\end{mydef}

	This forcing was used by Cox and Krueger (see \cite{CoxKruegerNamba}) to obtain non-internally unbounded weakly guessing models. In that same work, they showed that if the $I_n$'s are $\LIP$-ideals, the associated Namba forcing has a weak form of the approximation property. However, it is not known to us whether their proof can also work for iterations of Namba forcing and closed forcing. This is due to the fact that they seem to require a stronger closure than we have: In order to obtain the weak ${<}\,\mu$-approximation property they require a ${<}\,\mu^+$-closed direct extension ordering while our direct extension ordering is merely ${<}\,\mu$-closed.
	
	For the rest of this section, we fix the following:
	
	\begin{myass}
		 $(\kappa_n)_{n\in\omega}$ is an increasing sequence of regular cardinals and for each $n\in\omega$, $I_n$ is a ${<}\,\kappa_n$-complete ideal over $\kappa_n$. Define $\kappa^*:=(\sup_n\kappa_n)^+$. $\mu<\kappa_0$ is a regular cardinal. For each $n\in\omega$ there is a set $B_n\subseteq I_n^+$ which is dense in $I_n^+$ and ${<}\,\mu$-closed.
	\end{myass}
	
	In other words, we are assuming that each $I_n$ witnesses $\LIP(\mu,\kappa_n)$.
	
	The following is \cite[Lemma 6.5]{CoxKruegerNamba}:
	
	\begin{myfact}\label{PrikryProperty}
		Whenever $p$ is a condition in $\dL((\kappa_n,I_n)_{n\in\omega})$ and $\tau$ is an $\dL((\kappa_n,I_n)_{n\in\omega})$-name for an ordinal such that $p\Vdash\tau<\check{\lambda}$ for some ordinal $\lambda<\kappa_{|\stem(p)|}$, there is $q\leq_0p$ which forces $\tau=\check{\delta}$ for some $\delta<\lambda$.
	\end{myfact}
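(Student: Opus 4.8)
The plan is to run the standard fusion-free proof of the Prikry property for tree-like forcings, where the role of a fusion sequence is played entirely by the $<\kappa_n$-completeness of the ideals $I_n$ (the dense sets $B_n$ are not needed here, only completeness). For a node $s\in p$ with $\stem(p)\subseteq s$, write $p_s:=\{t\in p\;|\;t\subseteq s\text{ or }s\subseteq t\}$, which is a condition with $\stem(p_s)=s$, satisfies $p_s\leq p$, and equals $p$ exactly when $s=\stem(p)$. Call a node $s\supseteq\stem(p)$ \emph{decisive} if there are $q\leq_0 p_s$ and $\delta<\lambda$ with $q\Vdash\tau=\check\delta$. Since $p_{\stem(p)}=p$, the conclusion of the Fact is precisely the assertion that $\stem(p)$ is decisive.

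First I would prove an amalgamation lemma: if $s\supseteq\stem(p)$ and $D(s):=\{\alpha\in\osucc_p(s)\;|\;s^\frown\alpha\text{ is decisive}\}\in I_{|s|}^+$, then $s$ is decisive. For each $\alpha\in D(s)$ fix a witness $q_\alpha\leq_0 p_{s^\frown\alpha}$ and value $\delta_\alpha<\lambda$; since $|\lambda|<\kappa_{|\stem(p)|}\leq\kappa_{|s|}$ and $I_{|s|}$ is $<\kappa_{|s|}$-complete, there are a single $\delta<\lambda$ and an $I_{|s|}^+$-positive $A\subseteq D(s)$ with $\delta_\alpha=\delta$ for all $\alpha\in A$. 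Put $q:=\{t\in p\;|\;t\subseteq s\}\cup\bigcup_{\alpha\in A}q_\alpha$. Then $q$ is a condition with $\stem(q)=s$ and $\osucc_q(s)=A$, one has $q\leq_0 p_s$, and $q\Vdash\tau=\check\delta$: for a generic $G\ni q$ the generic branch extends $s$ by some $\alpha\in\osucc_q(s)=A$, and any $r\leq q$ in $G$ with $\stem(r)\supseteq s^\frown\alpha$ refines $q_\alpha$, hence forces $\tau=\check\delta$. So $s$ is decisive.

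Next, suppose toward a contradiction that $\stem(p)$ is not decisive. By the amalgamation lemma together with the fact that $I_{|s|}$ is an ideal, whenever $s\supseteq\stem(p)$ is not decisive the set $B(s):=\{\alpha\in\osucc_p(s)\;|\;s^\frown\alpha\text{ is not decisive}\}$ must lie in $I_{|s|}^+$ (otherwise $D(s)=\osucc_p(s)\setminus B(s)\in I_{|s|}^+$ and the lemma would make $s$ decisive). Hence one can recursively build $q\leq_0 p$ with $\stem(q)=\stem(p)$ and $\osucc_q(s)=B(s)$ at every node $s$ of $q$ above the stem, so that every such node is non-decisive. Since $p\Vdash\tau<\check\lambda$, the conditions deciding the value of $\tau$ are predense below $q$, so there is $q'\leq q$ with $q'\Vdash\tau=\check\delta$ for a specific $\delta<\lambda$; then $s:=\stem(q')$ is a node of $q$ above $\stem(p)$, and $q'\subseteq q\subseteq p$ with $\stem(q')=s$ gives $q'\leq_0 p_s$, so $s$ is decisive, contradicting its construction. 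Therefore $\stem(p)$ is decisive, as required.

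The proof is routine and I do not foresee a genuine obstacle: notably, no rank function and no closure of $\leq_0$ are needed, since the contradiction comes directly from the non-decisive subtree $q$ and the fact that every condition has a finite stem. The only points that require honest verification are bookkeeping: that the amalgamated $q$ is closed under restriction with all $\osucc_q$-sets positive (immediate, since below each $s^\frown\alpha$ it coincides with $q_\alpha$), and that $q$ forces $\tau=\check\delta$, which is the branch-through-the-tree observation made above.
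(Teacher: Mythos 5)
Your argument is correct: the amalgamation lemma (stabilizing the decided value over an $I_{|s|}^+$-positive set using $\lambda<\kappa_{|\stem(p)|}\leq\kappa_{|s|}$ and the $<\kappa_{|s|}$-completeness of $I_{|s|}$) plus the non-decisive subtree contradiction is exactly the standard pure-decision proof for these ideal-based Namba trees. The paper does not prove this Fact itself but cites it as \cite[Lemma 6.5]{CoxKruegerNamba}, and the argument there is of essentially the same shape as yours, so there is nothing to add.
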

	
	Since we can code statements in the forcing language as ordinals in $\{0,1\}$, it follows that $\dL((\kappa_n,I_n)_{n\in\omega})$ has the Prikry property and is thus a Prikry-type forcing.
	
	For names of ordinals in $\kappa^*$, we of course cannot hope for decisions using direct extensions. However, we do have the following:
	
	The following first appeared in work of the second author and Mildenberger (see \cite{LevineMildenbergerGoodScaleNonCompactSquares}). Since our forcing notion is slightly different and we are slightly generalizing the lemma (in order to later be able to collapse $\aleph_{\omega+1}$ to any $\aleph_n$ in a non-fresh way), we are giving the proof here, although the argument is quite similar to the one in that article.
	
	\begin{mysen}\label{ApproxProp}
		Let $\dot{\dQ}$ be an $\dL((\kappa_n,I_n)_{n\in\omega})$-name for a ${<}\,\mu$-closed forcing. Assume $\sup_n\kappa_n$ is a strong limit and $2^{\sup_n\kappa_n}=(\sup_n\kappa_n)^+$.
		
		Then whenever $\dot{F}$ is an $\dL((\kappa_n,I_n)_{n\in\omega})*\dot{\dQ}$-name for a cofinal function from $\mu$ into $\kappa^*$, it is forced that $\dot{F}\uhr \check{i}\notin V$ for some $i\in\mu$.
	\end{mysen}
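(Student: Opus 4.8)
The plan is to argue by contradiction. Suppose some $(p,\dot{q})\in\dL((\kappa_n,I_n)_{n\in\omega})*\dot{\dQ}$ forces $\dot{F}\colon\mu\to\kappa^*$ to be cofinal while forcing $\dot{F}\uhr\check{i}\in V$ for every $i<\mu$. Write $\lambda:=\sup_n\kappa_n$, so that $\kappa^*=\lambda^+$ is regular in $V$ and $\mu<\kappa_0<\kappa^*$. First I would record the structural tools. Let $D$ be the collection of conditions $(r,\dot{s})$ whose Namba part $r$ has $\osucc_r(t)\in B_{|t|}$ for every $t\in r$ extending $\stem(r)$; by the density of the $B_n$'s in the $I_n^+$'s, $D$ is $\leq_0$-dense, and by the ${<}\,\mu$-closure of the $B_n$'s together with the ${<}\,\mu$-closure of $\dot{\dQ}$, the direct extension order is ${<}\,\mu$-closed on $D$. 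Also $\dL((\kappa_n,I_n)_{n\in\omega})*\dot{\dQ}$ has the Prikry property: this is Lemma~\ref{SuccStep} applied to Fact~\ref{PrikryProperty} (coding statements of the forcing language as ordinals below $2\leq\kappa_{|\stem(r)|}$, with $\dot{\dQ}$'s direct extension order taken to be $\dot{\leq}$). Consequently, by Lemma~\ref{PrikryPumpUp}(2) applied to $(D,\leq_0)$, the forcing adds no new function $\gamma\to\gamma'$ with $\gamma,\gamma'<\mu$.

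The heart of the argument is the claim that there is a condition $(p_1,\dot{q}_1)\leq(p,\dot{q})$ with $(p_1,\dot{q}_1)\in D$ such that for every $i<\mu$ the set
\[
\mathcal G_i:=\{\,g\in V\ :\ \exists(r,\dot{s})\leq(p_1,\dot{q}_1)\ \text{with}\ (r,\dot{s})\Vdash\dot{F}\uhr\check{i}=\check{g}\,\}
\]
has size ${<}\,\kappa^*$. I would produce $(p_1,\dot{q}_1)$ by a fusion of length $\omega$ along the levels of the Namba tree, starting from a $D$-refinement of $(p,\dot{q})$: at level $n$ one uses the Prikry property together with the hypothesis that each $\dot{F}\uhr\check{i}$ is forced into $V$ to thin, below each Namba node of length $n$ and inside the ${<}\,\mu$-closed $\dot{\dQ}$-part, so that every name $\dot{F}\uhr\check{i}$ becomes decided modulo bounded generic data. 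Once the fusion is complete, the possible values of $\dot{F}\uhr\check{i}$ below $(p_1,\dot{q}_1)$ are read off from the (at most $\lambda$ many) nodes of the tree $p_1$, and the number of possibilities attached to a fixed node of length $n$ is at most $2^{\kappa_n}<\lambda$ since $\lambda$ is a strong limit; a counting argument of this kind, using $2^{\lambda}=\lambda^+=\kappa^*$ to keep the bookkeeping of the fusion under control, then yields $|\mathcal G_i|\leq\lambda<\kappa^*$. Carrying out one fusion that works simultaneously for all $i<\mu$, and ensuring that the ${<}\,\mu$-closed tail $\dot{\dQ}$ does not reintroduce too many possibilities, is the step I expect to be the main obstacle; it is also precisely where the hypotheses ``$\sup_n\kappa_n$ is a strong limit'' and ``$2^{\sup_n\kappa_n}=(\sup_n\kappa_n)^+$'' enter.

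Granting the claim, the conclusion is quick. For each $i<\mu$ set $\delta_i:=\sup\{\,\sup(\mathrm{ran}(g))\ :\ g\in\mathcal G_{i+1}\,\}$; since $|\mathcal G_{i+1}|<\kappa^*$ and $\kappa^*$ is regular in $V$, we get $\delta_i<\kappa^*$. Because every initial segment of $\dot{F}$ is forced into $V$, a genericity argument shows $(p_1,\dot{q}_1)\Vdash\dot{F}\uhr\check{(i+1)}\in\check{\mathcal G}_{i+1}$ for each $i<\mu$, and hence $(p_1,\dot{q}_1)\Vdash\dot{F}(\check{i})\leq\check{\delta}_i$. Let $\delta:=\sup_{i<\mu}\delta_i$; since $\mu<\kappa^*=\cf(\kappa^*)$ (computed in $V$), $\delta<\kappa^*$, and therefore $(p_1,\dot{q}_1)\Vdash\mathrm{ran}(\dot{F})\subseteq\check{\delta}+1$, contradicting that $(p_1,\dot{q}_1)$ forces $\dot{F}$ to be cofinal in $\kappa^*$. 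This completes the proof modulo the claim; the reduction from functions to cofinal sets (replacing $\dot F$ by $i\mapsto\sup\mathrm{ran}(\dot F\uhr i)$, which is again ``fresh'') can also be used to streamline the combinatorics, but the substantive difficulty remains the size bound on $\mathcal G_i$.
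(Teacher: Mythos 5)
There is a genuine gap, and it sits exactly where you flag "the main obstacle": your central Claim cannot be established with the closure that is actually available. To get a \emph{single} condition $(p_1,\dot q_1)$ below which, for \emph{every} $i<\mu$, the name $\dot F\uhr\check i$ is decided by the nodes of some fixed level of the tree (so that $|\mathcal G_i|\leq\kappa_{k_i}<\kappa^*$), you must run the fusion/decision argument of Proposition~\ref{EventualDecision} once per $i$ and then pass to a lower bound of the resulting $\leq_0$-decreasing sequence of length $\mu$. But both the direct extension order on $\dL'((\kappa_n,I_n)_{n\in\omega})$ and $\dot\dQ$ are only ${<}\,\mu$-closed, which handles limit stages \emph{below} $\mu$ but gives no lower bound for the full $\mu$-sequence; in the intended application $\dot\dQ=\dot\Coll(\check\mu,\check\kappa^*)$, a $\mu$-length decreasing sequence of collapse conditions genuinely has no lower bound. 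Nor can the size bound on $\mathcal G_i$ be obtained by counting conditions: without pinning the decision to a level of the tree one only gets $|\mathcal G_i|\leq|\dL*\dot\dQ|$, which is at least $\kappa^*$. Since your final contradiction (boundedness of $\dot F$, hence failure of cofinality) rests entirely on having one condition that controls all $i<\mu$ at once, the argument does not go through.

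The paper's proof is structured precisely to avoid this $\mu$-length construction, and it derives a different contradiction. It only ever controls \emph{countably many} indices $i_0<i_1<\dots$ below a single well-chosen $k<\mu$ of countable cofinality; the existence of such a $k$ is extracted from a winning strategy in an open game (Gale--Stewart plus an elementary submodel of size ${<}\,\mu$), and the construction is then an ordinary $\omega$-length fusion. Crucially, the sets of possible values of $\dot F\uhr\check{i_n}$ attached to the distinct immediate successors of each node are arranged to be pairwise \emph{disjoint} (this is where cofinality of $\dot F$ is used, via Lemma~\ref{KappaStarNotToMu}: each $\dot F\uhr\check i$ is forced bounded, so one can always escape the ${<}\,\kappa^*$-sized union of the previously used value sets by moving to a larger $i$). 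Disjointness means that any condition deciding $\dot F\uhr\check k$ determines, level by level, which successor the Namba generic passes through --- i.e.\ it decides the generic branch, which is not in $V$. That is the contradiction with ``$\dot F\uhr\check k\in V$''; no boundedness of $\dot F$ is ever claimed. If you want to salvage your approach you would need either genuinely more closure (a ${<}\,\mu^+$-closed direct extension order, which the forcing does not have) or to replace the global control over all $i<\mu$ by this kind of local, branch-reading argument.
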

	
	Let us note that whenever a poset $\dQ$ does not have an explicitely defined direct extension ordering, we simply let $\leq_0=\leq$ (it follows easily that $(\dQ,\leq,\leq_0)$ then has the Prikry property). Before we can prove Theorem \ref{ApproxProp} we will need some preliminary results.
	
	\begin{mypro}\label{EventualDecision}
		Let $\dot{\dQ}$ be an $\dL((\kappa_n,I_n)_{n\in\omega})$-name for a forcing poset. Suppose $\tau$ is an $\dL((\kappa_n,I_n)_{n\in\omega})*\dot{\dQ}$-name and $(p,\dot{c})\in\dL((\kappa_n,I_n)_{n\in\omega})*\dot{\dQ}$ forces $\tau\in V$. Then there is $(q,\dot{d})\leq_0(p,\dot{c})$ and some $k$ such that whenever $t\in q$, $|t|=k$, $(q\uhr t,\dot{d})\Vdash\tau=\check{x}$ for some $x$.
	\end{mypro}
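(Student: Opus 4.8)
The plan is to run a fusion argument on the tree $\dL((\kappa_n,I_n)_{n\in\omega})$, keeping track of the level at which $\tau$ becomes decided and carrying the $\dot\dQ$-coordinate along by amalgamating names. Fix $(p,\dot c)$ as in the statement. For a node $t\in p$ with $\stem(p)\subseteq t$ and $n\in\omega$, I would say that $t$ is \emph{$n$-decisive} if there is $(r,\dot d)\leq_0(p\uhr t,\dot c)$ (so $\stem(r)=t$ and $r\Vdash\dot d\leq\dot c$) such that for every $s\in r$ with $|s|=|t|+n$ there is some $x$ with $(r\uhr s,\dot d)\Vdash\tau=\check x$; and that $t$ is \emph{decisive} if it is $n$-decisive for some $n$. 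Note that $0$-decisiveness of $t$ just means that $(p\uhr t,\dot c)$ has a direct extension deciding $\tau$. The proposition asks exactly that $\stem(p)$ be decisive: given a witness $(r,\dot d)$ and the corresponding $n$, one sets $q:=r$ and $k:=|\stem(p)|+n$.

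The heart of the argument is an amalgamation claim: \emph{if $\{\alpha\in\osucc_p(t)\mid t^\frown\alpha\text{ is decisive}\}\in I_{|t|}^+$, then $t$ is decisive.} Using the $<\kappa_{|t|}$-completeness of $I_{|t|}$ (and $\kappa_{|t|}>\aleph_0$) one thins this set to an $I_{|t|}$-positive $A$ on which $t^\frown\alpha$ is $n$-decisive for a single fixed $n$, and then shrinks $A$ into $B_{|t|}$. Choosing a witness $(r_\alpha,\dot d_\alpha)$ for each $\alpha\in A$, one glues the trees $r_\alpha$ below $t$ into one tree $r$ with $\stem(r)=t$ and $\osucc_r(t)=A$, and amalgamates the $\dot d_\alpha$ into a single name $\dot d$ by mixing along the predense set $\{r\uhr(t^\frown\alpha)\mid\alpha\in A\}$ below $r$, so that $r\uhr(t^\frown\alpha)\Vdash\dot d=\dot d_\alpha$. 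Then $(r,\dot d)\leq_0(p\uhr t,\dot c)$, and since for $s\supseteq t^\frown\alpha$ one has $r\uhr s=r_\alpha\uhr s$ and $(r\uhr s,\dot d)$ forces $\dot d=\dot d_\alpha$, every $s\in r$ with $|s|=|t|+1+n$ satisfies $(r\uhr s,\dot d)\Vdash\tau=\check x$ for the $x$ provided by the $n$-decisiveness of $t^\frown\alpha$; hence $t$ is $(n+1)$-decisive.

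Finally, suppose toward a contradiction that $\stem(p)$ is not decisive. The contrapositive of the amalgamation claim says that for a non-decisive node $t$ the set $\{\alpha\in\osucc_p(t)\mid t^\frown\alpha\text{ is decisive}\}$ lies in $I_{|t|}$, so removing it from $\osucc_p(t)$ leaves an $I_{|t|}$-positive set, which can be shrunk into $B_{|t|}$; all its successors of $t$ are then non-decisive. Recursing down from $\stem(p)$ produces $q\leq_0 p$ with $\stem(q)=\stem(p)$ all of whose nodes are non-decisive relative to $p$ — hence, since a direct extension of $q\uhr t$ is a direct extension of $p\uhr t$, also non-decisive relative to $q$, and in particular no node of $q$ is $0$-decisive relative to $q$. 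But $(q,\dot c)\leq(p,\dot c)$ forces $\tau\in V$, so there is $(r,\dot d)\leq(q,\dot c)$ with $(r,\dot d)\Vdash\tau=\check x$ for some $x$; setting $t:=\stem(r)$ one has $t\in q$, $\stem(p)\subseteq t$, and $(r,\dot d)\leq_0(q\uhr t,\dot c)$, so $t$ is $0$-decisive relative to $q$ — a contradiction.

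I expect the main obstacle to be the bookkeeping in the amalgamation step, and specifically handling the $\dot\dQ$-coordinate: one must produce a single name $\dot d$ below the glued tree that reduces to each $\dot d_\alpha$ below $t^\frown\alpha$, and then verify that the ``decided at level $|t|+1+n$'' property is inherited by $(r\uhr s,\dot d)$ from $(r_\alpha\uhr s,\dot d_\alpha)$. Once that is in place, the rest is a routine tree fusion; note that only the $<\kappa_n$-completeness of the $I_n$ and the density of $B_n$ in $I_n^+$ are used (not any closure of $B_n$ or of $\dot\dQ$), and no appeal to the Prikry property is needed.
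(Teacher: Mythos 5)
Your argument is correct and follows essentially the same route as the paper: the paper applies the standard level-capturing lemma for Laver-style Namba trees (\cite[Lemma 6.4]{CoxKruegerNamba}) to the open dense set of $r$ admitting some $\dot e$ and $x$ with $(r,\dot e)\leq(1_{\dL},\dot c)$ forcing $\tau=\check x$, and then mixes the names $\dot e_t$ over the level-$k$ antichain, which is exactly your amalgamation step. Your decisiveness-rank argument is in effect an inlined proof of that cited lemma, carrying the $\dot{\dQ}$-coordinate through the recursion rather than reattaching it at the end.
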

	
	\begin{proof}
		It is clear that the set $D$ of all $r\in\dL((\kappa_n,I_n)_{n\in\omega})$ such that for some $\dot{e}$ and $x$, $(r,\dot{e})\leq(1_{\dL},\dot{d})$ and forces $\tau=\check{x}$, is open dense in $\dL((\kappa_n,I_n)_{n\in\omega})$. By standard statements about Laver-style Namba forcings (see \cite[Lemma 6.4]{CoxKruegerNamba}), there is $q\leq_0p$ and $k$ such that whenever $t\in q$, $|t|=k$, $q\uhr t\in D$. For any $t\in q$ with $|t|=k$, let $\dot{e}_t$ witness $q\uhr t\in D$. Since $\{q\uhr t\;|\;t\in q, |t|=k\}$ is an antichain, we can find $\dot{d}$ such that for all $t\in q$ with $|t|=k$, $q\uhr t\Vdash\dot{d}=\dot{e}_t$. It follows that $(q,\dot{d})$ is as required.
	\end{proof}
	
	We will also need the concept of a fusion sequence:
	
	\begin{mydef}
		Let $p,q\in\dL((\kappa_n,I_n)_{n\in\omega})$. For $n\in\omega$, we write $q\leq_np$ if $q\leq_0 p$ and for all $t\in q$, if $|t|\leq |\stem(p)|+n$, then $t\in p$.
		
		We say that $(p_n)_{n\in\omega}$ is a \emph{fusion sequence} if $p_{n+1}\leq_np_n$ for all $n\in\omega$.
	\end{mydef}
	
	The following is standard:
	
	\begin{myfact}
		If $(p_n)_{n\in\omega}$ is a fusion sequence of conditions in $\dL((\kappa_n,I_n)_{n\in\omega})$, then $\bigcap_{n\in\omega}p_n\in\dL((\kappa_n,I_n)_{n\in\omega})$ and extends every $p_n$.
	\end{myfact}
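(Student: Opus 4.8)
The plan is to verify directly that $p_\omega:=\bigcap_{n\in\omega}p_n$ satisfies the two defining clauses of a condition, the crucial point being that the fusion ordering forces each immediate successor set to stabilize after finitely many stages. First I would record the elementary structural facts. Since $p_{n+1}\leq_n p_n$ entails $p_{n+1}\leq_0 p_n$, all the $p_n$ share a common stem $s^*:=\stem(p_0)$ and form a $\subseteq$-decreasing chain $p_0\supseteq p_1\supseteq\cdots$; hence $p_\omega\subseteq p_n$ for every $n$, which will give $p_\omega\leq p_n$ once we know $p_\omega$ is a condition. Closure under restriction (clause (1)) is immediate: each $p_n$ is closed under restriction, and an intersection of restriction-closed sets is restriction-closed. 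Likewise $s^*\in p_\omega$ and every $s\in p_\omega$ is comparable with $s^*$, since both properties hold in each $p_n\supseteq p_\omega$; and below $s^*$ the tree $p_\omega$ is the single chain $(s^*\uhr i)_{i\leq|s^*|}$, so $s^*$ will be the stem of $p_\omega$ provided the successor sets above it are positive.

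The hard part will be clause (2): showing $\osucc_{p_\omega}(t)\in I_{|t|}^+$ for every $t\in p_\omega$ with $s^*\subseteq t$ (note $t\in p_\omega\subseteq p_n$, so $\osucc_{p_n}(t)$ is defined for all $n$). Writing $k:=|t|-|s^*|$ for the height of $t$ above the stem, the key observation is that the relation $\leq_n$ pins down the tree up to level $|s^*|+n$, so that for every $n\geq k+1$ the immediate successors of $t$ are preserved when passing from $p_n$ to $p_{n+1}$; combined with $p_{n+1}\subseteq p_n$ this yields $\osucc_{p_{n+1}}(t)=\osucc_{p_n}(t)$ for all such $n$. Thus the sets $\osucc_{p_n}(t)$ are constant, equal to some fixed $S$, once $n\geq k+1$, while for $n\leq k$ they merely contain $S$. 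Consequently
$$\osucc_{p_\omega}(t)=\{\alpha\;|\;\forall n\,(t^{\frown}\alpha\in p_n)\}=\bigcap_{n\in\omega}\osucc_{p_n}(t)=S=\osucc_{p_{k+1}}(t),$$
and the right-hand side lies in $I_{|t|}^+$ because $p_{k+1}$ is a condition and $t$ is one of its splitting nodes. This is the only place where the fusion hypothesis is used, and it is essential: without stabilization, $\bigcap_n\osucc_{p_n}(t)$ would be a decreasing intersection of $I_{|t|}$-positive sets, which an ideal that is only ${<}\,\kappa_{|t|}$-complete need not keep positive.

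Having verified both clauses, $p_\omega$ is a condition of $\dL((\kappa_n,I_n)_{n\in\omega})$ with stem $s^*$, and since $p_\omega\subseteq p_n$ we conclude $p_\omega\leq p_n$ for every $n$, as required.
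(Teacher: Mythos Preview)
Your argument is correct and is exactly the standard verification; the paper does not prove this fact at all, merely labeling it as standard. One small remark: the paper's stated condition for $q\leq_n p$ (``for all $t\in q$, if $|t|\leq|\stem(p)|+n$, then $t\in p$'') is vacuous given $q\subseteq p$; you are tacitly reading it in the intended direction, namely that every node of $p$ of length $\leq|\stem(p)|+n$ survives in $q$, and it is this reading that drives your stabilization step $\osucc_{p_{n+1}}(t)=\osucc_{p_n}(t)$ for $n\geq k+1$.
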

	
	The reason for introducing the Laver-ideal property is that by using these ideals we can show that our version of Namba forcing has a sufficiently closed direct extension ordering:
	
	\begin{mydef}\label{DefSuborder}
		Let $\dL'((\kappa_n,I_n)_{n\in\omega})$ consist of those $p\in\dL((\kappa_n,I_n)_{n\in\omega})$ such that whenever $t\in p$ and $\stem(p)\subseteq t$, $\osucc_p(t)\in B_{|t|}$.
	\end{mydef}
	
	\begin{mylem}\label{SuborderClosed}
		The suborder $\dL'((\kappa_n,I_n)_{n\in\omega})$ is dense in $\dL((\kappa_n,I_n)_{n\in\omega})$ with respect to both $\leq$ and $\leq_0$ and the direct extension ordering $\leq_0$ is ${<}\,\mu$-closed on $\dL'((\kappa_n,I_n)_{n\in\omega})$.
	\end{mylem}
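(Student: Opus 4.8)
The plan is to handle density and ${<}\,\mu$-closure by one and the same \emph{pruning} construction. For density, given $p\in\dL((\kappa_n,I_n)_{n\in\omega})$ with stem $s=\stem(p)$, I would build a subtree $q\leq_0 p$ by recursion along the levels above $s$: below $s$ let $q$ agree with $p$, and having placed a node $t$ with $s\subseteq t$ into $q$ in such a way that (inductively) $t\in p$, note that $\osucc_p(t)\in I_{|t|}^+$ by clause (2) of the definition of $\dL$; since $B_{|t|}$ is dense in $I_{|t|}^+$ by the Assumption, choose $A_t\in B_{|t|}$ with $A_t\subseteq\osucc_p(t)$ and let the immediate successors of $t$ in $q$ be exactly $\{t^\frown\alpha\;|\;\alpha\in A_t\}$. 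Then $q$ is closed under restriction, has stem $s$, and every splitting set $\osucc_q(t)$ lies in $B_{|t|}$, so $q\in\dL'((\kappa_n,I_n)_{n\in\omega})$; moreover $q\subseteq p$ and $\stem(q)=\stem(p)$, so $q\leq_0 p$. Since every $\leq_0$-extension is in particular a $\leq$-extension, density with respect to $\leq_0$ immediately yields density with respect to $\leq$.

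For ${<}\,\mu$-closure of $\leq_0$ on $\dL'((\kappa_n,I_n)_{n\in\omega})$, let $\gamma<\mu$ and let $(p_i)_{i<\gamma}$ be $\leq_0$-descending in $\dL'((\kappa_n,I_n)_{n\in\omega})$. Because direct extension preserves stems, all the $p_i$ share a common stem $s$, and $i\leq j$ implies $p_j\subseteq p_i$. I would again build a lower bound $q$ by recursion above $s$ (with $q$ agreeing with the common restriction of the $p_i$ below $s$): having placed $t$ with $s\subseteq t$ into $q$ so that $t\in p_i$ for every $i<\gamma$, the sequence $(\osucc_{p_i}(t))_{i<\gamma}$ is a $\subseteq$-descending sequence of elements of $B_{|t|}$ (each $p_i$ lies in $\dL'$ and $s=\stem(p_i)\subseteq t$) of length $\gamma<\mu$, so by the ${<}\,\mu$-closure of $B_{|t|}$ there is $A_t\in B_{|t|}$ with $A_t\subseteq\bigcap_{i<\gamma}\osucc_{p_i}(t)$; let the successors of $t$ in $q$ be $\{t^\frown\alpha\;|\;\alpha\in A_t\}$. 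The inclusion $A_t\subseteq\bigcap_{i<\gamma}\osucc_{p_i}(t)$ guarantees inductively that every node of $q$ lies in every $p_i$, so the recursion goes through and produces $q\in\dL'((\kappa_n,I_n)_{n\in\omega})$ with $\stem(q)=s$ and $q\leq_0 p_i$ for all $i$.

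I do not expect a serious obstacle; the only points needing care are (i) maintaining throughout the recursion that each node of the pruned tree is still a node of the original condition(s) — this is exactly what dictates the choices $A_t\subseteq\osucc_p(t)$, respectively $A_t\subseteq\bigcap_{i<\gamma}\osucc_{p_i}(t)$ — and (ii) using the precise form of ${<}\,\mu$-closure of the $B_n$ supplied by Lemma \ref{LIPLemma}, namely that a ${<}\,\mu$-descending sequence from $B_n$ has \emph{some} element of $B_n$ below its intersection, rather than the (false in general) assertion that the intersection itself belongs to $B_n$. Both facts are delivered directly by the standing Assumption.
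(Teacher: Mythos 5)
Your proposal is correct and follows essentially the same route as the paper, which only sketches the argument by deferring to Cox--Krueger: density is obtained by pruning the splitting sets level by level into the dense families $B_n$ (the paper's ``fusion argument''), and $\leq_0$-closure by finding, at each node, a single element of $B_{|t|}$ below the descending sequence of splitting sets. Your explicit remark that one needs an element of $B_n$ \emph{below} the intersection rather than membership of the intersection itself in $B_n$ is exactly the point the paper's construction of the $B_n$ in Lemma \ref{LIPLemma} is designed to deliver.
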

	
	\begin{proof}
		It is easily seen using a simple fusion argument and the density of each $B_n$ that $\dL'((\kappa_n,I_n)_{n\in\omega})$ is dense in $\dL((\kappa_n,I_n)_{n\in\omega})$ (for a proof, see \cite[Lemma 6.12]{CoxKruegerNamba}). Moreover, the set is clearly ${<}\,\mu$-closed because we can take the intersection of a descending sequence of conditions, using the ${<}\,\mu$-closure of each $B_n$ (see the proof of \cite[Lemma 6.12]{CoxKruegerNamba}).
	\end{proof}
	
	From now on, we will tacitly replace $\dL((\kappa_n,I_n)_{n\in\omega})$ by $\dL'((\kappa_n,I_n)_{n\in\omega})$ in order to simplify notation.
	
	Lastly we show that our iteration does not collapse $\kappa^*$ ``too much'':
	
	\begin{mylem}\label{KappaStarNotToMu}
		Let $\mu<\kappa_0$. Let $\dot{\dQ}$ be an $\dL((\kappa_n,I_n)_{n\in\omega})$-name for a ${<}\,\mu$-closed poset. Let $\kappa^*:=(\sup_n\kappa_n)^+$. After forcing with $\dL((\kappa_n,I_n)_{n\in\omega})*\dot{\dQ}$, $\cf(\kappa^*)\geq\mu$.
	\end{mylem}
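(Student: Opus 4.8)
The plan is to prove the slightly stronger statement that $\dL'((\kappa_n,I_n)_{n\in\omega})*\dot{\dQ}$ adds no cofinal function from any regular $\lambda<\mu$ into $\kappa^*$. Since $\kappa^*=(\sup_n\kappa_n)^+$ is a successor cardinal, hence regular, in $V$, this immediately gives $\cf(\kappa^*)\geq\mu$ in the extension. So suppose for contradiction that some $(p,\dot{c})$ forces $\dot{F}\colon\check{\lambda}\to\check{\kappa}^*$ to be cofinal, with $\lambda<\mu$ regular; the task is to refute this. Before that I would record that $(\dL'*\dot{\dQ},\leq_0)$ is ${<}\,\mu$-closed: this follows from the ${<}\,\mu$-closure of $\leq_0$ on $\dL'$ (Lemma~\ref{SuborderClosed}), the hypothesis that $\dot{\dQ}$ is forced ${<}\,\mu$-closed, and the usual iterability argument for ${<}\,\mu$-closure of direct extensions (cf.\ the discussion around Lemma~\ref{DirectClosureLimit}); the one point to verify is that, since $\leq_0$ refines $\leq$ on $\dL'$, a $\leq_0$-lower bound of the first coordinates of a $\leq_0$-descending sequence still forces the second coordinates to form a $\dot{\leq}_0$-descending sequence, so that the ${<}\,\mu$-closure of $\dot{\dQ}$ can then be invoked in the extension.

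The key local step is the following: whenever $(r,\dot{e})\in\dL'*\dot{\dQ}$ and $\tau$ is a name with $(r,\dot{e})\Vdash\tau<\check{\kappa}^*$, there are $(r',\dot{e}')\leq_0(r,\dot{e})$ and $\beta<\kappa^*$ with $(r',\dot{e}')\Vdash\tau<\check{\beta}$. Since $(r,\dot{e})$ forces $\tau$ to be an ordinal, it forces $\tau\in V$, so Proposition~\ref{EventualDecision} yields $(q,\dot{d})\leq_0(r,\dot{e})$ and $k\in\omega$ such that $(q\uhr t,\dot{d})\Vdash\tau=\check{x}_t$ for each $t\in q$ with $|t|=k$. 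Extending stems shows that the conditions below $(q,\dot{d})$ whose stem has length ${\geq}\,k$ are dense below $(q,\dot{d})$, so $(q,\dot{d})$ forces $\tau$ into $\{x_t\;|\;t\in q,\ |t|=k\}$. But the number of $t\in q$ of length $k$ is at most $\prod_{j<k}\kappa_j$, a finite product of cardinals below $\sup_n\kappa_n$ and hence itself below $\sup_n\kappa_n<\kappa^*$; as $\kappa^*$ is regular, $\beta:=\sup\{x_t+1\;|\;t\in q,\ |t|=k\}<\kappa^*$, and $(r',\dot{e}'):=(q,\dot{d})$ works.

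Now I would run a fusion-style recursion along $\lambda$, building a $\leq_0$-descending sequence $((q_i,\dot{d}_i))_{i\leq\lambda}$ with $(q_0,\dot{d}_0)=(p,\dot{c})$. At a successor, apply the local step to $(q_i,\dot{d}_i)$ and a name for $\dot{F}(\check{i})$ to obtain $(q_{i+1},\dot{d}_{i+1})\leq_0(q_i,\dot{d}_i)$ and $\beta_i<\kappa^*$ with $(q_{i+1},\dot{d}_{i+1})\Vdash\dot{F}(\check{i})<\check{\beta}_i$. At a limit $i\leq\lambda$ the sequence constructed so far has length ${<}\,\mu$, so the ${<}\,\mu$-closure of $\leq_0$ on $\dL'*\dot{\dQ}$ provides a $\leq_0$-lower bound $(q_i,\dot{d}_i)$. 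Then $(q_\lambda,\dot{d}_\lambda)\Vdash\dot{F}(\check{i})<\check{\beta}_i$ for all $i<\lambda$, so $(q_\lambda,\dot{d}_\lambda)$ forces $\dot{F}$ to be bounded by $\sup_{i<\lambda}\beta_i$, which is ${<}\,\kappa^*$ since $\lambda<\mu<\kappa_0<\kappa^*$ and $\kappa^*$ is regular. But $(q_\lambda,\dot{d}_\lambda)\leq_0(p,\dot{c})$ forces $\dot{F}$ cofinal in $\check{\kappa}^*$, a contradiction. Hence no condition forces such an $\dot{F}$, and $\cf(\kappa^*)\geq\mu$ holds in the extension.

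I expect the main obstacle to be the local step, and specifically the observation that a single direct extension already confines a name for an ordinal below $\kappa^*$ to a set of size ${<}\,\kappa^*$: this is precisely where the Namba structure enters, via Proposition~\ref{EventualDecision} together with the crude bound $\prod_{j<k}\kappa_j<\kappa^*$ on the width of a level of a Namba tree. Everything else — the two-step closure of $\leq_0$ and the transfinite recursion — is routine bookkeeping; the only mild subtlety worth spelling out is that $(\dL'*\dot{\dQ},\leq_0)$ is not literally the two-step iteration of the direct-extension orders, but the standard closure argument still applies because $\leq_0$ refines $\leq$.
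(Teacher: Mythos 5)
Your proof is correct and follows essentially the same route as the paper: confine a name for an ordinal below $\kappa^*$ to a set of size ${<}\,\kappa^*$ using a single level of the Namba tree (the paper invokes the argument of Proposition~\ref{EventualDecision} for $\dL$ alone), then use the ${<}\,\mu$-closure of the direct extension ordering to run the recursion of length ${<}\,\mu$. The only cosmetic difference is that you carry $\dot{\dQ}$ through the recursion on the two-step iteration, whereas the paper first establishes the statement for $\dL$ and then disposes of $\dot{\dQ}$ at the end by its closure.
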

	
	\begin{proof}
		This is classical for variants of Namba forcing. If $\tau$ is an $\dL((\kappa_n,I_n)_{n\in\omega})$-name for an ordinal below $\kappa^*$, forced by $p$, there is a ${<}\,\kappa^*$-sized set $x$ and $p'\leq_0p$ such that $p\Vdash\tau\in\check{x}$: Let $D$ be the open dense set of all $q\in\dL((\kappa_n,I_n)_{n\in\omega})$ which force $\tau=\check{\alpha}_q$ for some $\alpha_q$. As in the proof of Lemma \ref{EventualDecision}, there is $p'\leq_0p$ and $k$ such that whenever $t\in p'$, $|t|=k$, $p'\uhr t\in D$, witnessed by some $\alpha_t$. It follows that $p'\Vdash\tau\in\{\alpha_t\;|\;t\in p'\wedge|t|=k\}$ and the latter set has size ${<}\,\kappa^*$.
		
		The previous statement combined with the ${<}\,\mu$-closure of a dense subset of the direct extension ordering on $\dL((\kappa_n,I_n)_{n\in\omega})$ (see Lemma \ref{SuborderClosed}) shows that no unbounded function from any $\gamma<\mu$ into $\kappa^*$ is added. This is forced to be preserved by $\dot{\dQ}$ by its closure.
	\end{proof}

	We can now prove that $\dL((\kappa_n,I_n)_{n\in\omega})$ and any iteration of it with a ${<}\,\mu$-closed forcing notion has the stated weak form of the ${<}\,\mu$-approximation property.

	\begin{proof}[Proof of Theorem \ref{ApproxProp}]
		For simplicity, define $\dL:=\dL((\kappa_n,I_n)_{n\in\omega})$ and assume $\dot{F}$ is as in the statement of the theorem. Suppose toward a contradiction that there is a condition $(p^*,\dot{q}^*)\in\dL*\dot{\dQ}$ which forces ``$\forall i<\mu(\dot{F}\uhr i\in V)$''. Assume for notational simplicity that $(p^*,\dot{q}^*)=1_{\dL*\dot{\dQ}}$.
		
		For $i<\mu$ and $(q,\dot{d})\in\dL*\dot{\dQ}$ let $\phi(i,q,\dot{d})$ state that there exists a sequence $(A_{\alpha})_{\alpha\in\osucc_q(\stem(q))}$ such that for all $\alpha,\beta\in\osucc_q(\stem(q))$, $|A_{\alpha}|<\kappa^*$, $A_{\alpha}$ and $A_{\beta}$ are disjoint for $\alpha\neq\beta$ and $(q\uhr(\stem(q)^{\frown}\alpha),\dot{d})\Vdash\dot{F}\uhr\check{i}\in\check{A}_{\alpha}$.
		
		\setcounter{myclaim}{0}
		
		\begin{myclaim}\label{Claim1}
			If $j<\mu$ and $(p,\dot{c})\in\dL*\dot{\dQ}$, there is some $i\in(j,\mu)$ and $(q,\dot{d})\leq_0(p,\dot{c})$ such that $\phi(i,q,\dot{d})$ holds.
		\end{myclaim}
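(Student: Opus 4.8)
The strategy is to handle the immediate successors of $\stem(p)$ one at a time — pushing the relevant values of $\dot{F}$ upward so as to force disjointness — and then to amalgamate. Write $s:=\stem(p)$ and $S:=\osucc_p(s)\in I_{|s|}^+$; for $\alpha\in S$ the condition $(p\uhr(s^\frown\alpha),\dot{c})$ has stem $s^\frown\alpha$ and still forces $\dot{F}$ to be cofinal in $\kappa^*$ with every proper initial segment in $V$. First I would, for each $\alpha\in S$, apply Proposition \ref{EventualDecision} to the name $\dot{F}\uhr(j+1)$ (which is forced to lie in $V$) to obtain a direct extension $(q^0_\alpha,\dot{d}^0_\alpha)\leq_0(p\uhr(s^\frown\alpha),\dot{c})$ and some $k_\alpha<\omega$ such that every $t\in q^0_\alpha$ with $|t|=k_\alpha$ forces $\dot{F}\uhr(j+1)$ to equal a fixed $\check{x}_t$; since there are at most $\kappa_{k_\alpha-1}<\kappa^*$ such $t$, the set $B_\alpha:=\{x_t : t\in q^0_\alpha,\ |t|=k_\alpha\}$ has size $<\kappa^*$ and $(q^0_\alpha,\dot{d}^0_\alpha)\Vdash\dot{F}\uhr(j+1)\in\check{B}_\alpha$. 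As $|\bigcup_{\alpha\in S}B_\alpha|\leq\kappa_{|s|}\cdot\sup_n\kappa_n<\kappa^*$ and $\kappa^*$ is regular, I can fix $\gamma_0<\kappa^*$ strictly above every ordinal in the range of every function in $\bigcup_{\alpha\in S}B_\alpha$.

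The difficulty is that the $B_\alpha$ (and the analogous sets produced below) need not be pairwise disjoint, which is exactly what $\phi$ demands. To overcome this I would use the cofinality of $\dot{F}$ to send the relevant coordinates above any prescribed bound. Fix an enumeration $S=\{\alpha_\xi : \xi<\theta\}$ with $\theta:=|S|\leq\kappa_{|s|}<\kappa^*$, and recurse on $\xi$, maintaining an increasing continuous sequence $(\gamma_\xi)_{\xi\leq\theta}$ of ordinals below $\kappa^*$ (this stays below $\kappa^*$ by regularity, since $\theta<\kappa^*$). At stage $\xi$ I work below $(q^0_{\alpha_\xi},\dot{d}^0_{\alpha_\xi})$: since $\dot{F}$ is forced cofinal and $\dL*\dot{\dQ}$ is a Prikry-type forcing whose direct extension ordering is ${<}\,\mu$-closed (by Lemma \ref{SuccStep}, Lemma \ref{SuborderClosed}, and the hypothesis that $\dot{\dQ}$ is ${<}\,\mu$-closed), Lemma \ref{PrikryPumpUp} allows me to find a direct extension $(q^1_{\alpha_\xi},\dot{d}^1_{\alpha_\xi})$ deciding the least $i<\mu$ with $\dot{F}(i)\geq\gamma_\xi$, yielding a fixed ordinal $i_\xi<\mu$ with $(q^1_{\alpha_\xi},\dot{d}^1_{\alpha_\xi})\Vdash\dot{F}(i_\xi)\geq\gamma_\xi$; because below $q^0_{\alpha_\xi}$ the function $\dot{F}\uhr(j+1)$ lies in $\check{B}_{\alpha_\xi}$, whose members have range below $\gamma_0\leq\gamma_\xi$, we must have $i_\xi>j$. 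Applying Proposition \ref{EventualDecision} once more, to $\dot{F}\uhr(i_\xi+1)\in V$, I get $(q_{\alpha_\xi},\dot{d}_{\alpha_\xi})\leq_0(q^1_{\alpha_\xi},\dot{d}^1_{\alpha_\xi})$ and a set $A_{\alpha_\xi}$ of size $<\kappa^*$ with $(q_{\alpha_\xi},\dot{d}_{\alpha_\xi})\Vdash\dot{F}\uhr(i_\xi+1)\in\check{A}_{\alpha_\xi}$, where now every $f\in A_{\alpha_\xi}$ satisfies $f(i_\xi)\geq\gamma_\xi$. I then let $\gamma_{\xi+1}<\kappa^*$ be strictly above every ordinal in the range of every function in $A_{\alpha_\xi}$, and take suprema at limit stages.

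After the recursion the ordinals $(i_\xi)_{\xi<\theta}$ all lie in $(j,\mu)$, and since $\mu<\kappa_{|s|}$ while $I_{|s|}$ is ${<}\,\kappa_{|s|}$-complete, there are a single $i^*\in(j,\mu)$ and an $I_{|s|}$-positive $S^*\subseteq S$ with $i_\xi=i^*$ for all $\alpha_\xi\in S^*$; shrinking further, by density of $B_{|s|}$, I may assume $S^*\in B_{|s|}$. For $\alpha_\xi,\alpha_\eta\in S^*$ with $\xi<\eta$, any $f\in A_{\alpha_\xi}$ has $f(i^*)<\gamma_{\xi+1}\leq\gamma_\eta$ while any $g\in A_{\alpha_\eta}$ has $g(i^*)\geq\gamma_\eta$, so $f\neq g$; hence $(A_\alpha)_{\alpha\in S^*}$ is a pairwise disjoint family of sets of size $<\kappa^*$. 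Finally let $q$ be the condition with $\stem(q)=s$, $\osucc_q(s)=S^*$, and $q\uhr(s^\frown\alpha)=q_\alpha$ for $\alpha\in S^*$; then $q\leq_0 p$ (it is a subtree of $p$ with the same stem), and since $\{q\uhr(s^\frown\alpha) : \alpha\in S^*\}$ is a maximal antichain below $q$ I can amalgamate the $\dot{d}_\alpha$ into a single name $\dot{d}$ with $q\uhr(s^\frown\alpha)\Vdash\dot{d}=\dot{d}_\alpha$, giving $(q,\dot{d})\leq_0(p,\dot{c})$. Then $(A_\alpha)_{\alpha\in S^*}$ witnesses $\phi(i^*+1,q,\dot{d})$, and $i^*+1\in(j,\mu)$ since $\mu$ is a limit ordinal.

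I expect the genuinely delicate step to be the one pinning down the coordinate $i_\xi$ by a direct extension: this uses both the ${<}\,\mu$-closure of the direct extension ordering on $\dL*\dot{\dQ}$ and, via Lemma \ref{PrikryPumpUp}, that $\mu$ is not a strong limit — which holds throughout our applications, where $\mu$ is a successor cardinal. Everything else is routine manipulation of fronts, fusion, and amalgamation over maximal antichains.
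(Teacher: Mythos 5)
Your proof is correct and follows the same skeleton as the paper's: a recursion over $\osucc_p(\stem(p))$ in which each step applies Proposition \ref{EventualDecision} to produce a front deciding an initial segment of $\dot F$, uses the cofinality of $\dot F$ together with a direct-extension decision of an ordinal below $\mu$ to pin down an index $i_\alpha>j$, and finally stabilizes $i_\alpha$ on an $I_{|\stem(p)|}$-positive set before amalgamating the conditions and mixing the $\dot{\dQ}$-names over the resulting maximal antichain. The one place you genuinely diverge is the mechanism for disjointness: the paper forces $\dot F\uhr i_\alpha$ to avoid the union $B=\bigcup_{\beta<\alpha}A_\beta$ of the previously produced sets directly (using that $B$ has size ${<}\,\kappa^*$ while $\dot F$ is forced to be cofinal), whereas you run an increasing continuous sequence of thresholds $\gamma_\xi<\kappa^*$ and separate the $A_\alpha$'s by the value at the stabilized coordinate $i^*$. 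Both work; your version makes the disjointness visible coordinatewise, at the cost of the extra bookkeeping with the $\gamma_\xi$'s. One caveat: your appeal to Lemma \ref{PrikryPumpUp}(3) to decide, by a direct extension of a condition in $\dL*\dot{\dQ}$, the least $i$ with $\dot F(i)\geq\gamma_\xi$ requires $\mu$ not to be a strong limit, which is not among the hypotheses of Theorem \ref{ApproxProp} (though it holds in every application in the paper, where $\mu=\aleph_n$). The paper at the corresponding step cites Fact \ref{PrikryProperty}, which is stated only for $\dL$ and for ordinals bounded below $\kappa_{|\stem(p)|}$, and must likewise be transported to the two-step iteration; so you are no worse off than the paper here, and you are more explicit about what is actually being used.
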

		\begin{proof}
			Let $W:=\osucc_p(\stem(p))$. By induction on $\alpha\in W$ we will define a sequence of conditions $(q_{\alpha},\dot{d}_{\alpha})_{\alpha\in W}$ with $(q_{\alpha},\dot{d}_{\alpha})\leq_0(q\uhr(\stem(q)^{\frown}\alpha),\dot{c})$, a sequence of natural numbers $(n_{\alpha})_{\alpha\in W}$, a sequence of ordinals $(i_{\alpha})_{\alpha\in W}$ below $\mu$, and sets $(A_{\alpha})_{\alpha\in W}$ of cardinality ${<}\,\kappa^*$.
			
			Let $\alpha=\min(W)$. Choose any $i_{\alpha}\in(j,\mu)$. Apply Proposition \ref{EventualDecision} to find $(q_{\alpha},\dot{d}_{\alpha})\leq_0(p\uhr(\stem(p)^{\frown}\alpha),\dot{c})$ and $n_{\alpha}\in\omega$ such that whenever $s\in q_{\alpha}$, $|s|=n_{\alpha}$, $(q_{\alpha}\uhr s,\dot{d}_{\alpha})$ decides $\dot{F}\uhr i_{\alpha}$. Let
			$$A_{\alpha}:=\{a\;|\;\exists t\in q_{\alpha}(|t|=n_{\alpha}\wedge (q_{\alpha}\uhr t,\dot{d}_{\alpha})\Vdash\dot{F}\uhr i_{\alpha}=\check{a})\}$$
			Clearly, $|A_{\alpha}|<\kappa^*$.
			
			Now suppose all objects have been defined for $\beta\in W\cap\alpha$ and $\alpha\in W$. Let $B:=\bigcup_{\beta\in W\cap\alpha}A_{\beta}$. Then $|B|<\kappa^*$. So in particular,
			$$(p\uhr (\stem(p)^{\frown}\alpha),\dot{c})\Vdash\{\dot{F}\uhr i\;|\;i\in(j,\mu)\}\not\subseteq \check{B}$$
			since every $\dot{F}\uhr i$ is forced to be bounded in $\kappa^*$ (by Lemma \ref{KappaStarNotToMu}), the negation of the above statement would imply that $\dot{F}$ is forced by some condition to be bounded in $\kappa^*$, a contradiction.
			
			Therefore $(p\uhr (\stem(p)^{\frown}\alpha),\dot{c})\Vdash\exists i\in(j,\mu)(\dot{F}\uhr i\notin \check{B})$. By Fact \ref{PrikryProperty} there exists $(q_{\alpha}',\dot{d}_{\alpha}')\leq_0(p\uhr (\stem(p)^{\frown}\alpha),\dot{c})$ deciding $\dot{i}=\check{i}_{\alpha}$ for some $i_{\alpha}<\mu$. So we have
			$$(q_{\alpha}',\dot{d}_{\alpha}')\Vdash\dot{F}\uhr\check{i}_{\alpha}\notin\check{B}$$
			Now simply proceed as in the base case and find $(q_{\alpha},\dot{d}_{\alpha})\leq_0(q_{\alpha}',\dot{d}_{\alpha}')$ and $n_{\alpha}$ such that whenever $t\in q_{\alpha}$ has length $n_{\alpha}$, $(q_{\alpha}\uhr t, \dot{d}_{\alpha})$ decides $\dot{F}\uhr\check{i}_{\alpha}$ and let $A_{\alpha}$ be the set of possible decisions. Then clearly $|A_{\alpha}|<\kappa^*$, $A_{\alpha}\cap B=\emptyset$ and thus $A_{\alpha}\cap A_{\beta}=\emptyset$ for all $\beta<\alpha$.
			
			Finally we can choose $W'\subseteq W$ which is $I_{|\stem(p)|}$-positive such that there exists $i<\mu$ such that for all $\alpha\in W'$, $i_{\alpha}=i$. Let $q:=\bigcup_{\alpha\in W'}q_{\alpha}$ which is clearly a direct extension of $p$ and let $\dot{d}$ be such that $q_{\alpha}\Vdash\dot{d}=\dot{d}_{\alpha}$ for every $\alpha\in W'$. Clearly $(q,\dot{d})$ is as required.
		\end{proof}
		
		We now define a game $\mathcal{G}_k$ for every $k<\mu$. The game is defined as follows: Player I starts by playing a pair $(Z_0,\delta_0)$ where $Z_0\subseteq\kappa_0$, $Z_0\in I_0$ and $\delta_0<k$. After $(Z_n,\delta_n)$ has been played, Player II chooses $\alpha\in\osucc_{q_{n-1}}(\stem(q_{n-1}))\smallsetminus Z_n$ and $(q_n,\dot{d}_n)\leq_0(q_{n-1}\uhr(\stem(q_{n-1})^{\frown}\alpha),\dot{d}_{n-1})$ and some $i_n\in(\delta_n,k)$ such that $\phi(q_n,\dot{d}_n,i_n)$ holds (we let $(q_{-1},\dot{d}_{-1}):=1_{\dL*\dot{\dQ}}$). Then Player I responds by playing $(Z_n,\delta_n)$ such that $Z_n\subseteq\osucc_{q_n}(\stem(q_n))$, $Z_n\in I_n$ and $\delta_n<k$ (note we are not requiring $\delta_n>i_n$). Player II loses if they cannot play at some stage $n$. Otherwise, they win.
		
		\begin{myclaim}
			There is $k\in\mu\cap\cof(\omega)$ such that Player II has a winning strategy in $\mathcal{G}_k$.
		\end{myclaim}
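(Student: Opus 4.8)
The plan is to bypass any determinacy argument and instead exhibit directly, by a single construction in the direct extension ordering, a master condition below which Player~II wins the game for a carefully chosen $k$. Concretely I want to produce $q^{\infty}\in\dL$ with $\stem(q^{\infty})=\emptyset$, an $\dL$-name $\dot{d}^{\infty}$ for an element of $\dot{\dQ}$, and ordinals $i^{(m)}_{j}<\mu$ for $m,j\in\omega$, with $k:=\sup\{i^{(m)}_{j}:m,j\in\omega\}$, such that: (a) for each $m$ the sequence $(i^{(m)}_{j})_{j}$ is strictly increasing and cofinal in $k$; (b) $i^{(0)}_{0}>\omega$, so that $k\in\mu\cap\cof(\omega)$ (note $k<\mu$ since $\mu$ is regular uncountable and $k$ is a countable supremum); and (c) for every $u\in q^{\infty}$ with $\stem(q^{\infty})\subseteq u$ and every $j\in\omega$, writing $m=|u|$, the statement $\phi(i^{(m)}_{j},q^{\infty}\uhr u,\dot{d}^{\infty})$ holds.

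Granting such a package, Player~II wins $\mathcal{G}_{k}$ simply by staying below $q^{\infty}$. Suppose at the start of round $n$ the current condition is $(q^{\infty}\uhr s,\dot{d}^{\infty})$ with $s=\stem(q_{n-1})$, $|s|=n$, and Player~I plays $(Z_{n},\delta_{n})$ with $Z_{n}\in I_{n}$ and $\delta_{n}<k$. Since $\osucc_{q^{\infty}}(s)\in I_{n}^{+}$ while $Z_{n}\in I_{n}$, the set $\osucc_{q^{\infty}}(s)\setminus Z_{n}$ is $I_{n}$-positive, so we may pick $\alpha_{n}$ in it; put $q_{n}:=q^{\infty}\uhr(s^{\frown}\alpha_{n})$ and $\dot{d}_{n}:=\dot{d}^{\infty}$, which is a direct extension of $(q_{n-1}\uhr(s^{\frown}\alpha_{n}),\dot{d}_{n-1})$; finally, as $(i^{(n+1)}_{j})_{j}$ is cofinal in $k>\delta_{n}$, choose $j$ with $i^{(n+1)}_{j}>\delta_{n}$ and set $i_{n}:=i^{(n+1)}_{j}$. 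By (c), $\phi(i_{n},q_{n},\dot{d}_{n})$ holds and $i_{n}\in(\delta_{n},k)$, so $(\alpha_{n},q_{n},\dot{d}_{n},i_{n})$ is a legal move. Hence Player~II is never stuck and wins $\mathcal{G}_{k}$, which proves the Claim.

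The construction of $q^{\infty}$ is a $\leq_{0}$-descending $\omega$-sequence $(q_{s},\dot{d}_{s})_{s\in\omega}$ in $\dL'*\dot{\dQ}$ starting from $1_{\dL*\dot{\dQ}}$; its lower bound exists because $\leq_{0}$ is $<\mu$-closed on $\dL'$ by Lemma~\ref{SuborderClosed} and $\dot{\dQ}$ is $<\mu$-closed, with $\omega<\mu$. The point distinguishing this from a fusion along $\leq$ is that there is no constraint forcing the level being modified to grow with $s$, so each fixed level can be thinned infinitely often. Fix a bijection $s\mapsto(m_{s},j_{s})$ of $\omega$ onto $\omega\times\omega$ in which each first coordinate $m$ occurs cofinally and, for fixed $m$, the second coordinates enumerate $\omega$ in order. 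At step $s$, let $\eta_{s}:=\sup(\{\omega\}\cup\{i^{(m_{t})}_{j_{t}}:t<s\})<\mu$; apply Lemma~\ref{Claim1} at every node $u$ of $q_{s-1}$ of length $m_{s}$ to $(q_{s-1}\uhr u,\dot{d}_{s-1})$ with bound $\eta_{s}$, obtaining ordinals $i_{u}\in(\eta_{s},\mu)$ together with direct extensions and witnessing partitions; then thin $q_{s-1}$ level by level from the stem up to length $m_{s}-1$, at each node $w$ keeping an $I_{|w|}$-positive set of successors on which the assigned value is constant (possible since, by $<\kappa_{|w|}$-completeness of $I_{|w|}$ and $\mu<\kappa_{0}\le\kappa_{|w|}$, an $I_{|w|}$-positive set is not a union of ${\le}\,\mu$ ideal sets), and finally shrinking once more so that all surviving successor sets land in the $B_{|w|}$'s. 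After this, all surviving length-$m_{s}$ nodes carry one common value $i^{(m_{s})}_{j_{s}}>\eta_{s}$; glue the corresponding $\dot{\dQ}$-names over the length-$m_{s}$ antichain to obtain $\dot{d}_{s}\leq_{0}\dot{d}_{s-1}$. Since $i^{(m_{s})}_{j_{s}}>\eta_{s}\geq i^{(m_{t})}_{j_{t}}$ for $t<s$, the chosen values increase strictly along the enumeration, which makes each subsequence $(i^{(m)}_{j})_{j}$ cofinal in $k=\sup_{s}i^{(m_{s})}_{j_{s}}$; taking $\eta_{0}=\omega$ yields $k>\omega$. Finally, a stronger tree together with a stronger $\dot{\dQ}$-condition preserve all forced memberships in a partition witnessing $\phi$ (one just restricts the partition to the surviving successors), so every $\phi$-statement produced at an earlier step survives to the lower bound $(q^{\infty},\dot{d}^{\infty})$, which gives~(c); and since the only thinnings used are $\leq_{0}$-extensions, $\stem(q^{\infty})=\emptyset$.

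I expect the main obstacle to lie in the bookkeeping of the construction rather than in any new idea: carrying out the level-by-level uniformization so that a single ordinal $i^{(m_s)}_{j_s}$ works at every surviving node of length $m_{s}$ (this is really an iteration, through levels $\stem$ up to $m_s$, of the mechanism already inside the proof of Lemma~\ref{Claim1}), and threading the names $\dot{d}_{s}$ coherently through the successive antichains at different levels so that a single lower bound $\dot{d}^{\infty}$ is legitimate below $q^{\infty}$. These steps are routine but delicate; the real content is Lemma~\ref{Claim1} together with the observation that a $<\mu$-closed direct extension ordering permits unboundedly many thinnings at each fixed level, so that the supremum $k$ of the finitely-many-per-level targets can be kept below $\mu$ while each level's target sequence remains cofinal in $k$.
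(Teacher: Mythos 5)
Your proposal is correct, but it proves the Claim by a genuinely different route than the paper. The paper argues by contradiction: the game $\mathcal{G}_k$ is open for Player I (Player II loses exactly when stuck at a finite stage), so by Gale--Stewart each $\mathcal{G}_i$ is determined; assuming Player I wins every $\mathcal{G}_i$, one takes an elementary submodel $M$ of size ${<}\,\mu$ containing all the strategies with $k:=M\cap\mu\in\mu\cap\cof(\omega)$, and defeats $\sigma_k$ by diagonalizing against all strategies $\sigma_i$ with $i\in M$ simultaneously (the union of their played ideal sets is still in the ideal, and Claim \ref{Claim1} applied inside $M$ keeps the chosen $i_n$ below $k$). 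You instead build, by a $\leq_0$-descending $\omega$-sequence in $\dL'*\dot{\dQ}$, a single master condition $(q^\infty,\dot{d}^\infty)$ carrying, at each level $m$, a sequence of ordinals $(i^{(m)}_j)_j$ cofinal in a common $k=\sup_{s}i^{(m_s)}_{j_s}<\mu$ of cofinality $\omega$, such that $\phi(i^{(m)}_j,q^\infty\uhr u,\dot{d}^\infty)$ holds at every node $u$ of length $m$; Player II then wins $\mathcal{G}_k$ by never leaving $q^\infty$. Your argument is constructive (no determinacy, no submodel) and yields an explicit strategy, at the price of two extra verifications that you correctly identify and that do go through: (i) the level-$m_s$ ordinals $i_u$ produced by Claim \ref{Claim1} can be uniformized to a single value by a downward recursion from level $m_s$ to the root, using that each $I_l$ is ${<}\,\kappa_l$-complete with $\mu<\kappa_0$ so a positive set is not a union of $\mu$ many ideal sets (and a final shrink into the $B_l$'s keeps you in $\dL'$); and (ii) since all thinnings are direct extensions, the ${<}\,\mu$-closure of $\leq_0$ on $\dL'$ (Lemma \ref{SuborderClosed}) together with the closure of $\dot{\dQ}$ supplies the lower bound, and $\phi$-statements survive direct extension by restricting the witnessing partition to the surviving successors. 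Both approaches rest entirely on Claim \ref{Claim1}; the paper's is shorter, yours trades the determinacy/submodel machinery for bookkeeping of exactly the kind already present in the proof of Claim \ref{Claim1} and arguably makes the subsequent fusion argument more transparent, since the winning strategy it produces is simply ``stay inside $q^\infty$.''
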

		
		\begin{proof}
			Assume otherwise. By the Gale-Stewart theorem and since the game is open, Player I has a winning strategy $\sigma_i$ for every $i\in\mu\cap\cof(\omega)$. Let $M\prec H(\Theta)$ be of size ${<}\,\mu$ such that $M$ contains everything relevant (in particular, $(\sigma_i)_{i\in\mu\cap\cof(\omega)}\in M$) and $k:=M\cap\mu\in\mu\cap\cof(\omega)$. We will construct a run of the game in which Player I loses despite using $\sigma_k$. Let $(Z_0,\delta_0)$ be the opening move chosen by $\sigma_k$. At stage $n$, let $W$ be the set of indices $i\in\mu\cap\cof(\omega)$ such that $(q_0,\dot{d}_0,i_0),\dots,(q_{n-1},\dot{d}_{n-1},i_{n-1})$ is a legal sequence of Player II's moves in a game where Player I follows the strategy $\sigma_i$. $W$ is nonempty (as witnessed by $k$) and is in $M$ by elementarity. Let
			$$Y:=\bigcup\{Z_i\;|\;i\in W,\sigma_i((q_0,\dot{d}_0,i_0),\dots,(q_{n-1},\dot{d}_{n-1},i_{n-1}))=(Z_i,\delta_i)\}$$
			Note that $Y\in M$. Furthermore, since $|W|\leq\mu$, $Y\in I_n$. So let $\alpha$ be any element of $\osucc_{q_{n-1}}(\stem(q_{n-1}))\smallsetminus Y$ and apply Claim \ref{Claim1} to $(q_{n-1}\uhr\stem(q_{n-1})^{\frown}\alpha,\dot{d}_{n-1},\delta_n)$ to obtain $(q_n,\dot{d}_n,i_n)$. By elementarity, we can assume $(q_n,\dot{d}_n,i_n)\in M$, so $i_n<k$ and the move is allowed.
		\end{proof}
		
		Now we build a fusion sequence in such a way that any stronger condition deciding $\dot{F}\uhr\check{k}$ will also decide the generic sequence for $\dL$, thereby obtaining a contradiction.
		
		Let $(\delta_n)_{n\in\omega}$ be a sequence converging to $k$. By induction on $n\in\omega$ we will define a fusion sequence $(p_n)_{n\in\omega}$ and a sequence $(\dot{c}_n)_{n\in\omega}$ such that $p_{n+1}\Vdash\dot{c}_{n+1}\leq\dot{c}_n$ and such that:
		\begin{myass}
			For all $n<\omega$, for all $t\in p_n$ with $|t|=n$, there is a sequence $Z_0^t,\dots,Z_n^t$ such that
			$$(Z_0^t,\delta_0),(p_0\uhr(t\uhr 1),\dot{c}_0,i_0),(Z_1^t,\delta_1),(p_0\uhr(t\uhr 2)),\dots,(Z_n^t,\delta_n),(p_n\uhr t,\dot{c}_n,i_n)$$
			is a run of the game $\mathcal{G}_k$ where Player II played according to their winning strategy.
		\end{myass}
		For convenience, define $(p_{-1},\dot{c}_{-1}):=1_{\dL*\dot{\dQ}}$. Assume we have defined $(p_{n-1},\dot{c}_{n-1})$. Let $t\in p_{n-1}$ with $|t|=n$ and let $S_t$ be the set of all $\alpha\in\osucc_{p_{n-1}}(t)$ such that there is some $Z_n^{\alpha}$ such that the winning strategy for Player II applied to
		\begin{align*}
			(Z_0^t,\delta_0),(p_0\uhr(t\uhr 1),\dot{c}_0,i_0),(Z_1^t,\delta_1), \\
			(p_0\uhr(t\uhr 2)),\dots,(Z_n^t,\delta_n),(p_n\uhr t,\dot{c}_n,i_n),(Z_n^{\alpha},\delta_n)
		\end{align*}
		produces $(q_n,\dot{d}_n,i_n)$ where $(q_n,\dot{d}_n)\leq_0(p_{n-1}\uhr t^{\frown}\alpha,\dot{c}_{n-1})$. We claim that $S_t\in I_n^+$. Otherwise $(S_t,\delta_n)$ would be a legal move for player I at stage $n$ with no possible response by player II. So for each $t\in p_{n-1}$ with $|t|=n$ and $\alpha\in S_t$, let $(q_{t,\alpha},\dot{d}_{t,\alpha},i_{t,\alpha})$ be the response of player II according to their winning strategy to $(Z_n^{\alpha},\delta_n)$. Let $p_n:=\bigcup\{q_{t,\alpha}\;|\;t\in p_{n-1},|t|=n,\alpha\in S_t\}$ and let $\dot{d}_n$ be such that $q_{t,\alpha}\Vdash\dot{d}_n=\dot{d}_{t,\alpha}$ for each $t\in p_{n-1}$, $|t|=n$ and $\alpha\in S_t$.
		
		Now let $q:=\bigcap_{n\in\omega}p_n$ be the fusion limit of $(p_n)_{n\in\omega}$ and let $\dot{d}$ be an $\dL$-name for a lower bound of $(\dot{d}_n)_{n\in\omega}$, forced by $q$. We claim that no extension of $(q,\dot{d})$ decides $\dot{F}\uhr\check{k}$. Assume toward a contradiction that $(r,\dot{e})\leq(q,\dot{d})$ forces $\dot{F}\uhr\check{k}=\check{g}$ for some $g\in V$.
		
		Let $b$ be a function on $\omega$ defined by induction as follows such that $b\uhr n\in r$ for all $n\in\omega$. Let $b\uhr|\stem(r)|=r$. If $b\uhr n$ has been defined, then we know by construction that $\phi(i_t,r\uhr(b\uhr n),\dot{d})$ holds and thus there is exactly one $\alpha\in\osucc_{r\uhr(b\uhr n)}(b\uhr n)$ such that $g\uhr i_t\in A_{\alpha}$. Let $b(n):=\alpha$.
		
		Let $H*G$ be $\dL*\dot{\dQ}$-generic containing $(r,\dot{e})$. Then:
		
		\begin{myclaim}
			For every $s\in H$, $\stem(s)=b\uhr|\stem(s)|$.
		\end{myclaim}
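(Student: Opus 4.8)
The plan is to identify the branch through $\dL$ coded by the generic filter $H$ with the branch $b$, by induction on length. First I would record two routine facts about $\dL=\dL((\kappa_n,I_n)_{n\in\omega})$: (first) if $q\leq p$ in $\dL$ then $\stem(q)\supseteq\stem(p)$ and every node of $q$ is $\subseteq$-comparable with $\stem(q)$, because a proper initial segment of $\stem(p)$ cannot be the stem of a subtree of $p$ (its successor set inside that subtree would have to be $I$-positive, hence nonempty and non-singleton, while $p$ does not split below $\stem(p)$); (second) for each $n$ the set $\{s\in\dL:|\stem(s)|\geq n\}$ is dense, since every $\osucc_p(t)\in I_{|t|}^+$ is nonempty. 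From the first fact, any two conditions in $H$ have $\subseteq$-comparable stems (look at a common extension in $H$), so $b_H:=\bigcup_{s\in H}\stem(s)$ is a function; by the second fact it has domain $\omega$; and $\stem(s)=b_H\uhr|\stem(s)|$ for every $s\in H$. Thus the claim is precisely the assertion $b_H=b$, and since $b\uhr|\stem(r)|=\stem(r)$ by definition while $r\in H$ gives $\stem(r)\subseteq b_H$, it suffices to prove $b_H(n)=b(n)$ for all $n\geq|\stem(r)|$, by induction on $n$.

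For the inductive step, assume $b_H\uhr n=b\uhr n$ and put $\alpha^*:=b_H(n)$, so $(b\uhr n)^{\frown}\alpha^*=b_H\uhr(n+1)$. Using the second fact, pick $s\in H$ with $|\stem(s)|\geq n+1$, so $(b\uhr n)^{\frown}\alpha^*\subseteq\stem(s)$, and let $s'\in H$ with $s'\leq s,r$. By the first fact, $\stem(s')\supseteq(b\uhr n)^{\frown}\alpha^*$ and $\stem(s')\in s'\subseteq r$, so $(b\uhr n)^{\frown}\alpha^*\in r$ (as $r$ is closed under restriction); in particular $\alpha^*\in\osucc_r(b\uhr n)$. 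Since every node of $s'$ is $\subseteq$-comparable with $\stem(s')\supseteq(b\uhr n)^{\frown}\alpha^*$, hence with $(b\uhr n)^{\frown}\alpha^*$, we get $s'\subseteq r\uhr((b\uhr n)^{\frown}\alpha^*)$, and therefore $r\uhr((b\uhr n)^{\frown}\alpha^*)\in H$.

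Next, as noted in the definition of $b$ (and coming from the construction of the fusion sequence $(p_n)_{n\in\omega}$ via Player II's winning strategy in $\mathcal{G}_k$), the statement $\phi(i,r\uhr(b\uhr n),\dot{d})$ holds for the ordinal $i<k$ attached to the node $b\uhr n$: there is a pairwise disjoint family $(A_\alpha)_{\alpha\in\osucc_r(b\uhr n)}$ of sets of size $<\kappa^*$ with $(r\uhr((b\uhr n)^{\frown}\alpha),\dot{d})\Vdash\dot{F}\uhr\check{i}\in\check{A}_\alpha$ for each $\alpha$, and $b(n)$ is the unique element of this index set with $g\uhr i\in A_\alpha$. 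Since $(r,\dot{e})\leq(q,\dot{d})$, the condition $(r\uhr((b\uhr n)^{\frown}\alpha^*),\dot{e})$ extends $(r\uhr((b\uhr n)^{\frown}\alpha^*),\dot{d})$, hence forces $\dot{F}\uhr\check{i}\in\check{A}_{\alpha^*}$; moreover it belongs to $H*G$, because its $\dL$-coordinate lies in $H$ (by the previous paragraph) and $\dot{e}^H\in G$. As $(r,\dot{e})\in H*G$ forces $\dot{F}\uhr\check{k}=\check{g}$, we conclude $g\uhr i\in A_{\alpha^*}$, whence $\alpha^*=b(n)$ by uniqueness. This closes the induction and proves the claim.

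The only delicate point is the bookkeeping in the middle paragraph — verifying that the one-step stem-extension $r\uhr((b\uhr n)^{\frown}b_H(n))$ really lies in $H$ (this is what makes the forced membership $\dot{F}\uhr\check{i}\in\check{A}_{b_H(n)}$ usable inside $V[H*G]$), and keeping straight that the disjoint decision family produced by $\phi$ at $b\uhr n$ is exactly the one fixed in the recursion defining $b(n)$; the genuinely hard work, that such families can be arranged uniformly along a single fusion sequence, having already been done via the game $\mathcal{G}_k$. With the claim established the theorem is immediate: $b=b_H$ would be a branch lying in $V$, contradicting the density, for every $x\in V$, of $\{s\in\dL:\stem(s)\not\subseteq x\}$ — which is dense because each $\osucc$-set is $I$-positive, hence nonprincipal, so some successor of a given stem differs from the corresponding value of $x$ — so the generic branch coded by $H$ is not in $V$; this refutes the existence of a condition $(r,\dot{e})\leq(q,\dot{d})$ forcing $\dot{F}\uhr\check{k}=\check{g}\in V$, and hence the contradictory hypothesis of the theorem.
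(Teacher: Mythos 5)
Your proof is correct and follows essentially the same route as the paper's: induction along the generic stem, using that the one-step extension $r\uhr((b\uhr n)^{\frown}\alpha^*)$ lies in $H$, that $\phi$ supplies a pairwise disjoint family $(A_\alpha)$ deciding $\dot{F}\uhr\check{i}$, and that $(r,\dot{e})\in H*G$ pins $\dot F\uhr i$ to $g\uhr i\in A_{b(n)}$, forcing $\alpha^*=b(n)$. The only cosmetic difference is that you first assemble the generic branch $b_H$ and induct on $n$, whereas the paper inducts directly on $|\stem(s)|$ for $s\in H$; the substance is identical.
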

		
		\begin{proof}
			We do the proof by induction on $|\stem(s)|$. Let $s\in H$ and assume the statement holds for all $t\in H$ with $|\stem(t)|<|\stem(s)|$. We may assume that $|\stem(s)|>|\stem(r)|$ since otherwise we know $\stem(s)\subseteq\stem(r)=b\uhr|\stem(r)|$ by construction.
			
			Find $t\in H$ such that $\stem(s)=\stem(t)^{\frown}\alpha$. We want to show $b(|\stem(t)|)=\alpha$. Since $r\in H$ and $H$ is a filter, $r$ and $t$ are compatible and in particular there is $t'\leq s,r$ with $\stem(t')=\stem(t)$ (just intersect $r$ and $s$), so we may as well assume $t\leq r$. By the same argument, assume $s\leq t$. Then $\phi(i_{|\stem(t)|},t,\dot{d})$ holds and since $\stem(s)=\stem(t)^{\frown}\alpha$, $(s,\dot{d})\Vdash \dot{F}\uhr\check{i}_{|\stem(t)|}\in\check{A}_{\alpha}$. Since $A_{\alpha}\cap A_{\alpha'}=\emptyset$ for $\alpha'\neq\alpha$ and $(r,\dot{e})$ forces $\dot{F}\uhr\check{i}_{|\stem(s)|}=\check{g}\uhr\check{i}_{|\stem(s)|}\in\check{A}_{b(n)}$, we are done.
		\end{proof}
		
		This is a clear contradiction to the fact that whenever $H$ is $\dL$-generic, the generic function $\bigcup\{\stem(p)\;|\;p\in H\}$ is outside of the ground model, since the latter set is equal to $b$.	Ergo $(q,\dot{d})$ forces $\dot{F}\uhr\check{k}\notin V$, which contradicts our assumptions.
	\end{proof}

	We now present an interesting contrasting property of $\dL((\kappa_n,I_n)_{n\in\omega})$:
	
	\begin{mylem}\label{NoNewFunctions}
		$\dL((\kappa_n,I_n)_{n\in\omega})$ does not add any new functions from $\gamma$ to $\kappa_k$ whenever $\gamma<\mu$ and $k\in\omega$.
	\end{mylem}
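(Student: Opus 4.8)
The plan is to prove the stronger density statement that whenever $p$ forces $\dot{f}$ to be a function from $\check{\gamma}$ into $\check{\kappa}_k$ (with $\gamma<\mu$ and $k\in\omega$), there is $q\leq p$ and $g\in V$ with $q\Vdash\dot{f}=\check{g}$; the lemma then follows by genericity. The two ingredients are the Prikry property of $\dL:=\dL((\kappa_n,I_n)_{n\in\omega})$ for names of ordinals below $\kappa_{|\stem(p)|}$ (Fact~\ref{PrikryProperty}) and the ${<}\,\mu$-closure of the direct extension ordering $\leq_0$ on the dense suborder $\dL'((\kappa_n,I_n)_{n\in\omega})$ (Lemma~\ref{SuborderClosed}); as usual we work inside that dense suborder throughout.

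First I would enlarge the stem of $p$: since $\osucc_p(\stem(p))$ is always positive (hence nonempty), passing finitely often to conditions of the form $p\uhr(\stem(p)^{\frown}\alpha)$ yields $p'\leq p$ with $|\stem(p')|>k$, so that $\kappa_k<\kappa_{|\stem(p')|}$. Crucially, this inequality persists under any $\leq_0$-extension of $p'$, since $\leq_0$ preserves the stem.

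Next I would build a $\leq_0$-decreasing sequence $(p_{\xi})_{\xi\leq\gamma}$ with $p_0=p'$. At a successor step, $|\stem(p_{\xi})|=|\stem(p')|>k$, so $p_{\xi}$ forces $\dot{f}(\check{\xi})<\check{\kappa}_k<\check{\kappa}_{|\stem(p_{\xi})|}$; Fact~\ref{PrikryProperty} then supplies $p_{\xi+1}\leq_0 p_{\xi}$ and an ordinal $\alpha_{\xi}<\kappa_k$ with $p_{\xi+1}\Vdash\dot{f}(\check{\xi})=\check{\alpha}_{\xi}$. At a limit step $\eta\leq\gamma$ I would invoke the ${<}\,\mu$-closure of $\leq_0$ (legitimate since $\eta\leq\gamma<\mu$) to take a $\leq_0$-lower bound $p_{\eta}$ of $(p_{\zeta})_{\zeta<\eta}$. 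Setting $q:=p_{\gamma}$, the condition $q$ decides $\dot{f}(\check{\xi})=\check{\alpha}_{\xi}$ for every $\xi<\gamma$, hence $q\Vdash\dot{f}=\check{g}$ for the ground-model function $g\colon\gamma\to\kappa_k$, $g(\xi):=\alpha_{\xi}$.

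There is no serious obstacle here: the argument is the standard ``Prikry property plus closure of the direct extension ordering'' decision scheme. The only point worth flagging is that one must push the stem past $k$ before the Prikry property becomes applicable to names of ordinals below $\kappa_k$; after that, the $\gamma<\mu$ many applications combined with the ${<}\,\mu$-closure of $\leq_0$ finish the job.
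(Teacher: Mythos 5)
Your argument is correct and is essentially the paper's own proof: extend the stem past $k$, then use Fact~\ref{PrikryProperty} at successor steps and the ${<}\,\mu$-closure of $\leq_0$ on the dense suborder at limit steps to decide $\dot{f}$ pointwise along a $\leq_0$-decreasing sequence of length $\gamma$. The only difference is that you spell out the stem-lengthening and the limit stages, which the paper leaves implicit.
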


	\begin{proof}
		Let $\dot{f}$ be a $\dL((\kappa_n,I_n)_{n\in\omega})$-name for a function from $\check{\gamma}$ to $\check{\kappa}_k$, where $k\in\omega$, forced by some $p\in\dL((\kappa_n,I_n)_{n\in\omega})$. Assume that $|\stem(p)|\geq k+1$. By induction on $\delta<\gamma$, let $p_{\delta}$ be a lower bound (with respect to $\leq_0$) of $(p_{\alpha})_{\alpha<\delta}$ (with $p_0\leq_0p$) which decides $\dot{f}(\check{\delta})$ (this is possible by Lemma \ref{PrikryProperty} and Lemma \ref{SuborderClosed}). Now let $p^*$ be a lower bound of $(p_{\delta})_{\delta<\gamma}$. It follows that $p^*$ decides $\dot{f}$.
	\end{proof}

	The previous two lemmas seem to be in contention: On one hand, every new function on $\mu$ has an initial segment which is in the ground model. On the other hand we are not adding new functions from ordinals below $\mu$ into any $\kappa_n$. However, this tension is precisely what allows us to show that the iteration $\dL((\kappa_n,I_n)_{n\in\omega})*\dot{\Coll}(\check{\mu},\check{\kappa}^*)$ collapses $\kappa^*$ to $\mu$ without making it approachable.
	
	The following is the crux of this paper:
	
	\begin{mysen}\label{NoNewApproach}
		Let $d\colon[\kappa^*]^2\to\omega$ be a $(\kappa_n)_{n\in\omega}$-normal coloring. The forcing $\dL((\kappa_n,I_n)_{n\in\omega})*\dot{\Coll}(\check{\mu},\check{\kappa}^*)$ does not add a cofinal subset of $\kappa^*$ on which $d$ is bounded.
	\end{mysen}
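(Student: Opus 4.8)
The plan is to argue by contradiction, playing the weak approximation property (Theorem \ref{ApproxProp}) off against the absence of new short functions into the $\kappa_n$ (Lemma \ref{NoNewFunctions}), with $(\kappa_n)_{n\in\omega}$-normality serving as the bridge between the two. Suppose some condition forces that $\dot E$ is a cofinal subset of $\check\kappa^*$ on which $d$ is bounded. Since $\omega$ is countable I can pass to a stronger condition $(p,\dot c)$ and fix $n^*\in\omega$ with $(p,\dot c)\Vdash d\uhr[\dot E]^2\leq\check n^*$. I then work below $(p,\dot c)$ and fix a generic $G*H$.

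In $V[G*H]$ the cardinal $\kappa^*$ has cofinality exactly $\mu$: it is at least $\mu$ by Lemma \ref{KappaStarNotToMu} (since $\dot\Coll(\check\mu,\check\kappa^*)$ is $<\mu$-closed), and at most $\mu$ because $\Coll(\mu,\kappa^*)$ adds a surjection of $\mu$ onto $\kappa^*$. Hence $\otp(E)$ has cofinality $\mu$, so I can fix a cofinal subset of $E$ of order type $\mu$ and let $F\colon\mu\to\kappa^*$ be its increasing enumeration; then $F$ is a cofinal function from $\mu$ into $\kappa^*$ with $\mathrm{ran}(F)\subseteq E$. Applying Theorem \ref{ApproxProp} to a name for $F$ — using that $\dot\Coll(\check\mu,\check\kappa^*)$ is $<\mu$-closed, together with the hypothesis that $\sup_n\kappa_n$ is a strong limit with $2^{\sup_n\kappa_n}=\kappa^*$ — there is $i<\mu$ with $F\uhr i\notin V$.

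Set $\beta:=F(i)\in E$. For every $j<i$ we have $F(j)<\beta$ and $\{F(j),\beta\}\in[E]^2$, so $d(F(j),\beta)\leq n^*$; hence $\mathrm{ran}(F\uhr i)\subseteq A:=\{\gamma<\beta\mid d(\gamma,\beta)\leq n^*\}$. Since $d$ and the ordinal $\beta$ both lie in $V$, so does $A$, and $(\kappa_n)_{n\in\omega}$-normality gives $|A|^V\leq\kappa_{n^*}$. Fix in $V$ a bijection $h\colon A\to|A|$; then in $V[G*H]$ the composite $g:=h\circ(F\uhr i)$ is a function from $i<\mu$ into $\kappa_{n^*}$. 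Now I would note that $\dL((\kappa_n,I_n)_{n\in\omega})*\dot\Coll(\check\mu,\check\kappa^*)$ adds no new function from an ordinal $<\mu$ into $\kappa_{n^*}$: any such function is added by the $<\mu$-closed factor $\Coll(\mu,\kappa^*)$ already in the $\dL$-extension, where it belongs to $V$ by Lemma \ref{NoNewFunctions} (and $\mu$ is preserved by $\dL$, since by Lemma \ref{NoNewFunctions} with $k=0$, $\dL$ adds no new bounded subset of $\mu$). Therefore $g\in V$, so $F\uhr i=h^{-1}\circ g\in V$, contradicting the choice of $i$.

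The heart of the argument — and the only delicate point — is the bookkeeping that keeps $\beta=F(i)$, hence the ``fiber'' $A$ below it and a ground-model coding $h$ of $A$, inside $V$: this is exactly what lets the new function $F\uhr i$ furnished by Theorem \ref{ApproxProp} be recoded as a new function from an ordinal $<\mu$ into $\kappa_{n^*}$, which Lemma \ref{NoNewFunctions} forbids. One must also ensure that the color bound $n^*$ and the normality index agree, which is precisely what $(\kappa_n)_{n\in\omega}$-normality (rather than mere normality) provides.
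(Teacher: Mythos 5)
Your proposal is correct and follows essentially the same route as the paper's own proof: apply Theorem \ref{ApproxProp} to get an $i<\mu$ with $F\uhr i\notin V$, note that $(\kappa_n)_{n\in\omega}$-normality traps $\mathrm{ran}(F\uhr i)$ in a ground-model set of size $\leq\kappa_{n^*}$, and recode to contradict Lemma \ref{NoNewFunctions}. Your write-up merely makes explicit the ground-model coding and the preservation of $\mu$ that the paper leaves implicit.
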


	\begin{proof}
		It follows that after forcing with $\dL*\dot{\Coll}(\check{\mu},\check{\kappa}^*)$, $\cf(\kappa^*)=\mu$. Let $G*H$ be $\dL*\dot{\Coll}(\check{\mu},\check{\kappa^*})$-generic. Assume that in $V[G*H]$ there is an increasing and cofinal function $F\colon\mu\to\kappa^*$ and $n\in\omega$ such that $d(\alpha,\beta)\leq n$ for every $\alpha,\beta\in\im(F)$.
		
		By Lemma \ref{ApproxProp} there is $i<\mu$ such that $F\uhr i\notin V$. We will show that this leads to a contradiction. Let $\alpha:=F(i)$. Then $F\uhr i\colon i\to\{\beta<\alpha\;|\;d(\beta,\alpha)\leq n\}$ since $F$ is increasing and $d$ is bounded on the image of $F$. But $\{\beta<\alpha\;|\;d(\beta,\alpha)\leq n\}$ is in $V$ and has size $\leq\kappa_n$ there (since $d$ is $(\kappa_n)_{n\in\omega}$-normal). So modulo coding in the ground model, $F\uhr i$ is a function from $i$ to $\kappa_n$ which is in $V[G*H]\smallsetminus V$. This contradicts Lemma \ref{NoNewFunctions}, since clearly $\dot{\Coll}(\check{\mu},\check{\kappa}^*)$ is forced not to add any new functions from $i<\mu$ into the ordinals by its closure.
	\end{proof}
	
	The following lemma will be used to show that even in the case $\mu=\aleph_1$, our Namba forcing makes $\kappa^*$ into a good point. Since our Namba forcing is a ``Laver-style'' version (i.e. it splits everywhere above the stem) and we are using normal ideals, its generic is an exact upper bound of any $(\kappa^*,(\kappa_n)_{n\in\omega})$-scale:
	
	\begin{mylem}\label{ExactUpperBound}
		Let $(f_{\alpha})_{\alpha<\kappa^*}$ be a $(\kappa^*,(\kappa_n)_{n\in\omega})$-scale. Let $G$ be $\dL((\kappa_n,I_n)_{n\in\omega})$-generic. In $V[G]$, the set $b:=\bigcup\{\stem(p)\;|\;p\in G\}$ is an exact upper bound of $(f_{\alpha})_{\alpha<\kappa^*}$.
	\end{mylem}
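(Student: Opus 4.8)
The plan is to establish the two clauses defining ``exact upper bound'' separately --- that $b$ is an upper bound (for every $\alpha<\kappa^*$, $f_\alpha<^*b$) and that it is exact (every $g\in(\prod_n\kappa_n)^{V[G]}$ with $g<^*b$ satisfies $g<^*f_\alpha$ for some $\alpha<\kappa^*$) --- by density arguments in $\dL:=\dL((\kappa_n,I_n)_{n\in\omega})$, noting first that $b$ is a genuine element of $\prod_n\kappa_n$ since $\{p\mid|\stem(p)|\geq m\}$ is dense for every $m$.

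For the upper bound, fix $\alpha<\kappa^*$ and $p\in\dL$. Since $f_\alpha(|t|)<\kappa_{|t|}$, the set $\osucc_p(t)\setminus(f_\alpha(|t|)+1)$ differs from $\osucc_p(t)$ by a bounded --- hence $I_{|t|}$-null --- set, so it is still $I_{|t|}$-positive. A straightforward fusion, thinning the successor sets at every node above $\stem(p)$ in this way, yields $q\leq_0 p$ with $\osucc_q(t)\cap(f_\alpha(|t|)+1)=\emptyset$ for all $t\in q$ above the stem; then $q$ forces $b(n)>f_\alpha(n)$ for every $n\geq|\stem(p)|$, i.e. $q\Vdash f_\alpha<^*b$. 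Density gives $f_\alpha<^*b$ in $V[G]$.

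The substantial direction is exactness. Let $\dot g$ be a $\dL$-name with $p\Vdash\dot g\in\prod_n\kappa_n\wedge\dot g<^*b$; by replacing $p$ with a stronger condition (and lengthening its stem) I may assume $p\Vdash\dot g(n)<b(n)$ for all $n\geq k:=|\stem(p)|$. First I would do a \emph{complete reading} by a fusion using the Prikry property: for $t$ with $|t|=n+1\geq k+1$ one has $\dot g(n)<\kappa_n<\kappa_{n+1}=\kappa_{|t|}$, so by Fact \ref{PrikryProperty} a direct extension of $p\uhr t$ decides $\dot g(n)$; fusing over all levels produces $q\leq_0 p$ such that for every $t\in q$ with $|t|=n+1\geq k+1$ there is $\gamma_t<t(n)$ with $q\uhr t\Vdash\dot g(n)=\check\gamma_t$. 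Next comes the \emph{pressing down}: for a fixed $s\in q$ with $|s|=n\geq k$ the assignment $\beta\mapsto\gamma_{s^\frown\beta}$ is regressive on the $I_n$-positive set $\osucc_q(s)$, so by normality of $I_n$ it is constant, with value $\delta_s<\kappa_n$ say, on an $I_n$-positive subset, which we may further shrink into $B_n$; pruning $q$ accordingly gives $q'\leq q$ in $\dL'$ with $q'\uhr s\Vdash\dot g(n)=\check\delta_s$ whenever $s\in q'$ and $|s|=n\geq k$. Since $q'$ has fewer than $\kappa_n$ nodes of length $n$ (at most $\prod_{j<n}\kappa_j$ many), the function $e$ with $e(n):=\sup\{\delta_s+1\mid s\in q',\,|s|=n\}$ (defined arbitrarily for $n<k$) is an element of $(\prod_n\kappa_n)^V$. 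By clause (3) of the definition of a scale, applied in $V$, there is $\alpha<\kappa^*$ with $e<^*f_\alpha$; then for all sufficiently large $n$ and every node $s\in q'$ of length $n$ we have $\delta_s<e(n)<f_\alpha(n)$, so $q'\Vdash\dot g(n)<f_\alpha(n)$ for all large $n$, i.e. $q'\Vdash\dot g<^*f_\alpha$. A density argument then yields exactness in $V[G]$.

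The hard part is the pressing-down step, and this is where normality of the $I_n$ is indispensable: without it one could only say that $e(n)$ is a supremum of up to $\kappa_n$ many ordinals below $\kappa_n$, which may equal $\kappa_n$ and so fall out of $\prod_n\kappa_n$. Collapsing, via Fodor's lemma for the normal ideal $I_n$, the decision of $\dot g(n)$ to a single value $\delta_s$ at each node $s$ of length $n$ is exactly what bounds the number of relevant values at level $n$ by $\kappa_{n-1}<\kappa_n$ and keeps $e$ inside $\prod_n\kappa_n$; this is the point at which the ``Laver-style splitting into normal ideals'' enters essentially. By contrast, the bookkeeping in the fusion of the complete-reading step, and the verification that the pruning preserves the already-decided values while keeping $q'$ in $\dL'$, are routine.
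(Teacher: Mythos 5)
Your proposal is correct and follows essentially the same route as the paper's proof: thinning successor sets for the upper-bound direction, and for exactness a fusion using the Prikry property to decide $\dot g(n)$ one level below $n$, Fodor's lemma for the normal ideals to collapse to one value per length-$n$ node, and the count of at most $\kappa_{n-1}$ such nodes to produce a bounding function in $(\prod_n\kappa_n)^V$ that the scale then dominates. The only cosmetic difference is that you perform the complete reading and the pressing-down as two separate passes, whereas the paper interleaves them within a single fusion.
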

	
	\begin{proof}
		We repeat a proof by Cummings and Magidor (see \cite[Claim 4]{CumMagMMWeakSquare}). It is clear by genericity that $b$ is an upper bound of $(f_{\alpha})_{\alpha<\kappa^*}$: Given any $p\in G$ and $\alpha<\kappa^*$, there is a direct extension $p_0\leq_0p$ in $G$ such that whenever $s\in p_0$ and $n\geq|\stem(p_0)|$, $s(n)> f_{\alpha}(n)$ (if it is defined) by simply taking away the values below $f_{\alpha}$ from the possible ordinal successors.
		
		The more substantial claim is showing that $b$ is exact. To this end, let $\dot{g}$ be an $\dL((\kappa_n,I_n)_{n\in\omega})$-name for an element of $\prod_{n\in\omega}\check{\kappa}_n$ which is forced by some condition $p$ to be ${<^*}\,\dot{b}$. Assume without loss of generality that $\dot{g}$ is forced to be strictly below $\dot{b}$. Let $n:=|\stem(p)|$. If $q\leq p$, $l:=|\stem(q)|$, then for any $\alpha\in\osucc_{\stem(q)}(q)$ there is some $q_{\alpha}\leq_0 q\uhr(\stem(q)^{\frown}\alpha)$ which forces $\dot{g}(\check{l})=\check{\beta}_{\alpha}$ for some $\beta_{\alpha}<\alpha$, since $q\uhr(\stem(q)^{\frown}\alpha)\Vdash\dot{g}(\check{l})<\dot{b}(\check{l})=\check{\alpha}$ and $\alpha<\kappa_l<\kappa_{l+1}$. By the normality of $I_l$, there is $W\subseteq\osucc_{\stem(q)}(q)$ with $W\in I_l^+$ such that $\beta_{\alpha}=\beta$ for some $\beta$ and all $\alpha\in W$. If $q':=\bigcup_{\alpha\in W}q_{\alpha}$, then $q'\leq_0q$ and $q'\Vdash\dot{g}(\check{l})=\check{\beta}$.
		
		We repeat this argument and build a fusion sequence $(p_i)_{i\in\omega}$ such that for all $s\in p_i$ with $n\leq|s|<n+i$ there is an ordinal $\alpha_s$ such that $p_i\uhr s\Vdash\dot{g}(\check{|s|})=\check{\alpha}_s$. Let $p_{\omega}$ be the fusion limit of all $p_n$. Then whenever $s\in p_{\omega}$ with $n\leq|s|<n+i$, there is an ordinal $\alpha_s$ such that $p_i\uhr s\Vdash\dot{g}(\check{|s|})=\check{\alpha}$. Moreover, for each $l$ there are at most $\kappa_{l-1}$ many $s\in p_{\omega}$ with $|s|=l$ and each $\alpha_s$ is in $\kappa_l$, so $\alpha_l:=\sup_{s\in p,\;|s|=l}\alpha_s<\kappa_l$.
		
		Let $h$ be defined by $h(l):=\alpha_l$ for $l\geq n$, $0$ otherwise. Then it follows that $h\in\prod_n\kappa_n$ and $p_{\omega}\Vdash\dot{g}<^*\check{h}$. Moreover, since $(f_{\alpha})_{\alpha<\kappa^*}$ is a scale, $h<^*f_{\alpha}$ for some $\alpha<\kappa^*$. In particular $p_{\omega}\Vdash\dot{g}<^*\check{f}_{\alpha}$, so we are done.
	\end{proof}
	
	\section{The Main Theorem}
	
	In this section we will prove the main theorem:
	
	\begin{mysen}\label{MainThm}
		Assume $\GCH$ holds and $(\kappa_k)_{k\in\omega}$ is an increasing sequence of supercompact cardinals. Let $n\in\omega$, $n\geq 1$. There is a forcing extension in which $\GCH$ holds, $\kappa_0=\aleph_{n+1}$, $(\sup_k\kappa_k)^+=\aleph_{\omega+1}$ and there are stationarily many $\gamma\in\aleph_{\omega+1}\cap\cof(\aleph_n)$ which are good but not approachable.
	\end{mysen}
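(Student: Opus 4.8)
The plan is to build, over a suitably prepared ground model, an Easton-support Magidor iteration (in the sense of Section~2) of the basic Namba-collapse poset $\dL((\kappa_k,I_k)_{k\in\omega})*\dot{\Coll}(\check\mu,\cdot)$ of Sections~4--5, and to extract ``good'' from Lemma~\ref{ExactUpperBound} and ``not approachable'' from Theorem~\ref{NoNewApproach}. Working in $V$, set $\mu:=\aleph_n$. A Laver-style preparation together with the product of Lemma~\ref{OmegaLIPLemma} --- interleaved with the main iteration so that the supercompactness of each $\kappa_k$ is available at the stage where it is consumed --- yields a model $\bar V$ in which $\GCH$ holds, $\kappa_k=\aleph_{n+k+1}$ for all $k$, $\sup_k\kappa_k=\aleph_\omega$, $\kappa^*:=(\sup_k\kappa_k)^+=\aleph_{\omega+1}$, and $\LIP(\mu,\kappa_k)$ holds, witnessed by normal ideals $I_k$ concentrating on the appropriate cofinality; thus the Assumption of Section~4 holds. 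Fix in $\bar V$ a $(\kappa_k)_{k\in\omega}$-normal subadditive coloring $d\colon[\kappa^*]^2\to\omega$ and a $(\kappa^*,(\kappa_k)_{k\in\omega})$-scale. The iteration itself is then arranged according to the ``reflection, then collapse'' blueprint recalled in the introduction: stationarily often, one instance of the basic forcing turns a high ``copy of $\aleph_{\omega+1}$'' into a point of cofinality $\mu$ that is not $d$-approachable, and afterwards one collapses so that this high level becomes the final $\aleph_{\omega+1}$ and the produced points land in $\aleph_{\omega+1}\cap\cof(\aleph_n)$.

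For the cardinal bookkeeping: by Lemma~\ref{PrikryIter} the iteration has the Prikry property and by Lemma~\ref{DirectClosureLimit} its direct extension ordering is ${<}\,\mu$-closed, so Lemma~\ref{PrikryPumpUp} lets me decide names for subsets of $\mu$ and for short functions by direct extensions; combined with Lemma~\ref{NoNewFunctions} at each stage, this shows no new function from an ordinal $<\mu$ into any cardinal $<\aleph_\omega$ is added, hence all cardinals $\leq\aleph_n$ are preserved. Cardinals in $(\aleph_n,\aleph_{\omega+1}]$ are preserved by a $\nu$-cc argument at the Mahlo cardinals $\nu$ that occur as iteration points --- the reason for Easton rather than full support --- and $\GCH$ is maintained by the usual count of nice names. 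Lemma~\ref{KappaStarNotToMu}, applied cofinally, keeps $\cf(\aleph_{\omega+1})\geq\mu$; in fact $\aleph_{\omega+1}$ stays regular, so the arithmetic in the statement holds in $V[\mathbb{P}]$.

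Goodness is the easy half: for $n\geq 2$ every $\gamma<\aleph_{\omega+1}$ with $\cf(\gamma)=\aleph_n>\aleph_1=2^{\aleph_0}$ is good automatically (see \cite[Corollary~4.61]{EisworthHandbook}); for $n=1$, Lemma~\ref{ExactUpperBound} shows that at the stage producing a target point $\gamma$ the Namba generic is an exact upper bound of the then-current scale, so $\gamma$ is good there, and the tail --- on a dense set ${<}\,\mu$-closed and not changing $\cf(\gamma)$ --- preserves this by a routine argument (no later iterand adds a witness to failure of exactness, cf.\ the proof of Lemma~\ref{ExactUpperBound}). For failure of approachability, Fact~\ref{ColoringApproach} reduces the goal to making stationarily many $\gamma\in\aleph_{\omega+1}\cap\cof(\aleph_n)$ not $d$-approachable. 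Theorem~\ref{NoNewApproach} delivers this for one instance of the basic forcing; two things remain. First, that a single bad point produced high up survives as such: one must propagate Theorem~\ref{NoNewApproach} along the tail of the iteration, i.e.\ show that the weak ${<}\,\mu$-approximation property of Theorem~\ref{ApproxProp} composes along Easton-support Magidor iterations of Namba-style and ${<}\,\mu$-closed forcings, running the fusion/game argument against the whole iteration. Second, that there are stationarily many such points: this is obtained by the reflection mechanism sketched for Shelah's construction in the introduction --- lifting a supercompact embedding $j$ of the relevant $\kappa$ through $\mathbb{P}$, using that $j(\mathbb{P})$ factors as $\mathbb{P}$ followed by a sufficiently closed tail containing a stage that runs the basic forcing (the quotient being generic by a master-condition argument), and pulling non-$d$-approachability of $j$'s image point back to stationarily many $\gamma<\aleph_{\omega+1}$. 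I expect the first of these, the iteration-preservation of the weak approximation property, to be the principal obstacle, precisely because --- as noted in Section~4 --- the direct extension ordering is only ${<}\,\mu$-closed rather than ${<}\,\mu^+$-closed, one cardinal short of what would make the single-step argument go through unchanged.
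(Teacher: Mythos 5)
Your overall architecture matches the paper's: an Easton-support Magidor iteration up to $\kappa_0$ whose Mahlo-stage iterands are (a Laver-anticipated preparation followed by) $\dL((\alpha^{+k},I_k)_{k})*\Coll(\aleph_n,\alpha^{+\omega+1})$, followed by $\prod_k\Coll(\kappa_k,<\kappa_{k+1})$; the Prikry property and the ${<}\,\aleph_n$-closed direct extension ordering for cardinal preservation; a lifted $\kappa^*$-supercompact embedding whose image iteration runs the basic Namba-collapse at stage $\kappa$ on $\kappa^{+\omega+1}$ itself; Theorem~\ref{NoNewApproach} for the failure of $d$-approachability at $\rho=\sup j[\kappa^*]$; and Lemma~\ref{ExactUpperBound} for goodness when $n=1$. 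Up to that point your sketch is essentially the paper's proof.

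The genuine gap is the step you yourself flag as the ``principal obstacle'': you propose to preserve non-$d$-approachability of $\rho$ through the tail $j(\dP)_{\kappa+1,j(\kappa)}$ by composing the weak ${<}\,\mu$-approximation property of Theorem~\ref{ApproxProp} along the whole iteration. This is both unproved in your write-up and the wrong tool: the tail contains further Namba iterands (at Mahlo stages in $(\kappa,j(\kappa))$) which are not ${<}\,\mu$-closed, so the tail is not of the form $\dL*(\text{${<}\,\mu$-closed})$ to which Theorem~\ref{ApproxProp} applies, and running the fusion/game argument against the entire iteration would require substantial new machinery. The point you are missing is that the approximation property is needed exactly once --- for the single two-step forcing $\dL((\kappa^{+k},I_k)_k)*\Coll(\aleph_n,\kappa^{+\omega+1})$ that changes $\cf(\kappa^*)$ to $\aleph_n$ and makes it non-approachable. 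After that stage, $\rho$ already has cofinality $\aleph_n$ (not a strong limit), and preservation of its non-$j(d)$-approachability through the tail follows from the much weaker Lemma~\ref{ClosurePreserve}: in a Prikry-type forcing whose direct extension ordering is ${<}\,\aleph_n$-closed, any new unbounded subset of $\rho$ of order type $\aleph_n$ on which $j(d)$ is bounded can be read off, coordinate by coordinate via direct extensions, from a ground-model unbounded set on which $j(d)$ is already bounded --- contradiction. The final product of collapses is ${<}\,\aleph_{n+1}$-closed and is handled the same way. Replacing your proposed ``iterated approximation property'' by this closure-plus-Prikry argument closes the gap and is exactly how the paper proceeds.
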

	
	\begin{mybem}
		Note that in the case $n>1$, the goodness of $\gamma$ is trivial by $\GCH$.
	\end{mybem}
	
	Before we can go through with the proof of Theorem \ref{MainThm}, we will need a result regarding the preservation of non-approachability under sufficiently closed forcing. This result is well-known but we will provide a proof because in our case, only the direct extension ordering is sufficiently closed.
	
	\begin{mylem}\label{ClosurePreserve}
		Assume $\gamma\leq\delta$ are ordinals such that $\cf(\gamma)$ is not a strong limit and $d\colon[\delta]^2\to\omega$ is a subadditive function such that $\gamma$ is not $d$-approachable. Let $(\dP,\leq,\leq_0)$ be a Prikry-type forcing and assume that $(\dP,\leq_0)$ is ${<}\,\cf(\gamma)$-closed. Then $\gamma$ is not $d$-approachable after forcing with $\dP$.
	\end{mylem}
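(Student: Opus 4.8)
The plan is to argue by contradiction and to reduce the problem to the case where $\gamma$ itself is a regular cardinal; in that case a recursion of length $\cf(\gamma)$ using the closure of $\leq_0$ together with Lemma~\ref{PrikryPumpUp}(3) will finish the job.

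First, if $\cf(\gamma)$ becomes $\omega$ in the extension then $\gamma$ is not $d$-approachable there by definition, so I may assume $\cf^{V[G]}(\gamma)>\omega$; since a ground-model cofinal map stays cofinal, $\mu:=\cf^V(\gamma)\geq\cf^{V[G]}(\gamma)>\omega$. Suppose toward a contradiction that some condition forces $\gamma$ to be $d$-approachable. By Fact~\ref{ApproachRefinement} applied in $V$, fix an unbounded $B_0\subseteq\gamma$ in $V$ such that no unbounded subset of $B_0$ lying in $V$ has $d$ bounded; passing to a cofinal subset I may assume $\otp(B_0)=\mu$, say $B_0=\{b_j\mid j<\mu\}$ is increasing. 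In $V[G]$ the set $B_0$ is still unbounded in $\gamma$, so by Fact~\ref{ApproachRefinement} applied in $V[G]$ there is an unbounded $A\subseteq B_0$ and some $n\in\omega$ with $d\uhr[A]^2\leq n$. Fixing a name $\dot A$, a condition $p$ and $n\in\omega$ with $p$ forcing $\dot A$ to be an unbounded subset of $B_0$ with $d\uhr[\dot A]^2\leq\check n$, set $\dot I:=\{j<\mu\mid b_j\in\dot A\}$ and define $e\colon[\mu]^2\to\omega$ by $e(j,k):=d(b_j,b_k)$ for $j<k$. Since $\langle b_j\rangle$ is increasing and $d$ is subadditive, $e$ is subadditive; $p$ forces that $\dot I$ is an unbounded subset of $\mu$ with $e\uhr[\dot I]^2\leq\check n$; and $\mu$ is \emph{not} $e$-approachable in $V$, because an unbounded $e$-bounded subset $I^*$ of $\mu$ in $V$ would make $\{b_j\mid j\in I^*\}$ an unbounded $d$-bounded subset of $B_0$ in $V$.

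It thus suffices to obtain a contradiction from: $\mu$ regular and not a strong limit, $(\dP,\leq_0)$ being ${<}\,\mu$-closed, $e\colon[\mu]^2\to\omega$ subadditive with $\mu$ not $e$-approachable in $V$, and $p\Vdash$``$\dot I\subseteq\mu$ is unbounded with $e\uhr[\dot I]^2\leq\check n$''. Working in $V$, I would build by recursion on $i<\mu$ a strictly increasing sequence $\langle\alpha_i\mid i<\mu\rangle$ of ordinals below $\mu$ and a $\leq_0$-descending sequence $\langle q_i\mid i<\mu\rangle$ below $p$ with $q_i\Vdash\{\check\alpha_j\mid j\leq i\}\subseteq\dot I$. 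At a limit $\lambda<\mu$ I take a $\leq_0$-lower bound of $\langle q_j\mid j<\lambda\rangle$ (possible as $\lambda<\mu$ and $\leq_0$ is ${<}\,\mu$-closed), which still forces $\{\check\alpha_j\mid j<\lambda\}\subseteq\dot I$. The crucial step, used at both successors and limits, is this: given $q\leq_0 p$ forcing $\{\check\alpha_j\mid j<i\}\subseteq\dot I$, I use that $q$ forces $\dot I$ to be unbounded in $\mu$ to fix a name $\dot b$ with $q\Vdash$``$\dot b\in\dot I\wedge\dot b>\sup_{j<i}\alpha_j$''; since $\dot b$ is forced to be below $\mu$, and $\mu$ is regular, not a strong limit, with $\leq_0$ being ${<}\,\mu$-closed, Lemma~\ref{PrikryPumpUp}(3) gives $q_i\leq_0 q$ and an ordinal $\alpha_i$ with $q_i\Vdash\dot b=\check\alpha_i$; then $\alpha_i>\sup_{j<i}\alpha_j$ and $q_i\Vdash\{\check\alpha_j\mid j\leq i\}\subseteq\dot I$. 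The point is that this commitment is achieved by a \emph{direct} extension, so the recursion never needs a lower bound of a $\leq_0$-descending sequence of length $\mu$.

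Finally, for each $i<\mu$ the condition $q_i$ forces $e(\check\alpha_j,\check\alpha_k)\leq\check n$ whenever $j<k\leq i$, and since $\alpha_j,\alpha_k,e,n$ all lie in $V$ this inequality holds in $V$; letting $i$ range over $\mu$ we get $e\uhr[\{\alpha_i\mid i<\mu\}]^2\leq n$ in $V$, while $\{\alpha_i\mid i<\mu\}$ is unbounded in $\mu$ because $\mu$ is regular. Hence $\mu$ is $e$-approachable in $V$, contradicting the choice of $e$. I expect the only real difficulty to be spotting the reduction of the first two paragraphs: once $\gamma$ has been replaced by the regular cardinal $\mu=\cf(\gamma)$, the objects one must decide in the recursion become ordinals below $\mu$, which is precisely the situation in which Lemma~\ref{PrikryPumpUp}(3) applies, and everything else is a routine closure argument.
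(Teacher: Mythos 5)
Your proof is correct and is essentially the paper's argument: both fix a ground-model cofinal set of order type $\cf(\gamma)$, invoke Fact \ref{ApproachRefinement} to force the witnessing set inside it, and then use Lemma \ref{PrikryPumpUp}(3) together with the ${<}\,\cf(\gamma)$-closure of $\leq_0$ to decide the elements of that set one at a time along a $\leq_0$-decreasing sequence, producing a ground-model unbounded set on which $d$ is bounded. Your intermediate transport to the induced coloring $e$ on $\mu=\cf(\gamma)$ is only a cosmetic repackaging of the paper's direct use of the enumeration $F$ of the cofinal set.
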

	
	\begin{proof}
		Fix an unbounded and increasing function $F\colon\cf(\gamma)\to\gamma$. By Fact \ref{ApproachRefinement}, after forcing with $\dP$, $\gamma$ is $d$-approachable if and only if there exists an unbounded subset of $\im(F)$ on which $d$ is bounded. By Lemma \ref{PrikryPumpUp} (3) we can decide ordinals below $\cf(\gamma)$ using $\leq_0$. In particular, $\dP$ does not collapse $\cf(\gamma)$.
		
		So assume that $\dot{G}$ is a $\dP$-name for an unbounded and increasing function from $\cf(\check{\gamma})$ into $\im(\check{F})$ such that $\check{d}$ is bounded by $\check{l}$ on the image of $\dot{G}$, forced by some $p\in\dP$. By induction on $\alpha<\cf(\gamma)$, we define a $\leq_0$-decreasing sequence $(p_{\alpha})_{\alpha<\cf(\gamma)}$ with $p_0\leq_0p$ such that for any $\alpha<\cf(\gamma)$ there is some $\chi_{\alpha}$ such that $p_{\alpha}\Vdash\dot{G}(\check{\alpha})=\check{F}(\check{\chi}_{\alpha})$.
		
		Assume $(p_{\beta})_{\beta<\alpha}$ has been defined for $\alpha<\cf(\gamma)$. Let $p_{\alpha}'$ be a $\leq_0$-lower bound of $(p_{\beta})_{\beta<\alpha}$. Then $p_{\alpha}'\Vdash\exists i<\cf(\gamma)(\dot{G}(\check{\alpha})=\dot{F}(i))$. Let $p_{\alpha}\leq_0p_{\alpha}$ force the statement for $i=\check{\chi}_{\alpha}$ for some $\chi_{\alpha}$.
		
		Now let $B:=\{F(\chi_{\alpha})\;|\;\alpha<\cf(\gamma)\}$. We note that $B$ is in the ground model. Since $\dot{G}$ is forced to be increasing and $F$ is actually increasing, $\chi_{\alpha}\geq\alpha$ for every $\alpha<\cf(\gamma)$. In particular, $B$ is unbounded in $\gamma$. It follows that for any $\alpha<\alpha'<\cf(\gamma)$,
		$$p_{\alpha}'\Vdash\check{d}(\check{F}(\check{\chi}_{\alpha}),\check{F}(\check{\chi}_{\alpha'}))=\check{d}(\dot{G}(\check{\alpha}),\dot{G}(\check{\alpha'}))\leq\check{l}$$
		so $d$ is bounded on $[B]^2$ by $l$, a contradiction, as $B\in V$.
	\end{proof}
	
	Now we can prove the main theorem:
	
	\begin{proof}[Proof of Theorem \ref{MainThm}]
		Let $\kappa:=\kappa_0$ and $\kappa^*:=(\sup_k\kappa_k)^+$. Let $l\colon\kappa\to V_{\kappa}$ be a Laver function (see \cite{LaverIndestruct}). We define an Easton-support Magidor iteration $(\dP_{\alpha},\dot{\dQ}_{\alpha})_{\alpha<\kappa}$ with the following iterands:
		
		\begin{enumerate}
			\item Case 1: $\alpha$ is Mahlo, $|\dP_{\beta}|<\alpha$ for every $\beta<\alpha$ and $\dP_{\alpha}$ forces that $l(\alpha)$ is a ${<}\,\alpha$-strategically closed poset which forces that $\LIP(\check{\alpha},\dot{\alpha}^{+k})$ holds for every $k\geq 1$, witnessed by some $\dot{I}_k$. In this case, we let $\dot{\dQ}_{\alpha}$ be a $\dP_{\alpha}$-name for
			$$l(\alpha)*\dot{\dL}((\dot{\alpha}^{+k},\dot{I}_k)_{k\geq 1})*\dot{\Coll}(\check{\aleph}_n,\dot{\alpha}^{+\omega+1})$$
			
			For simplicity, we will replace $\dot{\dL}((\dot{\alpha}^{+k},\dot{I}_k)_{k\geq 1})$ by its subset which is dense in both orderings and ${<}\,\alpha$-closed with regards to $\leq_0$ (see Definition \ref{DefSuborder} and Lemma \ref{SuborderClosed}).
			\item Case 2: Otherwise, we let $\dot{\dQ}_{\alpha}$ be a $\dP_{\alpha}$-name for $\dot{\Coll}(\check{\aleph}_n,\check{\alpha})$.
		\end{enumerate}
		
		Let $\dP_{\kappa}$ be the direct limit of $(\dP_{\alpha},\dot{\dQ}_{\alpha})_{\alpha<\kappa}$ and let $\dP:=\dP_{\kappa}*\prod_{k\in\omega}\dot{\Coll}(\check{\kappa}_k,<\check{\kappa}_{k-1})$.
		
		\setcounter{myclaim}{0}
		
		\begin{myclaim}
			After forcing with $\dP$, all cardinals $\leq\aleph_n$ and $\geq\kappa^*$ are preserved. Every cardinal in the interval $(\aleph_n,\kappa)$ is collapsed to $\aleph_n$ and every $\kappa_k$ is collapsed to $\kappa^{+k}$. Consequently, $\kappa^*$ becomes $\aleph_{\omega+1}$.
		\end{myclaim}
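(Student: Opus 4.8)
The plan is to prove the claim by analyzing the two factors $\dP_\kappa$ and the tail product $\prod_k\dot{\Coll}(\check{\kappa}_k,<\check{\kappa}_{k-1})$ of $\dP$ separately, using the theory of Prikry-type Magidor iterations from Section~2. Each iterand is regarded as a Prikry-type forcing in the obvious way, with $l(\alpha)$ and the Levy collapses carrying the trivial direct extension ordering $\leq_0=\leq$ and $\dot{\dL}$ carrying the ordering $\leq_0$ of Definition~\ref{DefSuborder}; note that for this count it is immaterial how often Case~1 actually triggers.

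First I would record the structural properties of $\dP_\kappa$. By Lemma~\ref{PrikryIter} it has the Prikry property, and its direct extension ordering $\leq_{\kappa,0}$ is ${<}\,\aleph_n$-strategically closed: $\aleph_n$ lies below the first inaccessible, each Case~2 iterand $\dot{\Coll}(\check{\aleph}_n,\check{\alpha})$ is ${<}\,\aleph_n$-closed, and each Case~1 iterand contributes the ${<}\,\alpha$-strategic closure of $l(\alpha)$, the ${<}\,\alpha$-closure of $\leq_0$ on $\dot{\dL}$ (Lemma~\ref{SuborderClosed}), and the ${<}\,\aleph_n$-closure of the final collapse, so that Lemma~\ref{DirectClosureLimit} (whose proof adapts to allow strategic closure) applies. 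Separately, $\dP_\kappa$ is $\kappa$-cc, by the standard chain-condition computation for Easton-support iterations over a Mahlo cardinal (direct limits are taken at every inaccessible $\nu\leq\kappa$, and below such $\nu$ the initial segments have size ${<}\,\nu$). From the $\kappa$-cc, $\dP_\kappa$ preserves $\kappa$ and every cardinal $\geq\kappa$, in particular all $\kappa_k$ and $\kappa^*$ (the latter also since $|\dP_\kappa|=\kappa<\kappa^*$). From the Prikry property together with the ${<}\,\aleph_n$-(strategic) closure of $\leq_{\kappa,0}$, a routine argument — given a name for a bounded subset of $\aleph_n$, use the Prikry property to pass to $\leq_{\kappa,0}$-extensions deciding it pointwise, taking $\leq_{\kappa,0}$-lower bounds (or following a strategy) at limit stages — shows that $\dP_\kappa$ adds no new bounded subset of $\aleph_n$; combined with Lemma~\ref{PrikryPumpUp}(3), applicable since $\aleph_n$ is not a strong limit ($2^{\aleph_{n-1}}=\aleph_n$ under $\GCH$, as $n\geq 1$), this yields that $\aleph_n$ itself is not collapsed, so all cardinals $\leq\aleph_n$ are preserved.

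Next I would show $\dP_\kappa$ collapses every cardinal $\lambda\in(\aleph_n,\kappa)$ to $\aleph_n$: any successor ordinal $\beta$ with $\lambda<\beta<\kappa$ is not Mahlo, hence falls under Case~2, so at stage $\beta$ one forces with $\dot{\Coll}(\check{\aleph}_n,\check{\beta})$, which surjects $\aleph_n$ onto $\beta\supseteq\lambda$; thus $|\lambda|^{V[\dP_\kappa]}\leq\aleph_n$, and by the preceding paragraph $|\lambda|^{V[\dP_\kappa]}=\aleph_n$. Hence $\kappa=\aleph_{n+1}$ in $V[\dP_\kappa]$. Finally I would handle the tail product: over $V[\dP_\kappa]$ it is (forcing-equivalent to) the usual Levy collapse making each $\kappa_k$ equal to $\kappa^{+k}$; it is ${<}\,\kappa$-closed, hence preserves all cardinals $\leq\kappa=\aleph_{n+1}$ (in particular $\aleph_n$), and it has size $\sup_k\kappa_k<\kappa^*$, hence is $\kappa^*$-cc and preserves $\kappa^*$ and all larger cardinals, while collapsing the cardinals of $V[\dP_\kappa]$ in each interval $(\kappa_{k-1},\kappa_k)$ down to $\kappa_{k-1}$. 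Putting this together, $\sup_k\kappa_k=\kappa^{+\omega}=\aleph_\omega$ (as $\kappa=\aleph_{n+1}$ and $n<\omega$), so $\kappa^*=(\sup_k\kappa_k)^+=\aleph_{\omega+1}$ in $V[\dP]$, as claimed.

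The step I expect to be most delicate is the preservation of cardinals $\leq\aleph_n$ by $\dP_\kappa$: one forces with the full ordering $\leq_\kappa$ while closure is available only along $\leq_{\kappa,0}$, and the iterands include the non-closed Namba posets $\dot{\dL}$ and the merely strategically closed $l(\alpha)$, so the argument must genuinely route through the Prikry property of the whole iteration rather than through naive closure of the forcing. A secondary point to check carefully is the $\kappa$-chain condition of $\dP_\kappa$ in the Prikry-type Easton-support setting.
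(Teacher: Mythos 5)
Your proposal follows essentially the same route as the paper: the Prikry property of $\dP_{\kappa}$ (Lemma \ref{PrikryIter}) together with the closure of the direct extension ordering handles cardinals $\leq\aleph_n$, the Case-2 collapses handle the interval $(\aleph_n,\kappa)$, a chain-condition argument handles cardinals $\geq\kappa$, and the tail product is analysed as an ordinary Levy collapse over $V[\dP_{\kappa}]$. You are in fact more explicit than the paper on two points it glosses over: that $l(\alpha)$ is only guaranteed to be ${<}\,\alpha$-\emph{strategically} closed, so Lemma \ref{DirectClosureLimit} and Lemma \ref{PrikryPumpUp} must be adapted to strategic closure, and the $\kappa$-cc of the Easton-support iteration at the Mahlo cardinal $\kappa$.

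One step is wrong as stated: the full-support product $\prod_{k}\Coll(\kappa_{k-1},<\kappa_k)$ does not have size $\sup_k\kappa_k$. Under $\GCH$ each factor has size $\kappa_k$, so the product has size $\prod_k\kappa_k=(\sup_k\kappa_k)^{\aleph_0}=\kappa^*$, since $\cf(\sup_k\kappa_k)=\omega$. A poset of size $\kappa^*$ is only $(\kappa^*)^+$-cc, which preserves cardinals strictly above $\kappa^*$ but does not by itself give preservation of $\kappa^*$ -- and that is precisely the cardinal whose preservation the claim is about. The repair is the standard factoring argument the paper uses: write the product as $\prod_{j\leq k}\times\prod_{j>k}$; the tail is ${<}\,\kappa_k$-closed, so every new sequence of ordinals of length ${<}\,\kappa_k$ is added by the initial factor, which has size $\kappa_k$ and is therefore $\kappa_{k+1}$-cc. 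A hypothetical collapse of $\kappa^*$ would be witnessed by a cofinal map into $\kappa^*$ from some regular $\gamma\leq\kappa_k$ of the extension, and such a map is then added by a $\kappa_{k+1}$-cc forcing, which preserves the regularity of $\kappa^*$. With this correction the rest of your argument goes through.
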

		
		\begin{proof}
			$\dP_{\kappa}$ has the Prikry property by Lemma \ref{PrikryIter}. By induction on $\alpha\leq\kappa$ it follows that $\dP_{\alpha}$ has a ${<}\,\aleph_n$-closed direct ordering (using Lemma \ref{DirectClosureLimit}) and thus preserves all cardinals up to and including $\aleph_n$ (the direct extension ordering on $\dL((\check{\alpha}^{+k},\dot{I}_k)_{k\in\omega})$ is forced to be ${<}\,\check{\aleph}_n$-closed because $\dP_{\alpha}$ forces $\check{\alpha}\geq\check{\aleph}_n$). So after forcing with $\dP_{\kappa}$, all cardinals below and including $\aleph_n$ as well as $\geq\kappa$ are preserved and every regular cardinal in $(\aleph_n,\kappa)$ is collapsed to have size $\aleph_n$.
			
			Moreover, for any $k<n$, $\dP_{\kappa}$ does not add new subsets to $\aleph_k$ by the Prikry property and the closure of the direct extension ordering. Since $\kappa_0$ is collapsed to $\aleph_n$, we obtain that $\dP_{\kappa}$ forces $\GCH$.
			
			Now clearly $\prod_{k\in\omega}\dot{\Coll}(\check{\kappa}_k,<\check{\kappa}_{k-1})$ is forced to preserve cardinals below and including $\check{\kappa}_0$ as well as above and including $\check{\kappa}^*$ (since every ${<}\,\check{\kappa}_k$-sequence is forced to have been added by a $\check{\kappa}_{k+1}$-cc forcing), so the claim follows easily.
		\end{proof}
		
		The more substantial claim is of course showing that $\dP$ forces that there are stationarily many $\gamma\in\aleph_{\omega+1}\cap\cof(\aleph_n)$ which are not approachable. Let $d\colon[\kappa^*]^2\to\omega$ be any subadditive coloring which is $(\kappa_k)_{k\in\omega}$-normal. We will show that there are stationarily many $\gamma\in\aleph_{\omega+1}\cap\cof(\aleph_n)$ which are not $d$-approachable in any extension by $\dP$.
		
		Let $G:=G_{\kappa}*H_{\kappa}$ be $\dP_{\kappa}*\prod_{k\in\omega}\dot{\Coll}(\check{\kappa}_k,<\check{\kappa}_{k-1})$-generic and let $C\in V[G]$ be club in $\kappa^{+\omega+1}$, $C=\dot{C}^{G}$. We will verify that there is a point in $C\cap\cof(\aleph_n)$ which is good but not $d$-approachable. To this end, using that $l$ is a Laver function, let $j\colon V\to M$ be a $\kappa^*$-supercompact embedding with critical point $\kappa$ such that $j(l)(\kappa)$ is a $\dP_{\kappa}$-name for $\prod_{k\in\omega}\dot{\Coll}(\check{\kappa}_k,<\check{\kappa}_{k-1})$.
		
		\begin{myclaim}\label{Claim2}
			There is a filter $G_{\kappa,j(\kappa)}*H_{j(\kappa)}$ such that $G_{\kappa}*H_{\kappa}*G_{\kappa,j(\kappa)}*H_{j(\kappa)}$ is $j(\dP)$-generic over $V$ and in $V[G_{\kappa}*H_{\kappa}*G_{\kappa,j(\kappa)}*H_{j(\kappa)}]$, $j$ lifts to
			$$j\colon V[G_{\kappa}*H_{\kappa}]\to M[G_{\kappa}*H_{\kappa}*G_{\kappa,j(\kappa)}*H_{j(\kappa)}].$$
		\end{myclaim}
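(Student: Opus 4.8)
The plan is to run the standard lifting argument for iterated forcing (see \cite[Section 9]{CummingsHandbook}), exploiting that the Laver function $l$ has been chosen so that $\dot R:=j(l)(\kappa)$ is a $\dP_\kappa$-name for the tail forcing $\prod_k\dot\Coll(\check\kappa_k,<\check\kappa_{k-1})$, i.e.\ so that $\dP=\dP_\kappa*\dot R$. First I would analyse $j(\dP_\kappa)$, which $M$ computes as the Easton-support Magidor iteration of length $j(\kappa)$ obtained from $j(l)$ by the same recipe. Since $\crit(j)=\kappa$ we have $j(\dP_\kappa)\uhr\kappa=\dP_\kappa$, and since $M$ contains $H(\kappa^{*+})$ it agrees with $V$ about $\dP_\kappa$, about $\dot R$, and about the forcing-theoretic side conditions of the construction, so stage $\kappa$ of $j(\dP_\kappa)$ is a Case~1 stage and its $\kappa$-th iterand is $\dot R*\dot\dL((\kappa^{+k},\dot I_k)_{k\geq 1})*\dot\Coll(\aleph_n,\kappa^{+\omega+1})$. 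Writing $\dP_{\kappa+1,j(\kappa)}$ for the part of the iteration past stage $\kappa$, we obtain in $M$ the factorizations
\[
j(\dP_\kappa)=\dP*\big(\dot\dL((\kappa^{+k},\dot I_k)_{k\geq 1})*\dot\Coll(\aleph_n,\kappa^{+\omega+1})*\dP_{\kappa+1,j(\kappa)}\big),\qquad j(\dP)=j(\dP_\kappa)*j(\dot R),
\]
so that $\dP=\dP_\kappa*\dot R$ is an initial segment of $j(\dP)$.

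Next I would build the required generic in three steps, each obtained by forcing over an extension of $V$, so that the total object is $j(\dP)$-generic over $V$ and, a fortiori, over $M$. Step~1: $G_\kappa*H_\kappa$ is $V$-generic, hence $M$-generic, for $\dP=\dP_\kappa*\dot R$; since every condition of $\dP_\kappa$ lies in $V_\kappa$ we have $j\uhr\dP_\kappa=\mathrm{id}$, so $j"G_\kappa=G_\kappa$. Step~2: over $V[G_\kappa*H_\kappa]$ force with the poset $\dL((\kappa^{+k},I_k)_{k\geq 1})*\Coll(\aleph_n,\kappa^{+\omega+1})*\dP_{\kappa+1,j(\kappa)}$ of $M[G_\kappa*H_\kappa]$ to obtain $G_{\kappa,j(\kappa)}$; then $\bar G:=G_\kappa*H_\kappa*G_{\kappa,j(\kappa)}$ is $j(\dP_\kappa)$-generic over $M$ and contains $j"G_\kappa$, so $j$ lifts to $j\colon V[G_\kappa]\to M[\bar G]$. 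Step~3 is the delicate one: in $M[\bar G]$ the poset $j(\dot R)^{\bar G}$ is $\Coll(\aleph_n,<j(\kappa))\times\prod_{k\geq 1}\Coll(j(\kappa_{k-1}),<j(\kappa_k))$, and we need an $M[\bar G]$-generic $H_{j(\kappa)}$ with $j"H_\kappa\subseteq H_{j(\kappa)}$.

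For Step~3 I would argue coordinatewise with a master condition. Since $|H_\kappa|\leq\sup_k\kappa_k<\kappa^*$ and ${}^{\kappa^*}M\subseteq M$, the set $j"H_\kappa$ lies in $M$, hence in $M[\bar G]$. On the coordinate $k=0$, every condition of $\Coll(\aleph_n,<\kappa)$ has size ${<}\,\aleph_n<\crit(j)$ and is therefore fixed by $j$, and $\Coll(\aleph_n,<\kappa)$ is a complete suborder of $\Coll(\aleph_n,<j(\kappa))$, so it suffices to extend the generic $H_\kappa\uhr 0$ to a $\Coll(\aleph_n,<j(\kappa))$-generic $K_0$ over $V[\bar G]$. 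On each coordinate $k\geq 1$, $j"(H_\kappa\uhr k)$ is a directed subset of $\Coll(j(\kappa_{k-1}),<j(\kappa_k))$ of size $\leq\kappa_k$; as $j(\kappa_{k-1})\geq j(\kappa)>\kappa^*>\sup_m\kappa_m\geq\kappa_k$, this collapse is ${<}\,j(\kappa_{k-1})$-directed closed, so the union of $j"(H_\kappa\uhr k)$ is a condition $\bar h_k\in M[\bar G]$ below every element of $j"(H_\kappa\uhr k)$; let $K_k$ be generic over $V[\bar G]$ below $\bar h_k$. Then $H_{j(\kappa)}:=K_0\times\prod_{k\geq 1}K_k$ is $M[\bar G]$-generic for $j(\dot R)^{\bar G}$ and contains $j"H_\kappa$, so $\bar G*H_{j(\kappa)}=G_\kappa*H_\kappa*G_{\kappa,j(\kappa)}*H_{j(\kappa)}$ is $j(\dP)$-generic over $V$ and contains $j"(G_\kappa*H_\kappa)$; by the standard lifting criterion $j$ extends, in this last extension, to $j\colon V[G_\kappa*H_\kappa]\to M[G_\kappa*H_\kappa*G_{\kappa,j(\kappa)}*H_{j(\kappa)}]$, as required.

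The main obstacle is Step~3: one must verify both that $j"H_\kappa$ is captured inside $M[\bar G]$ and that the severely unbalanced product collapse $j(\dot R)^{\bar G}$ is directed closed enough to absorb it, and both rest on the inequality $\kappa_k<j(\kappa_{k-1})$ together with the full $\kappa^*$-supercompactness of $j$. The remaining points --- that stage $\kappa$ of $j(\dP_\kappa)$ falls under Case~1, that $\dP_\kappa\in V_\kappa$ is fixed pointwise by $j$, and that forcing over extensions of $V$ yields $M$-genericity --- are routine.
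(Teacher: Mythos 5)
Your overall plan --- factor $j(\dP)$ as $\dP$ followed by the remainder of $j(\dP_\kappa)$ followed by $j(\dot R)$, obtain the middle generic by forcing over $V[G_\kappa*H_\kappa]$, lift through $\dP_\kappa$ using $j\uhr\dP_\kappa=\mathrm{id}$, and absorb $j[H_\kappa]$ into a generic for $j(\dot R)$ via a master condition --- is exactly the paper's argument. The one genuine problem is Step~3 at your coordinate $k=0$. The product $\dot R$ does not (and cannot) contain a factor $\Coll(\aleph_n,<\kappa)$: Claim~1 of the main proof asserts that this product preserves all cardinals below and including $\kappa_0$, the last section writes the same product as $\prod_{k\in\omega}\Coll(\kappa_k,<\kappa_{k+1})$, and the master-condition step needs all of $j(\dot R)$ to be ${<}\,j(\kappa)$-directed closed (the displayed formula $\prod_{k\in\omega}\Coll(\check\kappa_k,<\check\kappa_{k-1})$ is an index slip). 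More importantly, your proposed handling of that coordinate would fail even if it were present: $H_\kappa\uhr 0$ is an element of $V[G_\kappa*H_\kappa]\subseteq M[\bar G]$ and $\bigcup(H_\kappa\uhr 0)$ is a total function $F$ on $\aleph_n\times\kappa$, so the set of conditions in $\Coll(\aleph_n,<j(\kappa))$ that disagree with $F$ somewhere on $\aleph_n\times\kappa$ is dense and definable in $M[\bar G]$; hence no filter generic over $M[\bar G]$ can contain $H_\kappa\uhr 0$, and ``extending the generic'' as you propose is impossible. The lift goes through precisely because every factor of $\dot R$ has critical closure at least $\kappa$.

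Once that phantom coordinate is removed, your coordinatewise master conditions $\bar h_k=\bigcup j[H_\kappa\uhr k]$ amount to the paper's single master condition $q=\bigcup j[H_\kappa]$. One remaining cosmetic point: you should force with the whole poset $j(\dot R)^{\bar G}$ below the single condition $(\bar h_k)_{k}$ rather than choosing each $K_k$ separately, since mutual genericity of independently chosen factor generics is not automatic. The rest (Case~1 applies at stage $\kappa$ of $j(\dP_\kappa)$, $j[G_\kappa]=G_\kappa$, $j[H_\kappa]\in M[\bar G]$ by the $\kappa^*$-closure of $M$, genericity over $V$ implying genericity over $M$) matches the paper.
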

		
		\begin{proof}
			Since $j(l)(\kappa)=\prod_{k\in\omega}\dot{\Coll}(\check{\kappa}_k,<\check{\kappa}_{k-1})$ is a $\dP_{\kappa}$-name for a ${<}\,\kappa$-strategically closed (even directed-closed) poset which forces $\LIP(\check{\kappa},\dot{\kappa}^{+k})$ for every $k\geq 1$ (by Lemma \ref{OmegaLIPLemma}), $j(\dP_{\kappa})=\dP*\dot{\dP}_{\kappa,j(\kappa)}$ for some $\dot{\dP}_{\kappa,j(\kappa)}$. So we can let $G_{\kappa,j(\kappa)}$ be any $\dot{\dP}_{\kappa,j(\kappa)}^{G_{\kappa}*H_{\kappa}}$-generic filter  over $V[G_{\kappa}*H_{\kappa}]$. Since $j[G_{\kappa}]=G_{\kappa}\subseteq G_{j(\kappa)}$, the embedding lifts to $j\colon V[G_{\kappa}]\to M[G_{j(\kappa)}]$.
			
			By assumption $H_{\kappa}\in M[G_{j(\kappa)}]$. Since $H_{\kappa}$ is a filter, it is directed. Moreover, $j[H_{\kappa}]\in M[G_{j(\kappa)}]$ by the size of $H_{\kappa}$ and the closure of $M$. Furthermore, $j[H_{\kappa}]$ is a directed subset of $j(\prod_{k\in\omega}\dot{\Coll}(\check{\kappa}_k,<\check{\kappa}_{k-1}))^{G_{\kappa}*H_{\kappa}*G_{\kappa,j(\kappa)}}$. Therefore $q:=\bigcup j[H_{\kappa}]$ is a condition in the aforementioned poset, since that poset is ${<}\,j(\kappa)$-directed closed and $|j[H_{\kappa}]|=\kappa^*<j(\kappa)$. By forcing with $j(\prod_{k\in\omega}\dot{\Coll}(\check{\kappa}_k,<\check{\kappa}_{k-1}))^{G_{\kappa}*H_{\kappa}*G_{\kappa,j(\kappa)}}$ over $V[G_{\kappa}*H_{\kappa}*G_{\kappa,j(\kappa)}]$ below the condition $q$, we obtain $H_{j(\kappa)}$. It follows that $j[G_{\kappa}*H_{\kappa}]\subseteq G_{\kappa}*H_{\kappa}*G_{\kappa,j(\kappa)}*H_{j(\kappa)}$, so $j$ indeed lifts as claimed.
		\end{proof}
		
		Let $G_{j(\kappa)}:=G_{\kappa}*H_{\kappa}*G_{\kappa,j(\kappa)}$. It follows that $\rho:=\sup(j[\kappa^*])\in j(C)$. So all that is left to show is that $\rho$ is not $j(d)$-approachable and that it is good and has cofinality $\aleph_n$ in the model $M[G_{j(\kappa)}*H_{j(\kappa)}]$.
		
		\begin{myclaim}\label{Claim3}
			$\rho$ is not $j(d)$-approachable in $M[G_{j(\kappa)}*H_{j(\kappa)}]$.
		\end{myclaim}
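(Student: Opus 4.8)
The plan is to reduce non-$j(d)$-approachability of $\rho$ to a non-approachability statement for $\kappa^*$ itself over $V[G_\kappa*H_\kappa]$, and then to verify that statement by combining Theorem \ref{NoNewApproach} with Lemma \ref{ClosurePreserve}.

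\emph{The reduction.} Suppose toward a contradiction that $\rho$ is $j(d)$-approachable in $M[G_{j(\kappa)}*H_{j(\kappa)}]$. Since $j(d)$ is subadditive and $\cf(\rho)=\aleph_n>\omega$, Fact \ref{ApproachRefinement} applied to the unbounded set $j[\kappa^*]\subseteq\rho$ yields an unbounded $B'\subseteq j[\kappa^*]$ and $m\in\omega$ with $j(d)\uhr[B']^2\leq m$. Put $A:=\{\alpha<\kappa^*\;|\;j(\alpha)\in B'\}$, so $B'=j[A]$. Then $A$ is unbounded in $\kappa^*$ (if $A$ were bounded below some $\eta<\kappa^*$, then $B'=j[A]$ would be bounded below $j(\eta)<\rho=\sup j[\kappa^*]$), we have $A\in M[G_{j(\kappa)}*H_{j(\kappa)}]$ (since $j\uhr\kappa^*\in M$ by $\kappa^*$-supercompactness), and $d\uhr[A]^2\leq m$ (since $j(d)(j(\alpha),j(\beta))=j(d(\alpha,\beta))=d(\alpha,\beta)$ for $\alpha<\beta<\kappa^*$, using $j\uhr\omega=\mathrm{id}$). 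As $M[G_{j(\kappa)}*H_{j(\kappa)}]\subseteq V[G_{j(\kappa)}*H_{j(\kappa)}]$, it therefore suffices to prove that $\kappa^*$ is not $d$-approachable in $V[G_{j(\kappa)}*H_{j(\kappa)}]$.

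\emph{Tracking non-approachability through the forcing.} By the choice $j(l)(\kappa)=\prod_{k}\Coll(\kappa_{k-1},<\!\kappa_k)$ and the definition of the stage-$\kappa$ iterand of $j(\dP_\kappa)$, we have $j(\dP_\kappa)=\dP*\dot{\dP}_{\kappa,j(\kappa)}$ with $\dot{\dP}_{\kappa,j(\kappa)}$ forced to be $\dL((\kappa^{+k},\dot I_k)_{k\geq 1})*\dot\Coll(\check\aleph_n,\check\kappa^*)*\dot{\dP}^{\mathrm{tail}}$, where $\dot{\dP}^{\mathrm{tail}}$ is the tail of the Magidor iteration on $[\kappa+1,j(\kappa))$. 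Since $G_{\kappa,j(\kappa)}$ and $H_{j(\kappa)}$ were chosen generic over $V[G_\kappa*H_\kappa]$, the model $V[G_{j(\kappa)}*H_{j(\kappa)}]$ is a genuine generic extension of $V[G_\kappa*H_\kappa]$, by $\dL((\kappa^{+k},I_k)_{k\geq 1})*\dot\Coll(\check\aleph_n,\check\kappa^*)*\dot{\dP}^{\mathrm{tail}}*j\big(\prod_k\Coll(\kappa_{k-1},<\!\kappa_k)\big)$. In $V[G_\kappa*H_\kappa]$, $\kappa^*$ is a regular cardinal and $d$ is normal, so $\kappa^*$ is not $d$-approachable there, and we track this through the four factors. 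First, over $V[G_\kappa*H_\kappa]$ the hypotheses of Theorem \ref{NoNewApproach} hold with $\mu=\aleph_n$ and scale cardinals $\kappa^{+k}$ ($k\geq 1$): $\GCH$ holds, $\kappa^{+\omega}$ is a strong limit with $2^{\kappa^{+\omega}}=\kappa^*$, $\aleph_n<\kappa^{+1}$, and each $I_k$ witnesses $\LIP(\kappa,\kappa^{+k})$ (hence a fortiori $\LIP(\aleph_n,\kappa^{+k})$, by Lemma \ref{OmegaLIPLemma}); so $\dL((\kappa^{+k},I_k)_{k\geq 1})*\dot\Coll(\check\aleph_n,\check\kappa^*)$ adds no cofinal subset of $\kappa^*$ on which $d$ is bounded, and since none exists in $V[G_\kappa*H_\kappa]$, none exists after this forcing. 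Second, in the resulting model $\cf(\kappa^*)=\aleph_n$, which is not a strong limit since $n\geq 1$, while $\dot{\dP}^{\mathrm{tail}}$ is Prikry-type (Lemma \ref{PrikryIter}) with a ${<}\,\aleph_n$-closed direct extension ordering (Lemma \ref{DirectClosureLimit}); by Lemma \ref{ClosurePreserve}, $\kappa^*$ remains non-$d$-approachable. Third, $j\big(\prod_k\Coll(\kappa_{k-1},<\!\kappa_k)\big)$ is ${<}\,j(\kappa)$-closed, in particular ${<}\,\aleph_n$-closed, and trivially Prikry-type, so Lemma \ref{ClosurePreserve} applies once more. Hence $\kappa^*$ is not $d$-approachable in $V[G_{j(\kappa)}*H_{j(\kappa)}]$, contradicting the previous paragraph.

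\emph{Main obstacle.} The delicate point is the reduction: the hypothetical witness lives only in $M[G_{j(\kappa)}*H_{j(\kappa)}]$ and is measured by $j(d)$, so it must be transported to a statement about the fixed coloring $d\in V[G_\kappa*H_\kappa]$. This works because $j(d)$ agrees with $d$ along the thread $j\uhr\kappa^*$, which lies in $M$, and because $G_{\kappa,j(\kappa)}$ and $H_{j(\kappa)}$ were arranged to be generic over $V[G_\kappa*H_\kappa]$, so that $V[G_{j(\kappa)}*H_{j(\kappa)}]$ is a forcing extension of $V[G_\kappa*H_\kappa]$ to which Theorem \ref{NoNewApproach} and Lemma \ref{ClosurePreserve} genuinely apply. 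The remaining work is bookkeeping: matching the parameters ($\mu=\aleph_n$, the $\LIP$-ideals of Lemma \ref{OmegaLIPLemma}, $\GCH$ and the strong-limit and power computations at $\kappa^{+\omega}$) to the hypotheses of Theorem \ref{NoNewApproach}, and confirming that the tail iteration really does have a sufficiently closed direct extension ordering.
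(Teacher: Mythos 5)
Your overall strategy is the paper's: reduce the $j(d)$-approachability of $\rho$ to an approachability statement about $\kappa^*$ along $j\uhr\kappa^*$, then track non-approachability through the factorization $\dL*\dot\Coll(\check\aleph_n,\check\kappa^*)$ (Theorem \ref{NoNewApproach}), the tail of the Magidor iteration (Lemma \ref{ClosurePreserve}), and the final closed collapse. Your reduction — pulling a hypothetical witness back through $j^{-1}$ after refining it into $j[\kappa^*]$ via Fact \ref{ApproachRefinement} — is a correct and arguably cleaner substitute for the paper's device of transporting $j(d)$ to an induced $(\kappa_k)_{k\in\omega}$-normal coloring $e$ on $\kappa^*$ via the order isomorphism $F\colon\kappa^*\to j[\kappa^*]$; both hinge on $j\uhr\kappa^*\in M$.

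There is one point where your argument overreaches. You reduce to showing that $\kappa^*$ is not $d$-approachable in $V[G_{j(\kappa)}*H_{j(\kappa)}]$, and then run the preservation argument over $V$'s chain of extensions. But the tail $\dot{\dP}^{\mathrm{tail}}=j(\dP)_{\kappa+1,j(\kappa)}$ is an Easton-support Magidor iteration of Prikry-type forcings only from the point of view of $M$ and its extensions: its iterands at stages $\alpha\in(\kappa,j(\kappa))$ involve ideals and dense closed families whose positivity and density are computed in $M$-side models, and since $M$ is only closed under $\kappa^*$-sequences, $M[G_\alpha]$ and $V[G_\alpha]$ need not agree on $\mathcal{P}(\alpha^{+k})$ for $\alpha>\kappa^*$. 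So Lemma \ref{PrikryIter} and Lemma \ref{DirectClosureLimit} give you the Prikry property and the closure of $\leq_0$ for the tail as a forcing over $M$'s extensions (by elementarity), not over $V$'s, and Lemma \ref{ClosurePreserve} as you invoke it in the second step is not justified. The repair is immediate and brings you back to the paper's argument: your witness $A=j^{-1}[B']$ lies in $M[G_{j(\kappa)}*H_{j(\kappa)}]$ (precisely because $j\uhr\kappa^*\in M$), so it suffices to show $\kappa^*$ is not $d$-approachable in $M[G_{j(\kappa)}*H_{j(\kappa)}]$; every step of your tracking argument then goes through verbatim inside $M$'s extensions, using that the relevant generics are generic over $V$ and hence over $M$, and that $M[G_\kappa*H_\kappa]$ and $V[G_\kappa*H_\kappa]$ agree on $H(\kappa^{*+})$ so that Theorem \ref{NoNewApproach} applies to the genuine stage-$\kappa$ Namba forcing.
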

		
		\begin{proof}
			In $M[G_{\kappa}*H_{\kappa}*G_{\kappa,j(\kappa)}]$, $j[\kappa^*]$ is a cofinal subset of $\rho$ and has order-type $\kappa^*$. Moreover, for any $\alpha=j(\alpha')\in j[\kappa^*]$ and $k\in\omega$, we have
			$$\{\beta\in j[\kappa^*]\cap j(\alpha')\;|\;d(\beta,j(\alpha'))\leq k\}=j[\{\beta\in\kappa^*\cap\alpha'\;|\;d(\beta,\alpha')\leq k\}]$$
			and the latter set has size $\leq\kappa_k$ by the $(\kappa_k)_{k\in\omega}$-normality of $d$. It follows that $j(d)$ induces a $(\kappa_k)_{k\in\omega}$-normal coloring $e$ on $\kappa^*$ such that in any extension, $\rho$ is $j(d)$-approachable if and only if $\kappa^*$ is $e$-approachable: Simply let $F\colon\kappa^*\to j[\kappa^*]$ be order-preserving and let $e(\alpha,\beta):=d(F(\alpha),F(\beta))$.
			
			Moreover, $\kappa_k=\kappa^{+k}$ in $M[G_{\kappa}*H_{\kappa}*G_{\kappa,j(\kappa)}]$. Let $H_{\kappa+1}$ be the $\dot{\dL}((\dot{\kappa}^{+k},\dot{I}_k)_{k\geq 1})*\dot{\Coll}(\check{\aleph}_n,\dot{\kappa}^{+\omega+1})^{G_{\kappa}*H_{\kappa}*G_{\kappa,j(\kappa)}}$-generic filter induced by $G_{\kappa,j(\kappa)}$. Then by Theorem \ref{NoNewApproach}, $\kappa^*$ is not $e$-approachable in $M[G_{\kappa}*H_{\kappa}*H_{\kappa+1}]$ and so $\rho$ is not $j(d)$-approachable in the same model. Moreover, in $M[G_{\kappa}*H_{\kappa}*H_{\kappa+1}]$, $\cf(\rho)=\aleph_n$.
			
			We will show that the non-$j(d)$-approachability of $\rho$ is preserved when going to $M[G_{\kappa}*H_{\kappa}*G_{\kappa,j(\kappa)}]$. Let $G_{\kappa+1,j(\kappa)}$ be the $j(\dP)_{\kappa+1,j(\kappa)}$-generic induced by $G_{\kappa,j(\kappa)}$. $j(\dP)_{\kappa+1,j(\kappa)}$ is an Easton-support Magidor iteration of Prikry-type forcings of length $j(\kappa)$ and therefore has the Prikry property. By the same argument as before, the direct extension ordering on $j(\dP)_{\kappa+1,j(\kappa)}$ is ${<}\,\aleph_n$-closed. So by Lemma \ref{ClosurePreserve}, $j(\dP)_{\kappa+1,j(\kappa)}$ preserves the non-$j(d)$-approachability of $\rho$.
			
			Since $(\prod_{k\in\omega}\dot{\Coll}(\check{j(\kappa_k)},<\check{j(\kappa_{k-1})}))^{G_{j(\kappa)}}$ is ${<}\,\aleph_{n+1}$-closed, it also cannot make $\rho$ a $d$-approachable ordinal (since $d$-approachability can of course always be witnessed by sets of minimal order-type which have size $\aleph_n$). So $\rho$ is not $j(d)$-approachable in $M[G_{j(\kappa)}*H_{j(\kappa)}]$.
		\end{proof}
		
		In the proof of the preceding claim we have shown that $\cf(\rho)=\aleph_n$ in $M[G_{j(\kappa)}*H_{j(\kappa)}]$. So in the case $n>1$, the goodness of $\rho$ is clear because the $\GCH$ holds. However, in the other case we have some work to do. Thus, assume $n=1$.
		
		\begin{myclaim}\label{Claim4}
			$\rho$ is good in $M[G_{j(\kappa)}*H_{j(\kappa)}]$.
		\end{myclaim}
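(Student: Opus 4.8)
The plan is to let $\rho$ inherit goodness from $\kappa^*$ across the lifted embedding. Fix in $M[G_{\kappa}*H_{\kappa}]$ a $(\kappa^*,(\kappa^{+k})_{k\geq 1})$-scale $(f_{\alpha})_{\alpha<\kappa^*}$, write $H_{\kappa+1}=H^{\dL}*H^{\Coll}$ with $H^{\dL}$ the $\dL((\kappa^{+k},I_k)_{k\geq 1})$-generic and $H^{\Coll}$ the ensuing $\Coll(\aleph_n,\kappa^{+\omega+1})$-generic, and set $(g_{\gamma})_{\gamma<j(\kappa^*)}:=j((f_{\alpha})_{\alpha<\kappa^*})$, so that $g_{j(\alpha)}=j(f_{\alpha})$ for every $\alpha<\kappa^*$. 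By elementarity of $j\colon V[G_{\kappa}*H_{\kappa}]\to M[G_{j(\kappa)}*H_{j(\kappa)}]$, the sequence $(g_{\gamma})_{\gamma<j(\kappa^*)}$ is a $(j(\kappa^*),(j(\kappa)^{+k})_{k\geq 1})$-scale in $M[G_{j(\kappa)}*H_{j(\kappa)}]$, where $j(\kappa^*)=\aleph_{\omega+1}$; since goodness is well defined modulo clubs, it suffices to show that $\rho$ is good for this scale.

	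First I would verify that $\kappa^*$ is good for $(f_{\alpha})_{\alpha<\kappa^*}$ inside $M[G_{\kappa}*H_{\kappa}*H_{\kappa+1}]$. By Lemma \ref{ExactUpperBound}, $b:=\bigcup\{\stem(p)\;|\;p\in H^{\dL}\}$ is an exact upper bound of $(f_{\alpha})_{\alpha<\kappa^*}$ in $M[G_{\kappa}*H_{\kappa}*H^{\dL}]$; since each $I_k$ witnesses $\LIP(\kappa,\kappa^{+k})$ and hence concentrates on ordinals of cofinality $\kappa$, genericity gives $\cf(b(k))=\kappa$ for almost all $k$, whence by exactness $\cf(\kappa^*)=\kappa$ as well. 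The collapse $\Coll(\aleph_n,\kappa^{+\omega+1})$ is ${<}\,\aleph_n$-closed, hence adds no new $\omega$-sequences of ordinals; thus $\prod_{k\geq 1}\kappa^{+k}$ is unchanged, $b$ is still an exact upper bound of $(f_{\alpha})_{\alpha<\kappa^*}$ in $M[G_{\kappa}*H_{\kappa}*H_{\kappa+1}]$, and there $\cf(b(k))=\cf(\kappa^*)=\aleph_n$ for almost all $k$. Running the standard ``exact upper bound implies good'' recursion of length $\cf(\kappa^*)=\aleph_n$ now yields a cofinal $\tilde{A}\subseteq\kappa^*$ and $m\in\omega$ with $(f_{\alpha}(k))_{\alpha\in\tilde{A}}$ strictly increasing for all $k\geq m$: at stage $i$ the pointwise supremum of $\{f_{\alpha_j}\;|\;j<i\}$ is still ${<^*}\,b$ since $i<\aleph_n=\cf(b(k))$ for almost every $k$, exactness lets one pick $\alpha_i$ above all earlier $\alpha_j$ dominating that supremum, and one finally thins $\{\alpha_i\}$ to a cofinal set on which the resulting ``tail'' $m$ is constant. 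Crucially, $\tilde{A}$ lies in $M[G_{\kappa}*H_{\kappa}*H_{\kappa+1}]$.

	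Finally, $M[G_{\kappa}*H_{\kappa}*H_{\kappa+1}]\subseteq M[G_{j(\kappa)}*H_{j(\kappa)}]$, and $j\uhr\kappa^*$ — which agrees with the underlying embedding on ordinals below $\kappa^*$ and therefore lies in $M$ by $\kappa^*$-supercompactness — also lies in $M[G_{j(\kappa)}*H_{j(\kappa)}]$; hence $A:=\{j(\alpha)\;|\;\alpha\in\tilde{A}\}$ belongs to that model. As $\tilde{A}$ is cofinal in $\kappa^*$ and $j$ is order preserving, $A$ is cofinal in $\rho=\sup(j[\kappa^*])$, and for $\alpha<\alpha'$ in $\tilde{A}$ and $k\geq m$ one has $g_{j(\alpha)}(k)=j(f_{\alpha}(k))<j(f_{\alpha'}(k))=g_{j(\alpha')}(k)$. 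So $A$ witnesses that $\rho$ is good for $(g_{\gamma})_{\gamma<j(\kappa^*)}$, i.e.\ $\rho$ is good; together with $\cf(\rho)=\aleph_n$ established in the proof of Claim \ref{Claim3}, this completes the proof of Theorem \ref{MainThm}.

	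The main obstacle is precisely the bookkeeping of the last two paragraphs. The witness $\tilde{A}$ for the goodness of $\kappa^*$ is added by the Namba forcing and so is not an element of the domain $V[G_{\kappa}*H_{\kappa}]$ of $j$, so $j$ cannot be applied to it directly; the point is that the restriction $j\uhr\kappa^*$ nonetheless survives into the final model, so the pointwise image $j[\tilde{A}]$ is available there and is the needed witness for $\rho$. One must also take care that $b$ remains an exact upper bound across $\Coll(\aleph_n,\kappa^{+\omega+1})$, which is exactly where the ${<}\,\aleph_n$-closure of that collapse is used.
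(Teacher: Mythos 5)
Your argument is correct and reaches the goal by a slightly different route than the paper. The paper forms the pointwise image $b^j(k):=j(b(k))$ of the Namba generic, argues that $b^j$ is an exact upper bound of the shifted scale $(f^j_{\alpha})_{\alpha<\rho}$ with $\cf(b^j(k))=\aleph_1=\cf(\rho)$ in $M[G_{\kappa}*H_{\kappa}*H_{\kappa+1}]$, and reads off a goodness witness for $\rho$ there; you instead establish goodness of $\kappa^*$ itself for $(f_{\alpha})_{\alpha<\kappa^*}$ in that same intermediate model and then push the witnessing set forward with $j\uhr\kappa^*$ (correctly observing that $\tilde{A}$ is not in $\dom(j)$, so only the pointwise image is available, and that $j\uhr\kappa^*\in M$ by $\kappa^*$-supercompactness). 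Both proofs rest on the same two pillars — Lemma \ref{ExactUpperBound} and the fact that the $I_k$ concentrate on points of high cofinality — and both note that the witness persists upward. Your version has the small advantage of sidestepping the claim that the $j$-image of an exact upper bound is again an exact upper bound of the shifted scale, which is the one genuinely delicate step in the paper's write-up; transferring the witness set instead of the bound is completely elementary.

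Two slips in your second paragraph, neither of which damages the argument. First, $I_k$ concentrates on ordinals of cofinality $\kappa_{k-1}=\kappa^{+(k-1)}$, not of cofinality $\kappa$ (Lemma \ref{LIPLemma} gives concentration on cofinality $\nu$ for an ideal on $\nu^+$). Second, the inference ``whence by exactness $\cf(\kappa^*)=\kappa$'' is false: with $\cf(b(k))=\kappa_{k-1}$ the exact upper bound analysis gives $\tcf(\prod_k\cf(b(k)))=\kappa^*$, i.e.\ $\kappa^*$ is still regular after the Namba forcing alone. What you actually need downstream is only that each $\cf(b(k))$ is uncountable before the collapse (true either way, and preserved since $\dL$ adds no new $\omega$-sequences into any $\kappa_k$ by Lemma \ref{NoNewFunctions}) and that $\cf(\kappa^*)=\aleph_1$ after the collapse, which follows from Lemma \ref{KappaStarNotToMu} together with $|\kappa^*|=\aleph_1$ rather than from any exactness computation. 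With those two corrections the proof goes through as written.
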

		
		\begin{proof}
			Let $H_{\kappa+1}:=H_{\kappa+1}^{\dL}*H_{\kappa+1}^{\dC}$. Let $b$ be the generic branch of $H_{\kappa+1}^{\dL}$. By Lemma \ref{ExactUpperBound}, $b$ is an exact upper bound of $(f_{\alpha})_{\alpha<\kappa^*}$ whenever $(f_{\alpha})_{\alpha<\kappa^*}$ is a $(\kappa^*,(\kappa_n)_{n\in\omega})$-scale.
			
			Let $b^j$ be defined by $b^j(k):=j(b(k))$ and let $j((f_{\alpha})_{\alpha<\kappa^*})=:(f^j_{\alpha})_{\alpha<j(\kappa^*)}$. It is clear that $b^j$ is an exact upper bound of $(j(f_{\alpha}))_{\alpha<\kappa^*}$ and thus of $(f^j_{\alpha})_{\alpha<\rho}$ (since $(j(f_{\alpha}))_{\alpha<\kappa^*}$ is cofinal in $(f^j_{\alpha})_{\alpha<\rho}$). Moreover, since each $I_k$ concentrates on points of cofinality $\kappa_{k-1}$ by Lemma \ref{LIPLemma}, $\cf(b^j(k))=\kappa_{k-1}$ for almost all $k\in\omega$ in $M[G_{\kappa}*H_{\kappa}]$ and therefore $\cf(b^j(k))=\aleph_1$ for almost all $k\in\omega$ in $M[G_{\kappa}*H_{\kappa}*H_{\kappa+1}]$ (since all $\kappa_{k-1}$ are collapsed to $\aleph_1$). Hence (since $\cf(\rho)=\aleph_1$ in that model) $\rho$ is a good point of $(f^j_{\alpha})_{\alpha<\rho}$ in $M[G_{\kappa}*H_{\kappa}*H_{\kappa+1}]$, so there exists an unbounded $A\subseteq\rho$ and $l\in\omega$ such that for every $k\geq l$, $(f_{\alpha}(l))_{\alpha\in A}$ is increasing. This of course remains the case in any further forcing extension. Ergo $\rho$ is a good point in $M[G_{j(\kappa)}*H_{j(\kappa)}]$.
		\end{proof}
		
		In summary, $\rho\in j(C)\cap\cof(\aleph_n)$ and $\rho$ is good but not $j(d)$-approachable. By elementarity, in $V[G]$, there exists $\gamma\in C$ with cofinality $\aleph_n$ such that $\gamma$ is good but not $d$-approachable. This finishes the proof.
		
	\end{proof}
	
	\section{Failure of Approachability without a Bad Scale}
	
	We close by sketching the consistency proof of a model where there is a good scale and the approachability property fails at $\aleph_{\omega+1}$. We use a poset adding a scale with club-many good points. This material can be found in a recent article of the second author and Mildenberger (see \cite[Section 2.3]{LevineMildenbergerGoodScaleNonCompactSquares}).
	
	\begin{mydef}
		Let $\vec{\lambda}=(\lambda_n)_{n\in\omega}$ be an increasing sequence of regular cardinals. Let $\lambda^*:=(\sup_n\lambda_n)^+$. Let $\dG(\vec{\lambda})$ be the poset such that conditions are $(f_{\beta})_{\beta\leq\alpha}$ (for $\alpha<\lambda^*$) such that
		\begin{enumerate}
			\item for every $\beta\leq\alpha$, $f_{\beta}\in\prod_n\lambda_n$,
			\item for every $\beta<\gamma\leq\alpha$, $f_{\beta}<^*f_{\gamma}$,
			\item for every $\beta\leq\alpha$, if $\cf(\beta)>\omega$, then $\beta$ is good for $(f_{\gamma})_{\gamma<\beta}$.
		\end{enumerate}
		ordered by end-extension.
	\end{mydef}
	
	The poset $\dG(\vec{\lambda})$ is ${<}\,\lambda^*$-strategically closed (hence it is ${<}\,\lambda^*$-distributive) and ${<}\,\omega_1$-directed closed. Ergo, if $(\lambda^*)^{<\lambda^*}=\lambda^*$ (which is the case under $\GCH$), it preserves all cofinalities.
	
	We now show that a slight modification of the proof of Theorem \ref{MainThm} yields the following:
	
	\begin{mysen}
		Let $(\kappa_k)_{k\in\omega}$ be an increasing sequence of supercompact cardinals. Let $n\in\omega, n\geq 1$. There is a forcing extension in which $\GCH$ holds, $\kappa_0=\aleph_{n+1}$, $(\sup_k\kappa_k)^+=\aleph_{\omega+1}$, there is a good scale on $(\aleph_k)_{k\geq n+1}$ and $\aleph_{\omega+1}\cap\cof(\aleph_n)\notin I[\aleph_{\omega+1}]$.
	\end{mysen}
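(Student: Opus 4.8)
The plan is to run, essentially verbatim, the forcing $\dP$ from the proof of Theorem \ref{MainThm}, and then to force on top of it with the poset $\dG((\check\kappa_k)_{k\in\omega})$ in order to add a good scale; the work is to check that this extra forcing is harmless for the failure of approachability. (As in Theorem \ref{MainThm} we may assume $\GCH$ holds in the ground model; if necessary, force it first by a sufficiently closed poset preserving the supercompactness of each $\kappa_k$.) First I would record that, for $n\geq 1$, the conclusion of Theorem \ref{MainThm} is exactly what we want on the approachability side: by Fact \ref{ColoringApproach} the ideal $I[\aleph_{\omega+1}]$ is generated modulo nonstationary sets by $S(d)\cup S^{\aleph_{\omega+1}}_{\omega}$ for any $(\kappa_k)_{k\in\omega}$-normal subadditive coloring $d$, and since $\aleph_{\omega+1}\cap\cof(\aleph_n)$ is disjoint from $S^{\aleph_{\omega+1}}_{\omega}$, the statement ``$\aleph_{\omega+1}\cap\cof(\aleph_n)\notin I[\aleph_{\omega+1}]$'' says precisely that the set of non-$d$-approachable points of cofinality $\aleph_n$ is stationary, which is what the proof of Theorem \ref{MainThm} produces (for every such $d$) in its final model $V[G]$.

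Next I would work in $V[G]$, where $(\kappa_k)_{k\in\omega}=(\aleph_{n+1},\aleph_{n+2},\dots)$ and $\kappa^{*}=\aleph_{\omega+1}$, and force with $\dG((\check\kappa_k)_{k\in\omega})$, which has length $\kappa^{*}$. As recorded after its definition this poset is ${<}\,\kappa^{*}$-strategically closed, hence ${<}\,\kappa^{*}$-distributive; since $\GCH$ gives $(\kappa^{*})^{<\kappa^{*}}=\kappa^{*}$ it preserves cofinalities (hence all cardinals), and a routine nice-name count shows it preserves $\GCH$. If $g$ is generic, the generic sequence $\vec f:=\bigcup g$ has length $\kappa^{*}$ (by density, using the strategic closure), is $<^{*}$-increasing, and is cofinal in $\prod_k\kappa_k$ (extend any condition by a function dominating a given element of $\prod_k\kappa_k$, which is unchanged by distributivity), so it is a $(\kappa^{*},(\kappa_k)_{k\in\omega})$-scale; and by clause (3) of the definition of $\dG$, every $\beta<\kappa^{*}$ of uncountable cofinality — in particular every point of cofinality $\aleph_n$ — is good for $\vec f$. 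Thus $V[G][g]$ carries a good scale on $(\aleph_k)_{k\geq n+1}$, and $\GCH$ still holds there.

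It remains to see that $\aleph_{\omega+1}\cap\cof(\aleph_n)\notin I[\aleph_{\omega+1}]$ survives. Fix a $(\kappa_k)_{k\in\omega}$-normal subadditive coloring $d\in V[G]$; it remains normal and subadditive in $V[G][g]$, because $\dG((\kappa_k)_{k\in\omega})$ adds no new bounded subsets of $\kappa^{*}$. By the first paragraph and Fact \ref{ColoringApproach} it therefore suffices to show that the set $T$ of points of $\aleph_{\omega+1}\cap\cof(\aleph_n)$ which are not $d$-approachable is stationary in $V[G][g]$. Since the forcing is ${<}\,\kappa^{*}$-distributive it adds no new subset of any $\gamma<\kappa^{*}$; in view of Fact \ref{ApproachRefinement} this means that for $\gamma$ of cofinality $\aleph_n$ neither $\cf(\gamma)$ nor the $d$-approachability of $\gamma$ changes, so $T$ is literally the same set in $V[G]$ and in $V[G][g]$, and it is stationary in $V[G]$ by Theorem \ref{MainThm}. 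Finally, a ${<}\,\kappa^{*}$-strategically closed forcing preserves stationary subsets of the regular cardinal $\kappa^{*}$ (given a name for a club, build a decreasing sequence of conditions forcing it to contain cofinally many ordinals, using the winning strategy to obtain lower bounds at limits, and diagonalize into a point of $T$); hence $T$ is stationary in $V[G][g]$, which finishes the proof.

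The main obstacle is precisely this preservation step: one has to rule out both that adding the good scale makes some previously non-$d$-approachable point of cofinality $\aleph_n$ approachable, and that it destroys the stationarity of the set of such points. The first follows from the distributivity of $\dG$ and the second from its strategic closure (via preservation of stationary subsets of $\kappa^{*}$), so it is exactly here that the ${<}\,\kappa^{*}$-strategic-closure property of $\dG(\vec\lambda)$ is used; it is also the reason why $\dG$ is applied only after a model of Theorem \ref{MainThm} has been produced, rather than being woven into that construction.
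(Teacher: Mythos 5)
Your argument is correct, but it follows a genuinely different route from the paper's. The paper does not force $\dG((\kappa_k)_{k\in\omega})$ on top of a model of Theorem \ref{MainThm}; it folds the scale-adding poset into the construction, setting $\dP=\dP_{\kappa}*\prod_{k}\dot{\Coll}(\check{\kappa}_k,<\check{\kappa}_{k+1})*\dot{\dG}((\check{\kappa}_k)_{k\in\omega})$ and arranging the Laver function so that $j(l)(\kappa)$ names the entire tail including $\dG$. The lifting argument must then produce a master condition for $j(\dG)$, namely $\bigl(\bigcup j[H_{\kappa}^{\dG}]\bigr){}^{\frown}f_{\rho}$, and the verification that this is a legal condition is exactly where the goodness of $\rho=\sup j[\kappa^*]$ (Claim 4 in the proof of Theorem \ref{MainThm}) is used; the non-approachability of $\rho\in j(C)$ is then checked directly in the final model, so no stationary-set preservation is ever needed. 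Your version instead uses Theorem \ref{MainThm} as a black box and reduces everything to two preservation facts about $\dG$ over $V[G]$: distributivity fixes the set $T$ of non-$d$-approachable points of cofinality $\aleph_n$ pointwise, and strategic closure preserves its stationarity. This is more modular and avoids re-running the lifting argument, but it shifts the weight onto the stationarity-preservation step, where you should be explicit about one point: the standard argument builds a single play of length $\kappa^*$, interleaving decisions of elements of the club name with moves of Player II, and reads off Player II's response at a limit stage landing in $T\cap E$ for a suitable club $E$; this requires one coherent winning strategy surviving every limit stage below $\kappa^*$ (the game of length $\kappa^*$), not merely a strategy for each game of length $\gamma<\kappa^*$ separately. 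That stronger form does hold for $\dG$ (the uniform ``append an exact upper bound of the correct cofinalities'' strategy works through all limits), so your argument goes through. Two small points: take the coloring $d$ from $V$ rather than from $V[G]$, since that is the form in which Theorem \ref{MainThm} is proved and, by Fact \ref{ColoringApproach}, a single normal subadditive coloring suffices; and note that your route never needs the goodness of the reflection point at all, whereas in the paper that goodness is indispensable for the master condition.
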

	
	\begin{proof}
		We let $\dP_{\kappa}$ be as in the proof of Theorem \ref{MainThm} and we define
		$$\dP:=\dP_{\kappa}*\prod_{k\in\omega}\dot{\Coll}(\check{\kappa}_k,<\check{\kappa}_{k+1})*\dot{\dG}((\check{\kappa}_k)_{k\in\omega})$$
		
		Let $G$ be $\dP$-generic, write $G=G_{\kappa}*H_{\kappa}^{\dC}*H_{\kappa}^{\dG}$. We will verify that $V[G]$ works.
		
		Since $\GCH$ holds in $V[G_{\kappa}*H_{\kappa}]$, it continues to hold in $V[G]$ by the strategic closure and size of $\dot{\dG}((\kappa_k)_{k\in\omega})^{G_{\kappa}*H_{\kappa}}$. Moreover, no cardinals are collapsed by the latter forcing and there clearly is a good scale on $(\kappa_k)_{k\in\omega}=(\aleph_{n+k+1})_{k\in\omega}$.
		
		So the only thing that remains to be shown is that there are stationarily many non-approachable points of cofinality $\aleph_n$. To this end, fix a $(\kappa_k)_{k\in\omega}$-normal coloring on $\kappa^*$ in $V$ and let $C\in V[G]$ be club in $\kappa^*$. Let $j\colon V\to M$ be a $\kappa^*$-supercompact embedding such that $j(l)(\kappa)$ is a $\dP_{\kappa}$-name for $\prod_{k\in\omega}\dot{\Coll}(\check{\kappa}_k,<\check{\kappa}_{k+1})*\dot{\dG}((\check{\kappa}_k)_{k\in\omega})$ and define $\rho:=\sup(j[\kappa^*])$.
		
		\setcounter{myclaim}{0}
		
		\begin{myclaim}
			There is a filter $G_{\kappa,j(\kappa)}*H_{j(\kappa)}^{\dC}*H_{j(\kappa)}^{\dG}$ such that $G^j:=G*G_{\kappa,j(\kappa)}*H_{j(\kappa)}^{\dC}*H_{j(\kappa)}^{\dG}$ is $j(\dP)$-generic over $V$ and the embedding $j$ lifts in $V[G^j]$ to $j\colon V[G]\to M[G^j]$.
		\end{myclaim}
		
		\begin{proof}
			By Claim \ref{Claim2} in the proof of Theorem \ref{MainThm} we know that we can find $G_{\kappa,j(\kappa)}*H_{j(\kappa)}^{\dC}$ such that $j$ lifts to
			$$j\colon V[G_{\kappa}*H_{\kappa}^{\dC}]\to M[G*G_{\kappa,j(\kappa)}*H_{j(\kappa)}^{\dC}]$$
			since in $M[G]$, $j(l)(\kappa)$ is a ${<}\,\kappa$-strategically closed poset which forces $\LIP(\check{\kappa},\check{\kappa}^{+k})$ for every $k\geq 1$ (the product of the collapses forces $\LIP$ and $\dot{\dG}$ is forced to preserve it by its distributivity). So we only need to construct $H_{j(\kappa)}^{\dG}$. Define $q':=\bigcup j[H_{\kappa}^{\dG}]$. Then $q'=(f_{\alpha})_{\alpha<\rho}$ is almost a condition in $\dot{\dG}((j(\kappa_k))_{k\in\omega})^{G*G_{\kappa,j(\kappa)}*H_{j(\kappa)}^{\dC}}$ save for the fact that it is not of successor length. So we have to find a suitable $f_{\rho}$.
			
			By Claim \ref{Claim4} in the proof of Theorem \ref{MainThm}, $\rho$ is a good point of $(f_{\alpha})_{\alpha<\rho}$. Ergo there exists an exact upper bound $f_{\rho}$ of $(f_{\alpha})_{\alpha<\rho}$ such that $\cf(f_{\rho}(k))=\cf(\rho)$ for almost all $k\in\omega$. Then we can just let $q:=(q')^{\frown}f_{\rho}$ and see that indeed $q\in\dot{\dG}((j(\kappa_k))_{k\in\omega})^{G*G_{\kappa,j(\kappa)}*H_{j(\kappa)}^{\dC}}$. Now we can just let $H_{j(\kappa)}^{\dG}$ be any generic filter containing $q$. Since $j[G]\subseteq G*G_{\kappa,j(\kappa)}*H_{j(\kappa)}^{\dC}*H_{j(\kappa)}^{\dG}$ by construction, $j$ lifts as required.
		\end{proof}
		
		It follows that $\rho\in j(C)$. So we are done after showing:
		
		\begin{myclaim}
			$\rho$ is not $j(d)$-approachable in $M[G^j]$.
		\end{myclaim}
		
		\begin{proof}
			By Claim \ref{Claim3} in the proof of Theorem \ref{MainThm}, $\rho$ is not $j(d)$-approachable in $M[G*G_{\kappa,j(\kappa)}*H_{j(\kappa)}^{\dC}]$ and has cofinality $\aleph_n$ there. Since $H_{j(\kappa)}^{\dG}$ is generic for a ${<}\,j(\kappa^*)$-strategically closed forcing, $j(\kappa^*)=\aleph_{\omega+1}$ in $M[G*G_{\kappa,j(\kappa)}*H_{j(\kappa)}^{\dC}]$ and the approachability of $\rho$ would always be witnessed by a set of order-type $\aleph_n$, $\rho$ remains non-$j(d)$-approachable in $M[G^j]$.
		\end{proof}
		
		Moreover, $\cf(\rho)=\aleph_n$ in $M[G^j]$ as well by the strategic closure. By elementarity, in $V[G]$, there is $\gamma\in C$ with cofinality $\aleph_n$ such that $\gamma$ is not $d$-approachable. This finishes the proof.
		
	\end{proof}
	
	\printbibliography
\end{document}